\renewcommand{\eqref}[1]{\hyperref[#1]{(\ref{#1})}}
\newlist{enumlist}{enumerate}{2}
\setlist[enumlist,1]{labelindent=0cm,label=\arabic*.,ref=\arabic*,labelwidth=2.5ex,labelsep=0.5ex,leftmargin=3ex,align=left,topsep=0.5ex,itemsep=1ex,parsep=1ex}
\setlist[enumlist,2]{labelindent=0cm,label=\theenumlisti.\arabic*.,ref=\arabic*,labelwidth=5ex,labelsep=0.5ex,leftmargin=5.5ex,align=left,topsep=0.5ex,itemsep=1ex,parsep=1ex}
\newlist{itemlist}{itemize}{1}
\setlist[itemlist]{labelindent=0cm,label=$\bullet$,labelwidth=2.5ex,labelsep=0.5ex,leftmargin=3ex,align=left,topsep=0.5ex,itemsep=1ex,parsep=1ex}
\numberwithin{equation}{section}
\theoremstyle{definition}\newtheorem{definition}{Definition}[section]
\newtheorem*{definition*}{Definition}
\newtheorem*{example*}{Example}
\newtheorem*{examples*}{Examples}}
\newtheorem{proposition}[definition]{Proposition}
\newtheorem{lemma}[definition]{Lemma}
\newtheorem{theorem}[definition]{Theorem}
\newtheorem{letterthm}{Theorem}
\theoremstyle{definition}}
\newcommand{\C}{\mathbb{C}}
\newcommand{\al}{\alpha}
\newcommand{\be}{\beta}
\newcommand{\recht}{\rightarrow}
\newcommand{\Z}{\mathbb{Z}}
\newcommand{\vphi}{\varphi}
\newcommand{\id}{\mathord{\text{\rm id}}}
\newcommand{\N}{\mathbb{N}}
\newcommand{\Om}{\Omega}
\newcommand{\R}{\mathbb{R}}
\newcommand{\F}{\mathbb{F}}
\newcommand{\actson}{\curvearrowright}
\newcommand{\cU}{\mathcal{U}}
\newcommand{\Ker}{\operatorname{Ker}}
\newcommand{\lspan}{\operatorname{span}}
\newcommand{\cR}{\mathcal{R}}
\newcommand{\SL}{\operatorname{SL}}
\newcommand{\cRbar}{\overline{\cR}}
\title{KMS spectra for group actions on compact spaces}
\author{ by Johannes Christensen\thanks{ KU Leuven, Department of Mathematics, Leuven (Belgium), johannes.christensen@kuleuven.be \newline Supported by a DFF-International Postdoctoral Grant.} \ and Stefaan Vaes\thanks{KU Leuven, Department of Mathematics, Leuven (Belgium), stefaan.vaes@kuleuven.be \newline
Supported by FWO research project G090420N of the Research Foundation Flanders and by long term structural
funding – Methusalem grant of the Flemish Government.}}
\date{\today}
\begin{document}

\maketitle

\begin{abstract}
\noindent Given a topologically free action of a countable group $G$ on a compact metric space $X$, there is a canonical correspondence between continuous $1$-cocycles for this group action and diagonal $1$-parameter groups of automorphisms of the reduced crossed product C$^*$-algebra. The KMS spectrum is defined as the set of inverse temperatures for which there exists a KMS state. We prove that the possible KMS spectra depend heavily on the nature of the acting group $G$. For groups of subexponential growth, we prove that the only possible KMS spectra are $\{0\}$, $[0,+\infty)$, $(-\infty,0]$ and $\R$. For certain wreath product groups, which are amenable and of exponential growth, we prove that any closed subset of $\R$ containing zero arises as KMS spectrum. Finally, for certain nonamenable groups including the free group with infinitely many generators, we prove that any closed subset may arise. Besides uncovering a surprising relation between geometric group theoretic properties and KMS spectra, our results provide two simple C$^*$-algebras with the following universality property: any closed subset (containing, resp. not containing zero) arises as the KMS spectrum of a $1$-parameter group of automorphisms of this C$^*$-algebra.
\end{abstract}

\section{Introduction}
The crossed product construction for groups acting by homeomorphisms on compact spaces has nurtured a mutually beneficial interplay between dynamical systems and operator algebras, which has made crossed products one of the cornerstones in the theory of C$^*$-algebras. For a countable discrete group $G$ it is a well established fact that amenability of the group is connected to the existence of certain states on the crossed product: the group $G$ is amenable if and only if all crossed products of $G$ admit a tracial state. This connection hinges on the fact that both amenability of the acting group $G$ and the existence of tracial states for the crossed product of an action of $G$ on a compact space $X$ are closely connected to the existence of a $G$-invariant probability measure on $X$. One way to formulate this, is that amenability of the group $G$ is completely reflected in the possible behavior of the KMS spectra of the trivial $1$-parameter group on the crossed products by $G$. In this article, we reveal a similar but much subtler phenomenon for non-trivial $1$-parameter groups. We
prove that the possible KMS spectra depend as follows on the acting group $G$: when $G$ has
polynomial growth, only the subsets $\{0\}$, $[0,+\infty)$, $(-\infty,0]$ and $\R$ arise as KMS spectrum; for general amenable groups, all closed subsets of $\R$ containing zero can arise and are concretely realized for certain wreath product groups;
while for arbitrary countable groups, any closed subset of $\R$ may appear and is concretely realized for the free group with infinitely many generators.

KMS states on C$^*$-algebras have been widely studied. First, for natural families of C$^*$-algebras and $1$-parameter groups of
automorphisms, the KMS spectrum has been determined and for each admissible inverse temperature, the simplex of KMS states computed, often exhibiting remarkable phase transition phenomena when the temperature increases. See e.g. \cite{OP, aHLRS, T1, KR, ALN20}. Second, the question which sets may arise as KMS
spectrum has been addressed early on, in the seminal paper \cite{BEH}, and it has recently seen great progress in the case of simple AF-algebras \cite{T3}. In this article, we focus on crossed product C$^*$-algebras given by group actions on compact spaces. The study of KMS states for C$^{*}$-algebras arising from invertible dynamical systems has only recently come into focus in \cite{CT}, where it was proven that KMS spectra are rigid for diagonal actions of $\Z$.

To state our results, we need some more terminology and notation. When $A = C(X) \rtimes_r G$ is the reduced crossed product of a countable group $G$ acting on a
compact metric space $X$, we specifically consider the diagonal $1$-parameter groups $\alpha$, i.e.
acting as the identity on $C(X)$. Every continuous $1$-cocycle $\Omega : G \times X \to \R$ canonically
leads to such a $1$-parameter group given by $\alpha^{\Omega}_{t}(U_{g}) = U_{g}e^{-it\Omega(g, \cdot)}$ for $t\in \R$ and $g\in G$, where $\{U_{g}\}_{g\in G}$ are the unitaries in $C(X)\rtimes_{r} G$ implementing the action of $G$ on $X$. If the action $G \curvearrowright X$ is topologically
free, all diagonal $1$-parameter groups of $A$ arise in this way. Similar to the tracial state case, the existence of a $\beta$-KMS state for such a $1$-parameter group is equivalent to the existence of a $\beta$-conformal probability measure on $X$, i.e.\ a probability measure $\mu$ on $X$ such that
$G \curvearrowright (X,\mu)$ is nonsingular with Radon-Nikodym derivative $\mathrm{d} g^{-1}\mu / \mathrm{d} \mu = e^{\beta \cdot \Omega(g, \cdot)}$ for $g\in G$. In this article we study the possible behaviors of the \emph{KMS spectra} arising from such $1$-cocycles $\Omega$, namely the subsets of $\R$ defined as
\begin{align*}
&\{ \beta \in \R \; | \: \text{ there exists a $\beta$-KMS state for $\alpha^{\Omega}$ on $C(X)\rtimes_{r} G$} \} \\
=& \{\beta \in \R \; | \: \text{ there exists a $\beta$-conformal probability measure on $X$}\} \; .
\end{align*}
We prove that the range of possible KMS spectra depends in a surprising way on the structure of the
group $G$. First we exploit the connection to ergodic theory to prove that for groups of subexponential growth these KMS spectra are extremely rigid.

\begin{letterthm}\label{theorem.A}
Let $G$ be a countable group such that every finitely generated subgroup of $G$ has subexponential growth. Let $G \actson X$ be an action by homeomorphisms of a compact space $X$. Let $\Om : G \times X \recht \R$ be any continuous $1$-cocycle. Then the KMS spectrum must have one of the following forms: $\{0\}$, $[0,+\infty)$, $(-\infty,0]$ or $\R$.
\end{letterthm}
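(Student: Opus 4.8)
The plan is to reformulate the existence of a $\beta$-conformal measure as the existence of an invariant probability measure for a family of positive operators on $C(X)$, to prove an ``averaging lemma'' producing such measures from subexponentially growing weighted ball-sums, and then to show — using the growth hypothesis — that, up to the sign of $\beta$, the hypothesis of this lemma does not depend on $\beta$ at all; this rigidity forces the four admissible shapes. First I would reduce to $G$ finitely generated: restricting a $\beta$-conformal measure to a subgroup $H\le G$ gives a $\beta$-conformal measure for $H\actson X$ with the restricted cocycle, and conversely, if $G=\bigcup_n H_n$ is an exhaustion by finitely generated subgroups and each $H_n$ admits a $\beta$-conformal measure $\mu_n$, then any weak$^*$ limit point of $(\mu_n)$ is $\beta$-conformal for $G$ (test the defining identity against a fixed $g\in G$ and $f\in C(X)$, using that $g$ lies in $H_n$ for large $n$). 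Hence the KMS spectrum of $G$ is the intersection of the KMS spectra of its finitely generated subgroups, and since $\{0\}$, $[0,+\infty)$, $(-\infty,0]$ and $\R$ are closed under intersections, it suffices to treat finitely generated $G$; fix a symmetric generating set $S$, word length $|\cdot|$ and balls $B_n=\{g:|g|\le n\}$, so the hypothesis reads $\log|B_n|=o(n)$ (the case $G$ finite being elementary).

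For $\beta\in\R$ and $g\in G$ put $(L^\beta_g f)(x)=e^{\beta\Om(g,x)}f(gx)$ (with the cocycle convention of the paper, taking $g^{-1}$ if needed). The cocycle identity gives $L^\beta_gL^\beta_h=L^\beta_{hg}$ and $L^\beta_g\mathbf 1=e^{\beta\Om(g,\cdot)}$, and a short computation shows that a probability measure $\mu$ is $\beta$-conformal if and only if $\mu\circ L^\beta_g=\mu$ for all $g\in G$, equivalently for all $g\in S$. The \emph{averaging lemma} is: if there is a probability measure $\nu$ on $X$ for which $u_n:=\nu\big(\sum_{g\in B_n}e^{\beta\Om(g,\cdot)}\big)$ satisfies $\limsup_n u_n^{1/n}\le 1$, then a $\beta$-conformal measure exists. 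Indeed $(u_n)$ is nondecreasing with $u_n\ge 1$, hence $u_n^{1/n}\to 1$, and an elementary argument yields a subsequence along which $(u_{n+1}-u_{n-1})/u_n\to 0$. Setting $\omega_n:=u_n^{-1}\,\nu\circ\big(\sum_{g\in B_n}L^\beta_g\big)$, a state on $C(X)$, the inclusion $B_ns\,\triangle\,B_n\subseteq B_{n+1}\setminus B_{n-1}$ for $s\in S$ gives $|\omega_n(L^\beta_s f)-\omega_n(f)|\le\|f\|_\infty\,(u_{n+1}-u_{n-1})/u_n$, so any weak$^*$ limit $\omega$ of $(\omega_{n_k})$ along that subsequence satisfies $\omega\circ L^\beta_s=\omega$ for all $s\in S$ and is therefore $\beta$-conformal. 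Applying this with $\beta=0$ (so $u_n=|B_n|$) shows $0$ always lies in the spectrum.

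Now suppose the spectrum contains some $\beta_0>0$, witnessed by a $\beta_0$-conformal $\mu_0$. From $\mu_0(e^{\beta_0\Om(g,\cdot)})=1$ and Chebyshev, $\mu_0\big(\{x:\Om(g,x)>\lambda\}\big)\le e^{-\beta_0\lambda}$ for all $g,\lambda$; with $\lambda_n=\tfrac{2}{\beta_0}\log|B_n|$ and a union bound over $g\in B_n$ this gives $\mu_0\big(\{x:\ \exists\,g\in B_n,\ \Om(g,x)>\lambda_n\}\big)\le|B_n|^{-1}$. Since $|B_n|\ge n+1$, the series $\sum_k|B_{2^k}|^{-1}$ converges, so Borel--Cantelli yields, for $\mu_0$-a.e.\ $x$, that $\max_{g\in B_{2^k}}\Om(g,x)\le\tfrac{2}{\beta_0}\log|B_{2^k}|$ for all large $k$; monotonicity of $n\mapsto\max_{g\in B_n}\Om(g,x)$ together with $\log|B_n|=o(n)$ then gives
\[
v(x):=\limsup_{n\to\infty}\frac1n\max_{g\in B_n}\Om(g,x)=0\qquad\text{for }\mu_0\text{-a.e. }x .
\]
Fix any such point $x_0$. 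Then for every $\beta\ge 0$, $u_n:=\sum_{g\in B_n}e^{\beta\Om(g,x_0)}\le|B_n|\,e^{\beta\max_{g\in B_n}\Om(g,x_0)}$ grows subexponentially (by $v(x_0)=0$ and $\log|B_n|=o(n)$) and $u_n\ge 1$, so the averaging lemma with $\nu=\delta_{x_0}$ produces a $\beta$-conformal measure for \emph{every} $\beta\ge 0$. Hence $[0,+\infty)$ lies in the spectrum as soon as the spectrum meets $(0,+\infty)$.

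Applying the previous paragraph to the cocycle $-\Om$ shows that the spectrum contains $(-\infty,0]$ as soon as it meets $(-\infty,0)$; combined with $0$ always lying in the spectrum, this forces the spectrum to be $\{0\}$, $[0,+\infty)$, $(-\infty,0]$ or $\R$. The step I expect to be the crux is the drift argument: one has to pass from a single conformal measure at \emph{one} positive inverse temperature to a point at which the cocycle has vanishing upper linear growth rate along balls, and then feed that point back into the averaging lemma. This extraction is exactly where the hypothesis is indispensable: Chebyshev/Borel--Cantelli only controls $\max_{g\in B_n}\Om(g,x)$ up to $O(\log|B_n|)$, and one needs $\log|B_n|=o(n)$ to turn this into $o(n)$; for groups of exponential growth the estimate collapses, consistent with the failure of rigidity announced elsewhere in the paper.
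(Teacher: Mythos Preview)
Your proof is correct and follows the same overall architecture as the paper's: reduce to finitely generated $G$; from a $\beta_0$-conformal measure extract a point $x_0$ at which $\Omega$ has nonpositive upper linear drift; then run an averaging construction based at $x_0$ to manufacture $\beta$-conformal measures for every $\beta$ of the same sign. The implementations of the two key steps differ, however. For the averaging, the paper uses an Abel-type regularization $m_s=\sum_{g\in G} e^{\beta\Omega(g,x)-|g|s}\,\delta_{g\cdot x}$ and lets $s\to 0$, whereas you use Ces\`aro-type ball averages together with a subsequence along which $(u_{n+1}-u_{n-1})/u_n\to 0$; the bookkeeping is comparable. For the extraction of $x_0$, the paper passes to an extremal (hence ergodic) $\beta_0$-conformal measure, observes that $x\mapsto\limsup_{g}\beta_0\Omega(g,x)/|g|$ is $G$-invariant and thus a.e.\ constant, and rules out a positive constant by a measure estimate. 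Your Chebyshev/union-bound/Borel--Cantelli route is more elementary (no Krein--Milman, no ergodicity) and in fact gives the quantitatively sharper bound $\max_{g\in B_n}\Omega(g,x_0)=O(\log|B_n|)$. One notational nit: with your composition rule $L^\beta_gL^\beta_h=L^\beta_{hg}$ the symmetric difference that arises is $sB_n\,\triangle\,B_n$, not $B_ns\,\triangle\,B_n$; either set is contained in $B_{n+1}\setminus B_{n-1}$, so the estimate is unaffected.
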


Considering Theorem \ref{theorem.A} it would be natural to hypothesize that the KMS spectra for the C$^*$-dynamical systems that we consider are in general rigid for all groups $G$. This however turns out to be far from the case, even when staying within the class of amenable groups, as can be seen from the following main theorem of our article.

\begin{letterthm}\label{thm.all-closed-sets}
\begin{enumlist}
\item \label{thma1} Any wreath product group $\Gamma = \Lambda \wr \Z = \Lambda^{(\Z)} \rtimes \Z$ where $\Lambda$ is an infinite direct sum of finite groups admits a minimal topologically free action $\Gamma \actson X$ with the following universality property: for every closed subset $K \subset \R$ with $0 \in K$, there exists a continuous $1$-cocycle $\Gamma \times X \recht \R$ with KMS spectrum $K$ and with a unique $\beta$-KMS state for every $\beta \in K$.

\item\label{thma2} The free group $\F_{\infty}$ admits a free and minimal action $\F_\infty \actson X$ with the following universality property: for every closed subset $K \subset \R$ with $0 \not\in K$, there exists a continuous $1$-cocycle $\F_\infty \times X \recht \R$ with KMS spectrum $K$ and with a unique $\beta$-KMS state for every $\beta \in K$.
\end{enumlist}
\end{letterthm}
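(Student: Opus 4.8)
The plan is to work throughout with the reformulation recalled in the introduction: $\beta$ belongs to the KMS spectrum of $\alpha^{\Om}$ precisely when $X$ admits a $\beta$-conformal probability measure. Two soft remarks organize everything. First, the KMS spectrum is always a \emph{closed} subset of $\R$: a weak-$*$ limit of $\beta_n$-conformal measures with $\beta_n\recht\beta$ is $\beta$-conformal. So for the existence halves of \eqref{thma1} and \eqref{thma2} it suffices to produce $\beta$-conformal measures for $\beta$ in a dense subset of $K$. Second, an amenable group acting on a compact space always has an invariant probability measure, i.e.\ a $0$-conformal measure; hence $0$ is automatically in the spectrum in the wreath product case, matching the hypothesis $0\in K$ in \eqref{thma1}, while in \eqref{thma2} this is exactly the point that must be defeated, so the action $\F_\infty\actson X$ has to be built with \emph{no} invariant probability measure (then $\beta=0$ is never in any of its spectra, consistent with $0\notin K$; the case $K=\emptyset$ is then realized by the trivial cocycle).

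The combinatorial skeleton is a decomposition of the target. Write the open set $\R\setminus K$ as a disjoint union of open intervals $\R\setminus K=\bigsqcup_{k}I_k$; in \eqref{thma1} none of the $I_k$ contains $0$, and in \eqref{thma2} at most one of them does. For each $k$ I would construct a \emph{gadget}: a minimal topologically free action on a subshift $X_k$ of a small subgroup of $\Gamma$ — for the wreath product, of $F_k\wr\Z$, where $F_k$ is the $k$-th finite summand of $\Lambda$ and $\Z$ is the canonical copy; for the free group, of the copy of $\Z$ generated by $a_k$ together with enough auxiliary free generators to play the role described below — carrying a continuous $1$-cocycle $\Om_k$ whose KMS spectrum is exactly $\R\setminus I_k$. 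The mechanism, and the reason groups of exponential growth escape Theorem \ref{theorem.A}, is this: a $\beta$-conformal measure for $\Om_k$ must simultaneously be a Gibbs measure for the ``flip specification'' coming from the infinitely many copies of $F_k$ permuted by the shift, and a Gibbs measure for the ``potential specification'' $\beta\,\Om_k(t,\cdot)$ of the shift. Each family is nonempty on its own, but in infinite volume the two are compatible only for $\beta$ outside a prescribed interval; the finite group $F_k$ (even $F_k=\Z/2\Z$) supplies the extra, genuinely infinite-dimensional flip structure that $\Z$ alone lacks, and the exponents and coefficients of $\Om_k$ — think: the relevant per-step balance being a signed exponential sum $\sum_i\pm a_i e^{u_i\beta}$ whose non-negativity locus one prescribes — can be chosen so that the incompatible set is \emph{any} open interval, in \eqref{thma1} never containing $0$. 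When $\beta\notin I_k$ the two specifications have a unique common measure, giving a unique $\beta$-conformal measure on $X_k$; when $\beta\in I_k$ there is none.

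With the gadgets in hand I assemble. For the wreath product, let $\Gamma=\Lambda\wr\Z$ act on $X=\prod_k X_k$ by letting the $k$-th summand $F_k$ of $\Lambda$ act only on $X_k$ through its gadget and letting the canonical $\Z$ act diagonally as the common shift; since $\Lambda\wr\Z=\bigl(\bigoplus_k F_k\bigr)^{(\Z)}\rtimes\Z$ this is an honest action. Take $\Om=\bigoplus_k\Om_k$; continuity of the shift part $\Om(t,\cdot)=\sum_k\Om_k(t,\cdot)$ is arranged by a harmless rescaling of the gadgets, using that the location of the forbidden interval depends only projectively on the gadget data. A $\beta$-conformal measure for $\Om$ restricts on each factor to a $\beta$-conformal measure for $\Om_k$, so the spectrum lies in $\bigcap_k(\R\setminus I_k)=K$; conversely, for $\beta\in K$ the product of the unique gadget measures is $\beta$-conformal, so the spectrum is exactly $K$, and the same disjointness of the $X_k$ that makes the diagonal action minimal and topologically free also forces every $\beta$-conformal measure on $X$ to be of product form, giving uniqueness and hence a unique $\beta$-KMS state. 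For $\F_\infty$ I would run the analogous assembly, using the ping-pong/free-product geometry of the generators both to glue the gadgets into a free minimal action on a Cantor set and to guarantee the absence of an invariant probability measure (so $0\notin$ spectrum automatically), while the infinitely many generators provide the infinitely many independent gadget slots.

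The principal obstacle is the rigid gadget of the second paragraph: one must prove that for $\beta\in I_k$ \emph{no} $\beta$-conformal measure exists — a genuine ``no common Gibbs measure'' statement in which the geometry of the group (the $\Z$-tower of flips, hence exponential growth) is really used — and this must be done with only the prescribed finite group $F_k$ available, while keeping the gadget minimal, topologically free, and uniquely conformal at each admissible $\beta$; calibrating the cocycle so that the forbidden interval is an arbitrary prescribed $I_k$ (and, in \eqref{thma1}, never catches $0$) is the delicate part. The secondary difficulty is the assembly step: the subshifts $X_k$ must be chosen dynamically disjoint enough that the diagonal action, and its free-group analogue, are minimal and topologically free and that $\beta$-conformal measures on the product are forced to factorize, so that both the spectrum computation and the uniqueness statement survive. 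Verifying continuity of the infinite direct-sum cocycle and realizing the $\F_\infty$ action without an invariant measure are comparatively routine once the gadgets are available.
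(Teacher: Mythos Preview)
Your intuition about the core mechanism is right and matches the paper: a $\beta$-conformal measure for the $\Gamma=\Lambda\wr\Z$ action must be simultaneously conformal for the ``flip'' subgroup $\Lambda^{(\Z)}$ (which forces it to be a specific product measure $\mu_\beta$) and for the shift $\Z$ (which imposes a Radon--Nikodym constraint on $\mu_\beta$). The paper phrases this constraint as $\vphi(\beta)=1$ for a function of the form $\vphi(\beta)=\int_X H^\beta\,d\mu_\beta$. However, your decomposition into one gadget per complementary interval, each built from a \emph{single} finite summand $F_k$, does not work. For the translation action of a finite group $F$ on itself every $1$-cocycle is a coboundary, so the unique $\beta$-conformal measure is $\mu_\beta(x)=\mu(x)^\beta/\sum_y\mu(y)^\beta$ for some fixed probability $\mu$ on $F$, and the compatibility function becomes
\[
\vphi(\beta)=\frac{\sum_{x\in F}(H(x)\mu(x))^\beta}{\sum_{x\in F}\mu(x)^\beta}\,.
\]
The equation $\vphi(\beta)=1$ is then the vanishing of an exponential polynomial with at most $2|F|$ terms, so its solution set is finite (or all of $\R$); it can never be $\R\setminus I_k$ for a nondegenerate interval $I_k$. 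The same obstruction applies to any gadget of the shape you describe for $F_k\wr\Z$: the ``per-step balance'' you allude to is a finite signed exponential sum, and its sign pattern cannot be prescribed to change exactly on an arbitrary interval. Your ``harmless rescaling'' remark does not rescue this, since the issue is the finiteness of the level set, not its location.

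The paper avoids this entirely by \emph{not} decomposing $K$: it realizes the whole set at once as $K=\{\beta:\vphi(\beta)=1\}$ for a single function $\vphi$, where the crucial point is that $\vphi$ is built as an \emph{infinite} product $\prod_n(1+P_n\zeta_n)$ using all the summands $\Lambda_n$ simultaneously. The main technical lemma is a density statement showing that essentially any continuous $\vphi$ with $\vphi(0)=1$ and the right behavior at infinity arises this way; the $\zeta_n$ are chosen inductively so that the partial products converge uniformly to a prescribed target. This is where the infinitely many finite factors are genuinely needed, and it is the step your proposal is missing. For part~\ref{thma2} the paper uses a related but more intricate construction (a free product $\Lambda^{(\Z)}*\Z$ with a non-shift homeomorphism, two realizable functions with level sets at $|\Lambda_0|^{\pm1}$ rather than at $1$ to exclude $\beta=0$, and a passage through a dense embedding $\F_\infty\hookrightarrow\SL(2,\Z_p)$ to obtain freeness); your sketch for this part does not indicate any of these ingredients and would need substantial elaboration.
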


We prove part~1 in Theorem \ref{thm.all-closed-sets-amenable} and we prove part~2 in Section \ref{sec.all-closed-sets-nonamenable} below. We will see that these actions of $\Gamma$, resp.\ $\F_\infty$ are entirely explicit.
Also note that for arbitrary group actions, the KMS spectrum is always a closed set and that $0$ belongs to the KMS spectrum if and only if the action admits an invariant probability measure, which is always the case if the acting group is amenable and which is independent of the $1$-cocycle. So, the universality properties in Theorem \ref{thm.all-closed-sets} exhibit the most general possible behavior.

Besides demonstrating an overlooked connection between the structure of groups and KMS spectra, Theorem \ref{thm.all-closed-sets} also improves state of the art on constructions of KMS spectra in general. In a seminal paper \cite{BEH}, Bratteli, Elliott and Herman construct for a given closed set $K \subset \R$ a simple C$^*$-algebra and a $1$-parameter group with associated KMS spectrum $K$. The strategy of proof in \cite{BEH} is to use a closed set $K \subset \R$ to construct a dimension group, and then use classification results to construct a simple C$^*$-algebra and a $1$-parameter group with KMS spectrum $K$. This idea has since been used several times with variations to build interesting C$^*$-dynamical systems, see e.g. \cite{BEK, T3}. Despite the decade-long interest in the problem, our construction of two fixed simple C$^*$-algebras on which all KMS spectra can be realized is the first of its kind. Furthermore, our strategy of proof takes a different approach to the problem, by building the $1$-parameter groups ``by hand'', which has as a further advantage that our article is completely self-contained, and the proof of our main theorem does not require any results from classification theory of simple C$^*$-algebras.

\smallskip
{\center
\emph{Acknowledgements.} The proof of Theorem \ref{theorem.A} uses unpublished ideas developed by Klaus Thomsen and the first named author while working on \cite{CT}, which handles the special case $G=\mathbb{Z}$. We are grateful to Klaus Thomsen for allowing us to include them in this article, and for discussions leading to the results in Section \ref{section5}.
}

\section{Notations and preliminaries}

In this article $X$ and $Y$ will always denote compact metric spaces and $G$ and $\Lambda$ will always denote countable discrete groups. We denote the unit of any group by $e$. To keep this paper self-contained, we gather in this section several known and elementary results on
crossed product C$^{*}$-algebras, KMS states and their relation to conformal measures.

\subsection{Crossed products and KMS states}
Let $G$ be a countable group and let $X$ be a compact metric space, or equivalently a second countable compact Hausdorff space. An action of $G$ on $X$ is a representation of $G$ by homeomorphisms of $X$, i.e.\ it is a family $\{\phi_g\}_{g\in G}$ of homeomorphisms of $X$ satisfying $\phi_{gh} = \phi_g \circ \phi_h$ for all $g,h \in G$ and $\phi_{e}=\id_{X}$. We write this as $G\actson X$, and we often write $g \cdot x$ for $\phi_{g}(x)$ when the action is clear from context. Recall that the isotropy group at a point $x\in X$ is the subgroup $\{ g\in G \; : \; g\cdot x=x\}$ of $G$. The action $G\actson X$ is \emph{minimal} when $\{ g\cdot x \; :\; g\in G\}$ is dense in $X$ for all $x\in X$ and it is \emph{topologically free} if the set of elements in $X$ with trivial isotropy group is dense in $X$. For a group $G$, we denote the group $\bigoplus_{\N} G$ by $G^{(\N)}$, and for a space $X$ we let $X^{\N}$ denote the product space $\prod_{\N} X$.

Recall the definition of the \emph{reduced crossed product} for a countable group $G$ that acts on the compact metric space $X$. Take a unital embedding $C(X) \subset B(H)$ for some Hilbert space $H$, and let $\delta_{g, \xi} \in l^{2}(G, H)$ denote the map with $\delta_{g, \xi}(h)=0$ for $h\neq g$ and $\delta_{g, \xi}(g)=\xi$. Define a representation of $C(X)$ and $G$ on $ l^{2}(G, H)$ by
$$
f \delta_{g,\xi} = \delta_{g, f\circ \phi_{g} \xi} \quad \text{ and } \quad U_{h} \delta_{g,\xi} = \delta_{hg,\xi} \quad \text{ for $f\in C(X)$ and $h\in G$.}
$$
The \emph{reduced crossed product} $C(X)\rtimes_{r} G$ is the C$^*$-subalgebra of $B(l^{2}(G,H))$ generated by $C(X)$ and $\{U_{g}\}_{g\in G}$. We call the copy of $C(X)$ in $C(X)\rtimes_{r} G$ the diagonal. By construction
\begin{equation}\label{eq.unitary}
U_{g} f U_{g}^{*} = f\circ \phi_{g}^{-1} = f\circ \phi_{g^{-1}} \quad \text{ for all $g\in G$ and all $f\in C(X)$} \ ,
\end{equation}
and $C(X)\rtimes_{r} G$ contains
\begin{equation}\label{eq.dense-span}
\lspan \{ f U_{g} \; | \; f\in C(X) \text{ and } g\in G \}
\end{equation}
as a dense $*$-subalgebra.  There exists a conditional expectation $P : C(X)\rtimes_{r} G \to C(X)$ which is determined by the condition that $P(fU_{g})=0$ for $g\neq e$ and $f\in C(X)$. If $G$ acts on $X$ by a family $\{\phi_g\}_{g\in G}$ of homeomorphisms we call a map $\Omega: G \times X \to \mathbb{R}$ a \emph{continuous $1$-cocycle} if it is continuous and
\begin{equation} \label{eq.cocycle}
\Omega(g, \phi_{h}(x)) +\Omega(h,x) = \Omega(gh, x) \quad \text{ for all $g,h \in G$ and $x\in X$.}
\end{equation}
The continuous $1$-cocycles give rise to the continuous $1$-parameter groups we will investigate in this article. The following result is well known. For completeness, we provide a proof.

\begin{lemma} \label{lem.diagonalfixing}
Assume $G\actson X$ and $\Omega: G \times X \to \mathbb{R}$ is a continuous $1$-cocycle. There exists a continuous $1$-parameter group $\{\alpha_{t}^{\Omega}\}_{t\in \R}$ on $C(X) \rtimes_{r}G$ such that
$$
\alpha^{\Omega}_{t} (fU_{g}) = fU_{g} e^{-i t \Omega(g, \cdot)}
$$
for all $f\in C(X)$ and all $g\in G$ and $t\in \R$. In particular, $\alpha^{\Omega}_{t}(f)=f$ for all $f\in C(X)$ and all $t\in \R$.

If the action $G \actson X$ is topologically free, any continuous $1$-parameter group on $C(X) \rtimes_{r} G$ that fixes the diagonal $C(X)$ arises from a continuous $1$-cocycle $\Omega: G \times X \to \mathbb{R}$.
\end{lemma}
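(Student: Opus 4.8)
The plan is to prove the two assertions separately. For the first assertion, the natural approach is to define $\alpha_t^\Omega$ on the dense $*$-subalgebra \eqref{eq.dense-span} by the stated formula and verify that it extends to a $1$-parameter group of $*$-automorphisms. Concretely, for $t \in \R$ I would define a unitary $V_t$ on $l^2(G,H)$ by $V_t \delta_{g,\xi} = \delta_{g, e^{-it\Omega(g,\cdot)}\xi}$ (here $e^{-it\Omega(g,\cdot)} \in C(X) \subset B(H)$ acts on $\xi \in H$), and check that conjugation by $V_t$ preserves $C(X) \rtimes_r G$. The cocycle identity \eqref{eq.cocycle} is exactly what makes $t \mapsto V_t \cdot V_t^*$ a group homomorphism: one computes $V_t U_g V_t^* \delta_{h,\xi}$ and finds it equals $U_g e^{-it\Omega(g,\cdot)} \delta_{h,\xi}$ precisely because $\Omega(g,\phi_h(x)) + \Omega(h,x) = \Omega(gh,x)$; meanwhile $V_t f V_t^* = f$ for $f \in C(X)$ since $C(X)$ is abelian and acts by multiplication in the $H$-variable. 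This shows $\alpha_t^\Omega := \Ad(V_t)$ restricts to an automorphism of $C(X)\rtimes_r G$ with the asserted formula. Pointwise norm-continuity in $t$ follows from continuity of $\Omega$: on elements $fU_g$ of \eqref{eq.dense-span} one has $\|\alpha_t^\Omega(fU_g) - fU_g\| = \|f(e^{-it\Omega(g,\cdot)}-1)\| \to 0$ as $t \to 0$ by uniform continuity of $\Omega(g,\cdot)$ on the compact space $X$, and an $\eps/3$-argument extends this to all of $C(X)\rtimes_r G$ since the $\alpha_t^\Omega$ are isometric and \eqref{eq.dense-span} is dense.

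For the converse, suppose $G \actson X$ is topologically free and $\{\alpha_t\}_{t\in\R}$ is a continuous $1$-parameter group fixing $C(X)$ pointwise. The plan is to extract a cocycle from the action on the unitaries $U_g$. First I would show that $P(U_g^* \alpha_t(U_g))$ is, for each fixed $g$, a continuous one-parameter group of unitaries in $C(X)$ — equivalently, a continuous function $\R \times X \to \T$; this needs that $\alpha_t(U_g)$ is "supported on $g$" in the sense that $\alpha_t(U_g) = P(U_g^*\alpha_t(U_g)) U_g$, for which topological freeness is the crucial input. Indeed, since $\alpha_t$ fixes $C(X)$, for every $f \in C(X)$ we have $\alpha_t(U_g) f = \alpha_t(U_g f U_g^*) \alpha_t(U_g) = (f\circ\phi_{g^{-1}}) \alpha_t(U_g)$, so the partial isometry-type element $w := \alpha_t(U_g)$ intertwines the two representations $f$ and $f\circ\phi_{g^{-1}}$ of $C(X)$ on $l^2(G,H)$; a standard argument using topological freeness (the same mechanism behind uniqueness-type results for these crossed products) forces $w$ to lie in $C(X) U_g$. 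Writing $\alpha_t(U_g) = U_g\, c_t(g)$ with $c_t(g) \in C(X)$ unitary, the group law $\alpha_s \alpha_t = \alpha_{s+t}$ gives $c_{s+t}(g) = (c_s(g)\circ\phi_{g^{-1}}\cdots)$-type relations and, combined with $\alpha_t(U_g U_h) = \alpha_t(U_{gh})$, yields that $\{c_t(g)\}$ satisfies a cocycle identity in $g$; continuity and the one-parameter group property in $t$ let me write $c_t(g) = e^{-it\Omega(g,\cdot)}$ for a unique continuous $\Omega(g,\cdot): X \to \R$ (taking a continuous logarithm using that $t\mapsto c_t(g)$ is a norm-continuous one-parameter unitary group in the commutative C$^*$-algebra $C(X)$, hence of the form $\exp(-it h_g)$ with $h_g \in C(X,\R)$). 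The cocycle relation \eqref{eq.cocycle} for $\Omega$ then follows by differentiating (or directly reading off) the multiplicative cocycle relation for $c_t(g)$ at $gh$.

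The main obstacle I anticipate is the "support" step in the converse: showing rigorously that an element $w \in C(X)\rtimes_r G$ satisfying $w f = (f\circ\phi_{g^{-1}})w$ for all $f \in C(X)$ must belong to $C(X)U_g$ when the action is topologically free. One clean way is to look at the Fourier coefficients $P(U_h^* w) \in C(X)$; the intertwining relation forces each coefficient $P(U_h^* w)$ to be supported on the set $\{x : \phi_{h^{-1}g}(x) = x\}$, which by topological freeness has empty interior unless $h = g$, and a continuous function supported on a closed set with empty interior is zero. Making "$w$ equals the sum of its Fourier coefficients" precise requires a Cesàro/Fejér-type summation argument (as $C(X)\rtimes_r G$ need not have a genuine Fourier expansion), but this is routine for reduced crossed products. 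The remaining bookkeeping — promoting the pointwise-in-$g$ construction of $\Omega(g,\cdot)$ to a jointly continuous cocycle and checking \eqref{eq.cocycle} — is then straightforward from the one-parameter group axioms and continuity of $t \mapsto \alpha_t$.
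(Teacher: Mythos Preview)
Your proposal is correct and follows essentially the same route as the paper: define the same unitaries $V_t$ on $l^2(G,H)$ and set $\alpha_t^\Omega=\Ad(V_t)$ for the first part; for the converse, show the off-diagonal Fourier coefficients $P(U_h^*\alpha_t(U_g))$ vanish for $h\neq g$ via topological freeness, deduce $U_g^*\alpha_t(U_g)\in C(X)$, and apply Stone's theorem to extract $\Omega(g,\cdot)$.

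The one point where the paper is slicker is the step you flag as the main obstacle. Instead of a Fej\'er-type reconstruction, the paper uses the concrete representation: once $P(U_h^*\alpha_t(U_g))=0$ for all $h\neq g$, the formula $P(A)=\sum_h P_h A P_h$ (with $P_h$ the coordinate projections of $l^2(G,H)$) forces $U_g^*\alpha_t(U_g)$ to commute with every $P_h$, hence to equal its own conditional expectation in $C(X)$. This sidesteps any summability issue entirely. Your intertwining argument for the vanishing of the coefficients is equivalent to the paper's explicit bump-function computation at points of trivial isotropy, and the remaining Stone-theorem and cocycle-verification steps coincide.
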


\begin{proof}
For a continuous $1$-cocycle $\Omega: G \times X \to \mathbb{R}$ we can define a $1$-parameter group of unitaries $\{ V_{t} \}_{t\in \R}$ on $l^{2}(G, H)$ by setting
$$
V_{t} \delta_{g, \xi} = \delta_{g, \exp(-it\Omega(g, \cdot))\xi} \quad \text{ for $g\in G$, $t\in \R$ and $\xi \in H$,}
$$
and setting $\alpha^{\Omega}_{t}(A)=V_{t} AV_{t}^{*}$ for $A\in C(X)\rtimes_{r} G$ then defines the desired continuous $1$-parameter group. Assume now that $G$ acts topologically freely and $\alpha$ fixes $C(X)$. Let $g, h\in G$ with $g\neq h$ and let $x\in X$ have trivial isotropy group. Write $1=f_{1}+f_{2}$ with $f_{1}, f_{2} \in C(X)$ such that $f_{1}=0$ on a neighborhood $V$ of $\phi_{h}(x)$ and $f_{2}=0$ on a neighborhood $U$ of $\phi_{g}(x)$. Pick $\psi_{1} \in C_{c}(V)$ with $\psi_{1}(\phi_{h}(x))=1$ and $\psi_{2} \in C_{c}(U)$ with $\psi_{2}(\phi_{g}(x))=1$, then
$$
P(U_{h}^{*}\alpha_{t}(f_{1}U_{g}))(x) = P(\psi_{1}\circ \phi_{h}U_{h}^{*}\alpha_{t}(f_{1}U_{g}))(x)
= P(U_{h}^{*}\alpha_{t}(\psi_{1}f_{1}U_{g}))(x) =0
$$
and likewise one can argue that $P(U_{h}^{*}\alpha_{t}(f_{2}U_{g}))(x)=0$ by multiplying $\psi_{2}\circ \phi_{g}$ on the right. By topological freeness $P(U_{h}^{*}\alpha_{t}(U_{g}))=0$. If $P_{g}$ is the orthogonal projection onto $\{\delta_{g, \xi} : \xi \in H\}$, then $P(A)=\sum_{g\in G} P_{g}AP_{g}$, so $P(U_{h}^{*}\alpha_{t}(U_{g}))=0$ for $h\neq g$ implies that $U_{g}^{*}\alpha_{t}(U_{g})$ commutes with the projections $P_{h}$, $h\in G$. Hence $U_{g}^{*}\alpha_{t}(U_{g}) = P(U_{g}^{*}\alpha_{t}(U_{g})) \in C(X)$ is a unitary for each $t\in \R$. Stone's Theorem implies that the norm-continuous $1$-parameter group of unitaries $\{U_{g}^{*}\alpha_{t}(U_{g})\}_{t\in \R}$ is of the form $\{ e^{it\Omega(g, \cdot)} \}_{t\in \R}$ for some continuous real valued function $\Omega(g, \cdot) \in C(X)$. Since $\al$ is a $1$-parameter group of automorphisms, the functions $\{\Omega(g,\cdot) \}_{g\in G} $ defines a continuous $1$-cocycle, which proves the Lemma.
\end{proof}

The aim of this article is to investigate KMS states on the crossed products $C(X)\rtimes_{r} G$ for the $1$-parameter groups $\alpha^{\Omega}$ arising from continuous $1$-cocycles $\Omega: G \times X \to \mathbb{R}$. We refer the reader to \cite{BR} for a thorough introduction to KMS states and its relevance in quantum statistical mechanics. Recall that a \emph{$\beta$-KMS state} for a $\beta \in \R$ and the continuous $1$-parameter group $\alpha^{\Omega}$ on $C(X) \rtimes_{r} G$ is a state $\omega$ on $C(X) \rtimes_{r} G$ satisfying that
\begin{equation} \label{eq.KMS-condition}
\omega(AB) = \omega(B \alpha^{\Omega}_{i\beta}(A))
\end{equation}
for all $A$, $B$ in a norm dense, $\alpha^{\Omega}$-invariant $*$-subalgebra of $C(X) \rtimes_{r} G$ consisting of $\alpha^{\Omega}$ analytic elements. Equivalently, $\omega$ is a KMS state if \eqref{eq.KMS-condition} holds true for all elements $A$ and $B$ in the set \eqref{eq.dense-span}, c.f. Proposition 5.3.7 in \cite{BR}. Hence a state $\omega$ on $C(X) \rtimes_{r} G$ is a $\beta$-KMS state for $\alpha^{\Omega}$ if and only if
\begin{equation} \label{eq.KMS-reduction}
\omega(fU_{g} qU_{h}) = \omega( qU_{h} fU_{g} e^{\beta \cdot \Omega(g, \cdot)}) \quad \text{ for all $g,h\in G$ and $f,q \in C(X)$.}
\end{equation}
The question of existence of $\beta$-KMS states for different values of $\beta \in \R$ will be important in this article, so we will often work with the \emph{KMS spectrum} for $\alpha^{\Omega}$, which is the subset of $\mathbb{R}$ given as
$$
\{ \beta \in \R \mid \text{there exists a $\beta$-KMS state for $\alpha^{\Omega}$ on $C(X) \rtimes_{r} G$}  \} \ .
$$
It follows from the general theory on KMS states that a KMS spectrum is always closed, c.f. Proposition 5.3.23 in \cite{BR}.

\subsection{Conformal measures}
Assume that $G$ is a countable group, $X$ is a compact metric space and $G \actson X$. The Riesz representation theorem implies that any state $\omega$ on $C(X) \rtimes_{r} G$ defines a unique Borel probability measure on $X$ by restricting $\omega$ to the diagonal $C(X)$. In this section we will argue, that for the scope of this article, it suffices to describe the measures on $X$ arising from KMS states to obtain a description of the KMS states. The results we present in the following are standard observations if one describes the crossed product C$^*$-algebra as a groupoid C$^*$-algebra and use general results of Neshveyev and Renault \cite{N, R}, but for readability we give elementary proofs of these facts.

\begin{lemma} \label{lemma.states-conformal}
Assume $G\actson X$ and $\Omega: G\times X \to \mathbb{R}$ is a continuous $1$-cocycle. Let $\omega$ be a $\beta$-KMS state for $\alpha^{\Omega}$ on $C(X) \rtimes_{r} G$ and let $\mu_{\omega}$ be the probability measure on $X$ arising from restricting $\omega$ to $C(X)$. Then
\begin{equation}\label{eq.conformal-measure}
(g^{-1}\mu_{\omega})(B) :=\mu_{\omega}(\phi_{g}(B))=\int_{B} e^{\beta \cdot \Omega(g, x)} \ \mathrm{d} \mu_{\omega}(x) \quad \text{ for any Borel set $B\subset X$ and all $g\in G$.}
\end{equation}

Conversely, if $\mu$ is a probability measure on $X$ which satisfies \eqref{eq.conformal-measure} then
$$
A \to \int_{X} P(A) \ \mathrm{d}\mu
$$
defines a $\beta$-KMS state for $\alpha^{\Omega}$ on $C(X) \rtimes_{r} G$.
\end{lemma}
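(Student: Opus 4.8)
The plan is to verify the two directions by testing the KMS condition \eqref{eq.KMS-reduction} on carefully chosen elements of the dense $*$-subalgebra \eqref{eq.dense-span}, and by exploiting that the conditional expectation $P$ is recovered from a state via its restriction to $C(X)$.

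For the first direction, let $\omega$ be a $\beta$-KMS state with associated measure $\mu_\omega$. The key step is to apply \eqref{eq.KMS-reduction} with $A = f U_g$ and $B = U_{g^{-1}} = U_g^*$, where $f \in C(X)$ is arbitrary. On the left we get $\omega(f U_g U_g^*) = \omega(f) = \int_X f \, \mathrm{d}\mu_\omega$. On the right we compute $U_g^* (f U_g) e^{\beta \Omega(g,\cdot)} = (U_g^* f U_g)\, e^{\beta \Omega(g,\cdot)} = (f \circ \phi_g)\, e^{\beta \Omega(g,\cdot)}$ using \eqref{eq.unitary}, so the right-hand side is $\int_X (f\circ\phi_g)(x) \, e^{\beta\Omega(g,x)} \, \mathrm{d}\mu_\omega(x)$. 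Equating the two and using that $f$ ranges over all of $C(X)$ gives, after a change of variables and the Riesz representation theorem, exactly \eqref{eq.conformal-measure}: namely $\int_X (f\circ\phi_g) \, \mathrm{d}\mu_\omega = \int_X f \, \mathrm{d}(g^{-1}\mu_\omega)$ by definition of the pushforward, so $\mathrm{d}(g^{-1}\mu_\omega) = e^{\beta\Omega(g,\cdot)}\,\mathrm{d}\mu_\omega$, which is \eqref{eq.conformal-measure}. A small technical point to check is that $\Omega(e,\cdot) = 0$ (immediate from \eqref{eq.cocycle} with $g=h=e$), so the formula is consistent for $g=e$, and that the cocycle identity \eqref{eq.cocycle} makes \eqref{eq.conformal-measure} consistent under composition.

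For the converse, suppose $\mu$ satisfies \eqref{eq.conformal-measure} and define $\omega(A) = \int_X P(A)\,\mathrm{d}\mu$. This is a state: it is positive because $P$ is a positive (conditional expectation) map and $\mu$ is a probability measure, and $\omega(1) = \int_X 1 \, \mathrm{d}\mu = 1$. It remains to verify \eqref{eq.KMS-reduction} for $A = fU_g$ and $B = qU_h$. Here one computes $fU_g \cdot qU_h = f\,(q\circ\phi_{g^{-1}})\,U_{gh}$ using \eqref{eq.unitary}, so $P(fU_g qU_h) = f\,(q\circ\phi_{g^{-1}})$ if $gh = e$ and $0$ otherwise; hence $\omega(fU_g qU_h)$ is $\int_X f\,(q\circ\phi_{g^{-1}})\,\mathrm{d}\mu$ when $h = g^{-1}$ and $0$ otherwise. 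Similarly $qU_h \cdot fU_g\, e^{\beta\Omega(g,\cdot)} = q\,(f\circ\phi_{h^{-1}})\,e^{\beta\Omega(g,\cdot)\circ\phi_{h^{-1}}}\,U_{hg}$, which contributes to $P$ exactly when $hg = e$, i.e. again $h = g^{-1}$; in that case $h^{-1} = g$, so the term is $q\,(f\circ\phi_g)\,e^{\beta\Omega(g,\cdot)\circ\phi_g}$. Thus both sides vanish unless $h = g^{-1}$, and in that case we must check
\[
\int_X f\,(q\circ\phi_{g^{-1}})\,\mathrm{d}\mu \;=\; \int_X q\,(f\circ\phi_g)\,e^{\beta\,\Omega(g,\,\phi_g(\cdot))}\,\mathrm{d}\mu\,.
\]
The cocycle identity \eqref{eq.cocycle} with the pair $(g^{-1}, g)$ gives $\Omega(g^{-1},\phi_g(x)) + \Omega(g,x) = \Omega(e,x) = 0$, and replacing $x$ by $\phi_g(x)$ relates $\Omega(g,\phi_g(x))$ to $-\Omega(g^{-1}, \phi_{g^2}(x))$; more directly, substituting $y = \phi_g(x)$ in the right integral and invoking \eqref{eq.conformal-measure} in the form $\mathrm{d}(g^{-1}\mu)(y) = e^{\beta\Omega(g,y)}\mathrm{d}\mu(y)$ together with $\int_X F\,\mathrm{d}(g^{-1}\mu) = \int_X (F\circ\phi_g)\,\mathrm{d}\mu$ converts the left integral into the right one. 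The bookkeeping with which argument $\Omega$ is evaluated at after the substitution is the one place to be careful, but it is a mechanical check using \eqref{eq.cocycle}.

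I expect the main obstacle to be purely notational rather than conceptual: keeping straight the direction of the pushforward (the paper writes $g^{-1}\mu$ for $B \mapsto \mu(\phi_g(B))$, i.e. $(\phi_{g^{-1}})_*\mu$) and correctly tracking the point at which $\Omega$ is evaluated through each change of variables. Once the substitution $y = \phi_g(x)$ and the identity $\Omega(g^{-1},\phi_g(x)) = -\Omega(g,x)$ are in hand, both directions reduce to a single application of \eqref{eq.conformal-measure} and the Riesz representation theorem. No analytic subtleties arise because \eqref{eq.dense-span} consists of $\alpha^\Omega$-analytic elements and, by Proposition 5.3.7 in \cite{BR} as already noted, it suffices to check the KMS identity on this subalgebra.
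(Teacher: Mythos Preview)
Your approach is exactly the paper's: for the forward direction you apply the KMS identity to $A = fU_g$, $B = U_{g^{-1}}$, and for the converse you check \eqref{eq.KMS-reduction} on generators, noting that both sides vanish unless $h = g^{-1}$.

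There is, however, a computational slip in your converse. In the product $qU_h\, fU_g\, e^{\beta\Omega(g,\cdot)}$, after commuting $U_h$ past $f$ you have $q\,(f\circ\phi_{h^{-1}})\,U_{hg}\,e^{\beta\Omega(g,\cdot)}$; the exponential factor now sits to the \emph{right} of $U_{hg}$, not of $U_h$. In the only nonvanishing case $hg=e$ one has $U_{hg}=1$, so nothing gets composed and the result is simply $q\,(f\circ\phi_g)\,e^{\beta\Omega(g,\cdot)}$. The identity to verify is therefore
\[
\int_X f\,(q\circ\phi_{g^{-1}})\,d\mu \;=\; \int_X q\,(f\circ\phi_g)\,e^{\beta\Omega(g,\cdot)}\,d\mu,
\]
which is an immediate instance of \eqref{eq.conformint} with $F = f\,(q\circ\phi_{g^{-1}})$ --- no cocycle identity or change of variables is needed. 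The version you wrote, with $\Omega(g,\phi_g(\cdot))$ in the exponent, is in general false (a two-point space with a nontrivial cocycle already gives a counterexample), so the ``mechanical check'' you anticipate would not go through as stated. Once this is corrected, your argument coincides with the paper's.
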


Before turning to the proof, let us note that a probability measure $m$ on $X$ satisfies \eqref{eq.conformal-measure} if and only if
\begin{equation}\label{eq.conformint}
\int_{X} f \circ \phi_{g} \; e^{\beta \cdot \Omega(g, \cdot )} \ \mathrm{d} m = \int_{X} f  \ \mathrm{d} m  \quad \text{ for all $g\in G$ and $f\in C(X)$.}
\end{equation}

\begin{proof}
Let $\omega$ be a $\beta$-KMS state for $\alpha^{\Omega}$ on $C(X) \rtimes_{r} G$. For $f\in C(X)$ and $g\in G$ the KMS condition \eqref{eq.KMS-reduction} implies that
$$
\omega(f)=\omega \left( U_{g} f\circ\phi_{g} U_{g}^{*}\right)
= \omega \left( f\circ\phi_{g} U_{g^{-1}}  U_{g} e^{\beta \cdot \Omega(g, \cdot)}\right)
=\omega \big( f\circ\phi_{g} e^{\beta \cdot \Omega(g, \cdot)} \big) \
$$
which proves \eqref{eq.conformal-measure}. Assume conversely that $\mu$ is a measure satisfying \eqref{eq.conformal-measure}, and set
$$
\omega(A)= \int_{X} P(A) \ \mathrm{d}\mu \; \text{ for $A\in C(X) \rtimes_{r} G$.}
$$
Since $P(fU_{g}qU_{h})=0$ for any $f,q\in C(X)$ and $g,h \in G$ with $h\neq g^{-1}$, it suffices by \eqref{eq.KMS-reduction} to check that
$$
\omega( f \; q\circ \phi_{g^{-1}} ) = \omega(q \; f\circ \phi_{g} \; e^{\beta \cdot \Omega(g, \cdot )} )
$$
for all $f,q\in C(X)$ and $g\in G$, which is a straightforward consequence of \eqref{eq.conformint}.
\end{proof}

In a more general setup than the one considered in this article, measures satisfying \eqref{eq.conformal-measure} have been studied for decades in the field of dynamical systems. We follow the terminology introduced in dynamical systems \cite{DU}, and call a probability measure on $X$ that satisfies \eqref{eq.conformal-measure} an \emph{$e^{\beta \cdot \Omega}$-conformal measure} for the action $G \actson X$. If there can be no confusion which action $G\actson X$ and which continuous $1$-cocycle $\Omega$ we are referring to, we will simply call such a measure \emph{$\beta$-conformal}. It follows from Lemma \ref{lemma.states-conformal} that there exists a $\beta$-KMS state for $\alpha^{\Omega}$ on $C(X) \rtimes_{r} G$ if and only if there exists a $\beta$-conformal measure on $X$. So determining the KMS spectrum for $\alpha^{\Omega}$ is equivalent to determining for which values of $\beta \in \R$ there exists a $\beta$-conformal measure on $X$.

The surjective map $\omega \to \mu_{\omega}$ between $\beta$-KMS states for $\alpha^{\Omega}$ and $\beta$-conformal measures on $X$ described in Lemma \ref{lemma.states-conformal} is in general not injective, see e.g. Example 3.4 in \cite{CT}, but there is a simple sufficient condition that ensures that this map is a bijection. To describe this condition, we call a measure $\mu$ on $X$ \emph{essentially free} when $\mu$ is concentrated on points of $X$ with trivial isotropy group. When $\mu$ is conformal this corresponds precisely to essential freeness of the action $G \actson (X, \mu)$.

\begin{lemma} \label{lemma.trivial-isotropy}
Let $G\actson X$, let $\Omega: G \times X \to \mathbb{R}$ be a continuous $1$-cocycle and let $\omega$ be a $\beta$-KMS state for $\alpha^{\Omega}$ on $C(X)\rtimes_{r} G$. If $\mu_{\omega}$ is essentially free then $\omega$ is on the form
$$
\omega(A) = \int_{X} P(A) \ \mathrm{d} \mu_{\omega} \quad \text{ for all $A\in C(X) \rtimes_{r} G$.}
$$
\end{lemma}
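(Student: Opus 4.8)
The plan is to show that the KMS state $\omega$ is completely determined by its restriction $\mu_\omega$ to the diagonal, by proving that $\omega(fU_g) = 0$ for every $g \neq e$ and every $f \in C(X)$ once $\mu_\omega$ is essentially free. Combined with the fact that $\omega$ on a general element of the dense $*$-subalgebra $\lspan\{fU_g\}$ is a finite sum of such terms, this forces $\omega(A) = \omega(P(A)) = \int_X P(A)\,\mathrm d\mu_\omega$ on a norm-dense subalgebra, hence everywhere by continuity. So the entire content is the vanishing statement $\omega(fU_g)=0$ for $g\neq e$.

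The key step is a Cauchy--Schwarz / positivity argument exploiting topological (here: measure-theoretic) freeness, combined with the KMS relation. First I would use the GNS-type estimate coming from positivity of $\omega$: for $f \in C(X)$ and $g \neq e$,
\[
|\omega(fU_g)|^2 \le \omega(fU_g U_g^* \bar f)\,\omega(1) = \omega(|f|^2),
\]
but this crude bound is not enough; instead I would localise. The idea is that essential freeness of $\mu_\omega$ means that for $\mu_\omega$-a.e.\ $x$ we have $\phi_g(x) \neq x$, so one can separate $x$ from $\phi_g(x)$ by functions. Concretely, fix $g \neq e$ and $f \in C(X)$; it suffices to show $\omega(\psi f U_g) = 0$ for $\psi$ ranging over a family of continuous functions whose supports cover $\mu_\omega$-a.e.\ point and are chosen small enough that $\supp(\psi)$ and $\phi_g(\supp(\psi))$ are disjoint. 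For such a $\psi$, pick $\chi \in C(X)$ with $\chi \equiv 1$ on $\supp(\psi)$ and $\chi \equiv 0$ on $\phi_{g^{-1}}(\supp\psi)$ — possible since these two closed sets are disjoint. Then on one hand $\chi \cdot (\psi f U_g) = \psi f U_g$, while on the other hand, using \eqref{eq.unitary}, $(\psi f U_g)\cdot \chi = \psi f\, (\chi\circ\phi_{g^{-1}})\, U_g = 0$ because $\chi\circ\phi_{g^{-1}}$ vanishes on $\supp\psi$. Now apply the KMS condition \eqref{eq.KMS-reduction} with the pair $(\chi, \psi f U_g)$, or rather with a suitable analytic approximation: $\omega(\chi \cdot \psi f U_g) = \omega(\psi f U_g \cdot \alpha^\Omega_{i\beta}(\chi))$. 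Since $\chi \in C(X)$ is fixed by $\alpha^\Omega$ (Lemma \ref{lem.diagonalfixing}), $\alpha^\Omega_{i\beta}(\chi) = \chi$, so the right-hand side is $\omega(\psi f U_g \cdot \chi) = \omega(0) = 0$, while the left-hand side is $\omega(\psi f U_g)$. Hence $\omega(\psi f U_g) = 0$.

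To finish, I would cover $X$ up to a $\mu_\omega$-null set by countably many open sets $\{O_n\}$ small enough that $\overline{O_n} \cap \phi_g(\overline{O_n}) = \emptyset$ — this is where essential freeness enters: the set of $x$ with $\phi_g(x)=x$ is $\mu_\omega$-null, and around any $x$ with $\phi_g(x)\neq x$ compactness and the Hausdorff property give such a neighbourhood, so by second countability countably many suffice. Taking a continuous partition-type family $\psi_n$ subordinate to $\{O_n\}$ (or just $\psi_n \in C_c(O_n)$ exhausting $\mu_\omega$-a.e.\ point), the previous paragraph gives $\omega(\psi_n f U_g) = 0$ for all $n$; passing to the limit along $\sum_{n\le N}\psi_n \nearrow 1$ $\mu_\omega$-a.e.\ and using that $\omega(\,\cdot\, fU_g)$ is a bounded linear functional dominated in absolute value by $h \mapsto \omega(|h|^2)^{1/2}\|f\|$ (Cauchy--Schwarz again) together with dominated convergence against $\mu_\omega = \omega|_{C(X)}$, one concludes $\omega(fU_g) = 0$.

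The main obstacle I anticipate is the bookkeeping needed to make the limiting argument rigorous: one must control $\omega(hfU_g)$ for $h \in C(X)$ in terms of the measure $\mu_\omega$ (so that a $\mu_\omega$-a.e.\ statement about the $\psi_n$ is enough), and for this the Cauchy--Schwarz bound $|\omega(hfU_g)|^2 \le \omega(|h|^2)\,\omega(U_g^*|f|^2 U_g) = \omega(|h|^2)\,\mu_\omega(|f|^2\circ\phi_{g^{-1}})$ does the job, reducing everything to the honest measure $\omega(|h|^2) = \int |h|^2\,\mathrm d\mu_\omega$. A secondary subtlety is the analyticity requirement in the KMS condition: strictly one applies \eqref{eq.KMS-reduction}, which is already the reduced form valid for all $f,q \in C(X)$ and $g,h\in G$, so no approximation of $\chi$ is actually needed — $\chi$ and $\psi f U_g$ both lie in the span \eqref{eq.dense-span} and \eqref{eq.KMS-reduction} applies verbatim, which streamlines the argument.
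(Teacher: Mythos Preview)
Your argument is correct and is exactly the standard route (the paper itself simply defers to \cite{CT}, Theorem~3.10, which proceeds along the same lines): localise via the KMS relation applied with a separating function $\chi\in C(X)$, which is fixed by $\alpha^{\Omega}$, to obtain $\omega(\psi f U_g)=0$ on small sets, and then globalise using the Cauchy--Schwarz bound against $\mu_\omega$. One cosmetic slip that does not affect the argument: $U_g^{*}\,|f|^{2}\,U_g = |f|^{2}\circ\phi_g$, not $|f|^{2}\circ\phi_{g^{-1}}$; and the limiting step is cleanest if you first use regularity of $\mu_\omega$ to pass to a compact $K\subset X\setminus\{x:\phi_g(x)=x\}$ of measure $>1-\eps$ and take a \emph{finite} partition of unity there, rather than a countable one.
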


\begin{proof}
This proof follows exactly as the proof of Theorem 3.10 in \cite{CT}.
\end{proof}

When all conformal measures are essentially free, Lemma \ref{lemma.trivial-isotropy} implies that we can describe all KMS states on $C(X)\rtimes_{r} G$ by describing the conformal measures.

\begin{lemma} \label{lemma.cstaralgeba}
Let $G\actson X$, let $\Omega: G \times X \to \mathbb{R}$ be a continuous $1$-cocycle and let $\beta \in \R$. Assume all $\beta$-conformal measures are essentially free. We have an affine homeomorphism between
\begin{enumerate}
\item the $\beta$-conformal measures on $X$, and
\item the $\beta$-KMS states for $\alpha^{\Omega}$ on $C(X)\rtimes_{r} G$.
\end{enumerate}
The homeomorphism is given by $\mu \to \int_{X}P(\cdot) \; \mathrm{d}\mu$.
\end{lemma}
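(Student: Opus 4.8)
The strategy is to combine the two surjections already established into a bijection, and then upgrade the bijection to an affine homeomorphism by checking continuity in both directions. First I would recall from Lemma \ref{lemma.states-conformal} that the map $\Phi : \mu \mapsto (A \mapsto \int_X P(A) \, \mathrm{d}\mu)$ sends $\beta$-conformal measures to $\beta$-KMS states, and that the map $\Psi : \omega \mapsto \mu_\omega$ (restriction to the diagonal $C(X)$) sends $\beta$-KMS states to $\beta$-conformal measures. The composition $\Psi \circ \Phi$ is clearly the identity on conformal measures, since $P(f) = f$ for $f \in C(X)$ and the Riesz correspondence is injective. So the only thing to prove for bijectivity is that $\Phi \circ \Psi = \id$ on the set of $\beta$-KMS states. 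But this is exactly the content of Lemma \ref{lemma.trivial-isotropy}: under the standing hypothesis that every $\beta$-conformal measure is essentially free, any $\beta$-KMS state $\omega$ satisfies $\omega(A) = \int_X P(A) \, \mathrm{d}\mu_\omega = \Phi(\mu_\omega)(A)$. Hence $\Phi$ and $\Psi$ are mutually inverse bijections.

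Next I would verify affinity: both $\Phi$ and $\Psi$ are visibly affine, since $P$ and integration against a measure are linear and convex combinations of measures (resp.\ states) map to convex combinations. For the topological statement I would put the weak$^*$ topology on the state space of $C(X)\rtimes_r G$ and the weak$^*$ topology on probability measures on $X$ (equivalently, on states of $C(X)$). Continuity of $\Psi$ is immediate: if $\omega_n \to \omega$ weak$^*$ then in particular $\omega_n(f) \to \omega(f)$ for all $f \in C(X)$, i.e.\ $\mu_{\omega_n} \to \mu_\omega$. Continuity of $\Phi$ requires showing that $\mu_n \to \mu$ implies $\int_X P(A)\, \mathrm{d}\mu_n \to \int_X P(A)\, \mathrm{d}\mu$ for every $A \in C(X)\rtimes_r G$; since $P(A) \in C(X)$ for each fixed $A$, this is again just the definition of weak$^*$ convergence of measures. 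Both spaces are compact Hausdorff (the conformal measures form a weak$^*$-closed subset of the probability measures on the compact space $X$, hence compact; the KMS states form a weak$^*$-closed subset of the state space), so a continuous bijection between them is automatically a homeomorphism.

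The one genuine input is Lemma \ref{lemma.trivial-isotropy}, which is where essential freeness of the conformal measures is used and which the excerpt defers to the proof of Theorem 3.10 in \cite{CT}; everything else is formal. I do not expect any real obstacle here — the main point to be careful about is simply to phrase the compactness/closedness argument correctly so that ``continuous bijection $\Rightarrow$ homeomorphism'' applies, and to note explicitly that $\Phi$ and $\Psi$ land in the claimed sets (already provided by Lemma \ref{lemma.states-conformal}) and are genuinely inverse to each other (provided by Lemma \ref{lemma.trivial-isotropy}).
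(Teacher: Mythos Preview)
Your proposal is correct and matches the paper's approach: the paper does not give a separate proof of this lemma, treating it as an immediate consequence of Lemma~\ref{lemma.states-conformal} and Lemma~\ref{lemma.trivial-isotropy}, which is exactly what you do. Your added verification of affinity and weak$^*$-continuity is routine and correct (and in fact you prove continuity of both $\Phi$ and $\Psi$ directly, so the compactness argument is not even strictly needed).
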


In this article, we will only focus on two questions concerning KMS states. First, we will focus on the question of existence of $\beta$-KMS states, which is equivalent to the question of existence of $\beta$-conformal measures. Secondly, we will focus on systems where all $\beta$-conformal measures are essentially free, which by Lemma \ref{lemma.cstaralgeba} implies that describing the $\beta$-conformal measures is equivalent to describing the $\beta$-KMS states. We will therefore restrict our attention to describing conformal measures for the rest of the article. The following observation makes it substantially easier to verify if a measure is conformal. The result follows immediately because $1$-cocycles are completely determined by their values on a
generating set. The proof is left as an exercise.

\begin{lemma} \label{lem.genconformal}
Let $G$ be a group generated by a set $S$. Assume that $G\actson X$ and that $\Omega: G\times X\to \R$ is a continuous $1$-cocycle. A probability measure $m$ on $X$ is $\beta$-conformal if and only if it satisfies \eqref{eq.conformal-measure} for $g\in S$.
\end{lemma}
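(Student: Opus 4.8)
The plan is to reduce everything to the reformulation \eqref{eq.conformint} and to observe that the set
$$
H \;=\; \bigl\{\, g \in G \;\bigm|\; \textstyle\int_{X} f \circ \phi_{g}\; e^{\beta \cdot \Omega(g, \cdot)}\,\mathrm{d}m = \int_{X} f\,\mathrm{d}m \ \text{ for all } f \in C(X) \,\bigr\}
$$
is a subgroup of $G$. By hypothesis $S \subseteq H$, so since $S$ generates $G$ we get $H = G$, and this is exactly the statement that $m$ satisfies \eqref{eq.conformal-measure} for every $g \in G$, i.e.\ that $m$ is $\beta$-conformal. The forward implication is trivial: a $\beta$-conformal measure satisfies \eqref{eq.conformal-measure}, equivalently \eqref{eq.conformint}, for all $g \in G$, in particular for $g \in S$. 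So the whole content is the verification that $H$ is a subgroup, which relies only on the cocycle identity \eqref{eq.cocycle} (continuity of $\Omega$ being used only to guarantee that each $e^{\beta \cdot \Omega(g,\cdot)}$ lies in $C(X)$, so that the test functions appearing below are legitimate).

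First I would extract the two consequences of \eqref{eq.cocycle} that drive the argument: putting $g = h = e$ gives $\Omega(e, \cdot) = 0$, and putting $h = g^{-1}$ gives $\Omega(g^{-1}, x) = -\,\Omega(g, \phi_{g^{-1}}(x))$ for all $x \in X$. Together with $\phi_{e} = \id_{X}$, the first of these shows $e \in H$. For closure under multiplication, take $g, h \in H$ and $f \in C(X)$, set $F = (f \circ \phi_{g})\, e^{\beta \cdot \Omega(g, \cdot)} \in C(X)$, and use $\Omega(gh, x) = \Omega(g, \phi_{h}(x)) + \Omega(h, x)$ to compute
$$
\int_{X} (f \circ \phi_{gh})\, e^{\beta \cdot \Omega(gh, \cdot)} \,\mathrm{d}m \;=\; \int_{X} (F \circ \phi_{h})\, e^{\beta \cdot \Omega(h, \cdot)} \,\mathrm{d}m \;=\; \int_{X} F \,\mathrm{d}m \;=\; \int_{X} f \,\mathrm{d}m ,
$$
where the middle equality uses $h \in H$ and the last uses $g \in H$. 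Hence $gh \in H$.

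The only step requiring a small trick — and the reason the paper can call this an exercise rather than trivial — is closure under inverses. Given $g \in H$, to show $g^{-1} \in H$ I would apply the defining relation for $g$ not to an arbitrary $f$ directly, but to the test function $f = (h \circ \phi_{g^{-1}})\, e^{\beta \cdot \Omega(g^{-1}, \cdot)}$, where $h \in C(X)$ is arbitrary. Using $\phi_{g^{-1}} \circ \phi_{g} = \id_{X}$ and the identity $\Omega(g^{-1}, \phi_{g}(x)) = -\,\Omega(g, x)$ noted above, the integrand $(f \circ \phi_{g})\, e^{\beta \cdot \Omega(g, \cdot)}$ collapses to $h$, so the $g$-relation becomes $\int_{X} (h \circ \phi_{g^{-1}})\, e^{\beta \cdot \Omega(g^{-1}, \cdot)}\,\mathrm{d}m = \int_{X} h\,\mathrm{d}m$ for all $h \in C(X)$, i.e.\ $g^{-1} \in H$. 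With $H$ now shown to contain $e$ and to be closed under products and inverses, it is a subgroup containing $S$, hence equals $G$, completing the proof.
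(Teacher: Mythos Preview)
Your proof is correct. The paper does not supply a proof of this lemma at all: it remarks that ``$1$-cocycles are completely determined by their values on a generating set'' and leaves the verification as an exercise. Your subgroup argument is exactly the intended formalization of that remark, and all three steps (identity, products, inverses) are carried out cleanly.
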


If $G\actson X$ and $H: X\to \R$ is a continuous function, we can define a continuous $1$-cocycle $\Omega: G\times X\to \R$ by setting
\begin{equation}\label{eq.coboundary}
\Omega(g,x):=H(\phi_{g}(x))-H(x) \quad \text{  for $g\in G$ and $x\in X$} \; ,
\end{equation}
and we call such a continuous $1$-cocycle a \emph{coboundary}. The set of continuous $1$-cocycles $\Omega: G\times X\to \R$ is an abelian group with addition as operation, and we call two continuous $1$-cocycles \emph{cohomologous} if their difference is a coboundary. The following result on the relation between the $\beta$-conformal measures for cohomologous $1$-cocycles is standard to prove, so we leave its verification to the reader.

\begin{lemma}\label{lemma.cohomologous}
Let $G\actson X$ and let $\Omega: G\times X\to \R$ be a continuous $1$-cocycle. Assume that $H:X\to \mathbb{R}$ is a continuous function, and define a continuous $1$-cocycle $\tilde{\Omega}:G\times X\to \R$ by
$$
\tilde{\Omega}(g, \cdot):=\Omega(g,\cdot)+H\circ \phi_{g}(\cdot) -H(\cdot) \quad \text{ for all $g\in G$}.
$$
Let $\beta \in \R$. The $e^{\beta \cdot \Omega}$-conformal measures are in bijective correspondence with the $e^{\beta \cdot \tilde{\Omega}}$-conformal measures via the map
$$
\mathrm{d} m \mapsto \frac{e^{\beta H}\mathrm{d} m}{\int_{X} e^{\beta H}\mathrm{d} m} \ .
$$
\end{lemma}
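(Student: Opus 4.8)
The plan is to work entirely on the level of conformal measures and to use the integral reformulation \eqref{eq.conformint} of $\beta$-conformality. Given an $e^{\beta\cdot\Om}$-conformal probability measure $m$, set $c:=\int_X e^{\beta H}\,\mathrm{d}m$. Since $X$ is compact and $H$ is continuous, $e^{\beta H}$ is a strictly positive element of $C(X)$, so $c\in(0,\infty)$ and $\mathrm{d}\tilde m:=c^{-1}e^{\beta H}\,\mathrm{d}m$ is again a Borel probability measure on $X$; this is exactly the measure assigned to $m$ by the map in the statement. Everything then reduces to two verifications: that $\tilde m$ is $e^{\beta\cdot\tilde\Om}$-conformal, and that the obviously-defined backward map is a genuine two-sided inverse.

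For the first verification, fix $f\in C(X)$ and $g\in G$. Using the defining relation $\tilde\Om(g,\cdot)=\Om(g,\cdot)+H\circ\phi_g-H$, the factor $e^{\beta H}$ coming from $\mathrm{d}\tilde m$ cancels the $e^{-\beta H}$ sitting inside $e^{\beta\cdot\tilde\Om(g,\cdot)}$, while the surviving $e^{\beta H\circ\phi_g}$ combines with $f\circ\phi_g$ into $(f e^{\beta H})\circ\phi_g$. Concretely,
\[
\int_X f\circ\phi_g\; e^{\beta\cdot\tilde\Om(g,\cdot)}\,\mathrm{d}\tilde m
= c^{-1}\int_X (f e^{\beta H})\circ\phi_g\; e^{\beta\cdot\Om(g,\cdot)}\,\mathrm{d}m
= c^{-1}\int_X f e^{\beta H}\,\mathrm{d}m
= \int_X f\,\mathrm{d}\tilde m ,
\]
where the middle equality is \eqref{eq.conformint} for $m$ applied to the continuous function $f e^{\beta H}\in C(X)$. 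Since $f$ and $g$ were arbitrary, the equivalence between \eqref{eq.conformal-measure} and \eqref{eq.conformint} shows that $\tilde m$ is $e^{\beta\cdot\tilde\Om}$-conformal.

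For bijectivity, observe that $\Om(g,\cdot)=\tilde\Om(g,\cdot)+(-H)\circ\phi_g-(-H)$, so $\Om$ is obtained from $\tilde\Om$ by the same recipe with $H$ replaced by $-H$. Applying the previous paragraph with the roles of $\Om$ and $\tilde\Om$ interchanged and $H$ replaced by $-H$ shows that $\mathrm{d}\tilde m\mapsto\bigl(\int_X e^{-\beta H}\,\mathrm{d}\tilde m\bigr)^{-1}e^{-\beta H}\,\mathrm{d}\tilde m$ sends $e^{\beta\cdot\tilde\Om}$-conformal measures to $e^{\beta\cdot\Om}$-conformal measures. Finally, starting from $m$ and $\tilde m=c^{-1}e^{\beta H}\,\mathrm{d}m$, we have $\int_X e^{-\beta H}\,\mathrm{d}\tilde m=c^{-1}\int_X \mathrm{d}m=c^{-1}$, so the backward map returns $c\,e^{-\beta H}\cdot c^{-1}e^{\beta H}\,\mathrm{d}m=\mathrm{d}m$; the symmetric computation handles the other composition. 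There is no real obstacle here: the whole content of the lemma is the cancellation in the displayed line, and the only points that require any care are bookkeeping of the normalizing constant $c$ and the orientation of the coboundary term $H\circ\phi_g-H$ in the definition of $\tilde\Om$.
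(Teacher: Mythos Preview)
Your proof is correct and is exactly the standard verification the paper has in mind; indeed, the paper does not give its own proof but simply writes ``we leave its verification to the reader,'' and your argument via \eqref{eq.conformint} together with the symmetry $H\leftrightarrow -H$ is precisely that verification.
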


Since we will use the following observation on product measures of product actions repeatedly, we have included a proof for convenience of the reader.

\begin{lemma} \label{lemma.prod-measure}
Assume that $G_{i} \actson X_{i}$ and that $\Omega_{i}: G_{i}\times X_{i} \to \R$ is a continuous $1$-cocycle for $i=1,2$. Consider the product action $G_{1}\times G_{2} \actson X_{1}\times X_{2}$ and define the $1$-cocycle
$$
\Omega: (G_{1}\times G_{2}) \times ( X_{1}\times X_{2}) \to \R \;
$$
by $\Omega((g_{1}, g_{2}), (x_{1}, x_{2} ))=\Omega_{1}(g_{1}, x_{1})+\Omega_{2}(g_{2}, x_{2})$.

Assume that there exists a unique $e^{\beta \cdot \Omega_{1}}$-conformal probability measure $\nu$ on $X_{1}$. Then the map
$$
\mu\mapsto \nu\times \mu
$$
is an affine homeomorphism from the set of $e^{\beta \cdot \Omega_{2}}$-conformal measures $\mu$ on $X_{2}$ onto the set of $e^{\beta\cdot \Omega}$-conformal measures on $X_{1}\times X_{2}$.
\end{lemma}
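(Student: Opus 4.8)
The plan is to verify directly that $\nu \times \mu$ is $e^{\beta\cdot\Omega}$-conformal whenever $\mu$ is $e^{\beta\cdot\Omega_2}$-conformal, using the integral characterization \eqref{eq.conformint} on a generating (spanning) set of test functions, and then to establish that every $e^{\beta\cdot\Omega}$-conformal measure on $X_1\times X_2$ is of this form by first projecting to $X_1$, using uniqueness there, and then disintegrating. First I would treat the easy direction: by Lemma \ref{lem.genconformal} and the product structure of the cocycle, it suffices to check \eqref{eq.conformint} for group elements of the form $(g_1,e)$ and $(e,g_2)$, and it suffices to test against functions of product form $f_1 \otimes f_2$ since their span is dense in $C(X_1\times X_2)$. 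For $(g_1,e)$, the cocycle is $\Omega_1(g_1,x_1)$, which depends only on $x_1$, so the integral over $X_1\times X_2$ factors as $\big(\int_{X_1} f_1\circ\phi_{g_1}\, e^{\beta\Omega_1(g_1,\cdot)}\,d\nu\big)\big(\int_{X_2} f_2\, d\mu\big) = \big(\int f_1\, d\nu\big)\big(\int f_2\, d\mu\big)$ by conformality of $\nu$; the case $(e,g_2)$ is symmetric using conformality of $\mu$. Affinity and injectivity of $\mu\mapsto\nu\times\mu$ are immediate, as is continuity in the weak$^*$ topology (both directions, on compact sets of probability measures).

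The substantive direction is surjectivity: given an $e^{\beta\cdot\Omega}$-conformal measure $m$ on $X_1\times X_2$, I want to show $m = \nu\times\mu$ for some $e^{\beta\cdot\Omega_2}$-conformal $\mu$. Let $\pi_1, \pi_2$ be the coordinate projections and set $\nu' = (\pi_1)_*m$, $\mu = (\pi_2)_*m$. Testing \eqref{eq.conformint} for $m$ against functions $f_1\otimes 1$ with $g = (g_1,e)$ shows that $\nu'$ is $e^{\beta\cdot\Omega_1}$-conformal on $X_1$, so by the uniqueness hypothesis $\nu' = \nu$; similarly testing against $1\otimes f_2$ with $g=(e,g_2)$ shows $\mu$ is $e^{\beta\cdot\Omega_2}$-conformal. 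It remains to prove $m = \nu\times\mu$, i.e.\ that the two marginals are ``independent'' under $m$. The cleanest route is a disintegration argument: write $m = \int_{X_1} m_{x_1}\, d\nu(x_1)$ for a measurable family of probability measures $m_{x_1}$ on $X_2$ (using that $X_1$ is a compact metric, hence standard Borel, space and $\nu' = \nu = (\pi_1)_* m$). The goal is to show $m_{x_1} = \mu$ for $\nu$-a.e.\ $x_1$. Testing \eqref{eq.conformint} against $f_1\otimes f_2$ with $g=(e,g_2)$ gives $\int_{X_1} f_1(x_1)\big(\int_{X_2} f_2(\phi_{g_2}(x_2)) e^{\beta\Omega_2(g_2,x_2)}\, dm_{x_1}(x_2)\big)\,d\nu(x_1) = \int_{X_1} f_1(x_1)\big(\int_{X_2} f_2\, dm_{x_1}\big)\,d\nu(x_1)$; since this holds for all $f_1\in C(X_1)$, we get for $\nu$-a.e.\ $x_1$ (with a null set depending a priori on $f_2$ and $g_2$, removed simultaneously by a countable density argument) that $m_{x_1}$ satisfies the conformality equation on $X_2$ with respect to each $g_2$ in a countable generating set, hence $m_{x_1}$ is $e^{\beta\cdot\Omega_2}$-conformal for $\nu$-a.e.\ $x_1$.

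Here is where the uniqueness hypothesis on $X_1$ does \emph{not} directly help (we have no uniqueness on $X_2$), so a different idea is needed to conclude $m_{x_1}$ is $\nu$-a.e.\ constant. The cleanest fix: test \eqref{eq.conformint} for $m$ against $f_1\otimes f_2$ with $g=(g_1,e)$, which gives $\int_{X_1}(f_1\circ\phi_{g_1})(x_1)\, e^{\beta\Omega_1(g_1,x_1)}\big(\int f_2\, dm_{x_1}\big)\,d\nu(x_1) = \int_{X_1} f_1(x_1)\big(\int f_2\, dm_{x_1}\big)\,d\nu(x_1)$. Writing $F(x_1) := \int_{X_2} f_2\, dm_{x_1}$, this says exactly that the (complex) measure $F\,d\nu$ is again $e^{\beta\cdot\Omega_1}$-conformal in the sense of \eqref{eq.conformint}. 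By uniqueness, a conformal \emph{probability} measure is unique; a standard argument (decomposing $F\,d\nu$ into real and imaginary, positive and negative parts, each of which after normalization is a conformal probability measure equal to $\nu$) then forces $F\,d\nu$ to be a scalar multiple of $\nu$, i.e.\ $F$ is $\nu$-a.e.\ constant, equal to $\int F\,d\nu = \int_{X_2} f_2\, d\mu$. Applying this to a countable dense set of $f_2\in C(X_2)$ shows $m_{x_1} = \mu$ for $\nu$-a.e.\ $x_1$, hence $m = \nu\times\mu$. The main obstacle is precisely this last step — upgrading ``the conditional measures are individually conformal'' to ``they are almost surely constant'' — and the resolution is to exploit uniqueness of the conformal probability measure on $X_1$ via the observation that $F\,d\nu$ inherits conformality; the measure-theoretic bookkeeping (disintegration, simultaneous removal of countably many null sets, the Jordan decomposition trick for complex conformal measures) is routine and I would present it compactly.
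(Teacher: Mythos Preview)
Your proof is correct, but the surjectivity argument takes a somewhat heavier route than the paper's. You disintegrate $m$ over $X_1$, observe that for each $f_2\in C(X_2)$ the function $F(x_1)=\int f_2\,dm_{x_1}$ makes $F\,d\nu$ a conformal \emph{signed} measure, and then invoke a Jordan decomposition argument (positive and negative parts are mutually singular, the density $e^{\beta\Omega_1(g,\cdot)}$ is positive, so each part is again conformal and hence a multiple of $\nu$) to conclude $F$ is $\nu$-a.e.\ constant. This is fine, and your step showing that the fibre measures $m_{x_1}$ are individually conformal is correct but ultimately not used.

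The paper avoids both disintegration and signed measures: given the $e^{\beta\cdot\Omega}$-conformal measure $\eta$, for every continuous $F:X_2\to[1,+\infty)$ it forms the \emph{positive} probability measure on $X_1$ defined by $\cU\mapsto \rho^{-1}\int_{\cU\times X_2}F\circ\pi_2\,d\eta$ (with $\rho=\int F\,d\mu$), checks directly that this is $e^{\beta\cdot\Omega_1}$-conformal, and concludes by uniqueness that it equals $\nu$. That immediately yields $\int H(x_1)F(x_2)\,d\eta=\int H\,d\nu\int F\,d\mu$ for all $H\in C(X_1)$ and all $F\geq 1$, which by linearity and density gives $\eta=\nu\times\mu$. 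Restricting to $F\geq 1$ is the small trick that keeps everything in the realm of genuine probability measures and makes the Jordan decomposition step unnecessary. Your argument and the paper's are morally the same---both exploit uniqueness on $X_1$ via ``$F$-weighted'' marginals---but the paper's formulation is shorter and more elementary.
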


\begin{proof}
Assume that $\mu$ is an $e^{\beta \cdot \Omega_{2}}$-conformal probability measure on $X_{2}$, and that $U_{i}\subset X_{i}$ is a Borel set for $i=1,2$. For $g_{i}\in G_{i}$ we have
$$
\nu\times \mu \left[ (g_{1}, g_{2}) \cdot (U_{1}\times U_{2}) \right]
= \nu(g_{1} \cdot U_{1}) \cdot \mu(g_{2}\cdot U_{2})
=\int_{U_{1}\times U_{2}} e^{\beta \cdot \Omega((g_{1}, g_{2}), (x_{1}, x_{2}))} \ \mathrm{d} (\nu\times\mu )\; ,
$$
from which it follows that the map is well defined. Since the map is clearly injective, affine and continuous, it suffices to argue that it is surjective.

Denote by $\pi_{i} : X_{1}\times X_{2} \to X_{i}$ the projection for $i=1,2$ and assume that $\eta$ is an $e^{\beta \cdot \Omega}$-conformal probability measure. Then, $(\pi_{2})_* \eta = \eta \circ \pi_{2}^{-1}$ is an $e^{\beta \cdot\Om_{2}}$-conformal probability measure on $X_{2}$, which we will denote by $\mu$. Let $F : X_{2} \recht [1,+\infty)$ be a continuous function and put $\rho = \int_{X_{2}} F \; d \mu$. The map
$$X_1 \supset \cU \mapsto \rho^{-1} \int_{\cU \times X_{2}} F\circ \pi_{2} \; d\eta$$
defines a probability measure $m$ on $X_{1}$, with the property that
$$
\int_{X_{1}} f \ \mathrm{d} m = \rho^{-1}\int_{X_{1}\times X_{2}} f\circ\pi_{1} \; F \circ \pi_{2} \ \mathrm{d} \eta \quad \text{ for all } f\in C(X_{1}) \ ,
$$
from which it follows that $m$ is $e^{\beta \cdot \Omega_{1}}$-conformal, i.e. $m=\nu$. This implies that
$$\int_{X_{1} \times X_{2}} H\circ \pi_{1} \; F \circ \pi_{2} \; d\eta = \int_{X_{1} \times X_{2}} H(x_{1}) \; F(x_{2}) \; d(\nu \times \mu)(x_{1},x_{2})$$
for all $H \in C(X_1)$ and $F \in C(X_{2})$ with $F \geq 1$. By linearity and density, we conclude that $\eta = \nu \times \mu$. This concludes the proof.
\end{proof}

\section{Groups of subexponential growth}

The purpose of this section is to prove Theorem \ref{theorem.A}. The strategy of proof is similar to the one used in the case $G=\Z$ presented in \cite{CT}, yet uses a different approach to construct conformal measures.

Let $G$ be a finitely generated group, let $S$ be a finite symmetric set that generates $G$ as a semi-group, and let $g\to |g|$ denote the corresponding word length function. Recall that $G$ is said to be of subexponential growth if
$$
\limsup_{k} \ \left \lvert \{ g\in G \ | \ |g|=k  \} \right \rvert^{1/k} =1 \; .
$$
In the following we will use the notation $\limsup_{g\to \infty} f(g)$ for any bounded function $f: G \to \R$ to denote the number
$$
\lim_{n\to \infty} \ \sup_{|g| >n} f(g) \; .
$$

\begin{lemma}\label{lem.subex}
Let $G$ be a finitely generated group of subexponential growth. Assume that $G\actson X$ and $\Omega : G \times X \to \mathbb{R}$ is a continuous $1$-cocycle. Let $\beta \in \R$. There exists a $\beta$-conformal measure if and only if there exists an $x\in X$ such that
\begin{equation} \label{eq.limsup}
\limsup_{g\to \infty}  \  \frac{\beta \cdot \Omega(g, x)}{|g|}  \leq 0 \ .
\end{equation}
\end{lemma}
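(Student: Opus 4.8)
\emph{Proof strategy.} The plan is to prove the two implications separately, using a Borel--Cantelli estimate for the ``only if'' direction and a weighted orbit-averaging construction for the ``if'' direction. Throughout I fix the finite symmetric generating set $S$, write $B_n=\{g\in G:|g|\le n\}$, and record two elementary facts that I would establish first. Iterating the cocycle identity \eqref{eq.cocycle} along a minimal-length word for $g$ gives $|\Om(g,x)|\le C|g|$ for all $g\in G$ and $x\in X$, where $C=\max_{s\in S}\sup_{x\in X}|\Om(s,x)|$; in particular, for each fixed $x$ the function $g\mapsto \beta\Om(g,x)/|g|$ is bounded on $G$, so the $\limsup$ in \eqref{eq.limsup} makes sense. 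Secondly, subexponential growth of $G$ gives $\limsup_n|B_n|^{1/n}\le 1$ (one has $|B_{m+n}|\le|B_m|\,|B_n|$ and $|B_n|\le\sum_{k\le n}|\{|g|=k\}|$, and $\limsup_k|\{|g|=k\}|^{1/k}=1$ forces the ball counts to grow subexponentially too).

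For the ``only if'' direction, suppose $\mu$ is a $\beta$-conformal probability measure. Taking $f\equiv 1$ in \eqref{eq.conformint} yields $\int_X e^{\beta\Om(g,x)}\,d\mu(x)=1$ for every $g\in G$. Fix $\eps>0$ and let $A_g=\{x\in X:\beta\Om(g,x)>\eps|g|\}$, an open set. Markov's inequality applied to $e^{\beta\Om(g,\cdot)}$ gives $\mu(A_g)\le e^{-\eps|g|}$, so summing over $G$ shell by shell, $\sum_{g\in G}\mu(A_g)\le\sum_{k\ge 0}|\{|g|=k\}|\,e^{-\eps k}<\infty$, precisely because $G$ has subexponential growth. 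By Borel--Cantelli, $\mu$-almost every $x$ lies in only finitely many $A_g$, hence satisfies $\limsup_{g\to\infty}\beta\Om(g,x)/|g|\le\eps$. Intersecting the corresponding conull sets over $\eps=1/k$, $k\in\N$, produces a $\mu$-conull, hence nonempty, set of points $x$ with $\limsup_{g\to\infty}\beta\Om(g,x)/|g|\le 0$, which is \eqref{eq.limsup}.

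For the ``if'' direction, fix $x_0$ satisfying \eqref{eq.limsup}, put $Z_n=\sum_{g\in B_n}e^{\beta\Om(g,x_0)}$ and let $\mu_n=Z_n^{-1}\sum_{g\in B_n}e^{\beta\Om(g,x_0)}\,\delta_{\phi_g(x_0)}$, a probability measure on $X$. Combining \eqref{eq.limsup} with $\limsup_n|B_n|^{1/n}\le1$ shows $\limsup_n Z_n^{1/n}\le 1$, i.e.\ $(Z_n)$ grows subexponentially; since also $1\le Z_1\le Z_2\le\cdots$, it follows that for every $r\in\N$ and $\delta>0$ there are infinitely many $n$ with $Z_{n+r}\le(1+\delta)Z_{n-r}$ — otherwise $Z_n$ would grow geometrically along an arithmetic progression of step $2r$, contradicting $\limsup_n Z_n^{1/n}\le1$. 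On the other hand, a direct computation using the cocycle identity and the substitution $g\mapsto hg$ gives, for $f\in C(X)$ and $h\in G$ with $|h|\le r$,
$$\Big|\int_X f\circ\phi_h\;e^{\beta\Om(h,\cdot)}\,d\mu_n-\int_X f\,d\mu_n\Big|\le\frac{\|f\|_\infty}{Z_n}\sum_{g\in hB_n\mathbin{\triangle}B_n}e^{\beta\Om(g,x_0)}\le\|f\|_\infty\Big(\frac{Z_{n+r}}{Z_{n-r}}-1\Big),$$
using $B_{n-r}\subseteq hB_n\cap B_n$ and $hB_n\cup B_n\subseteq B_{n+r}$ together with $Z_n\ge Z_{n-r}$. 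Now choose, for each $k$, an index $n_k\ge k$ with $Z_{n_k+k}/Z_{n_k-k}\le1+1/k$, extract a weak-$*$ convergent subsequence $\mu_{n_k}\recht\mu$, and note that each integrand $f\circ\phi_h\;e^{\beta\Om(h,\cdot)}$ is a fixed element of $C(X)$; passing to the limit in the displayed estimate yields $\int_X f\circ\phi_h\;e^{\beta\Om(h,\cdot)}\,d\mu=\int_X f\,d\mu$ for all $h\in G$ and $f\in C(X)$, so $\mu$ is $\beta$-conformal by \eqref{eq.conformint} (equivalently, by Lemma \ref{lem.genconformal} it suffices to check $h\in S$).

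I expect the reverse direction to be the main obstacle, and within it the passage from the single good point $x_0$ to an actual conformal measure: the weights $e^{\beta\Om(g,x_0)}$ may a priori grow exponentially in $|g|$, so controlling the normalizations $Z_n$ and showing that $hB_n\mathbin{\triangle}B_n$ carries asymptotically negligible $\mu_n$-mass along a suitable subsequence is exactly where subexponential growth of $G$ — through $\limsup_n Z_n^{1/n}\le1$ — is indispensable. The ``only if'' direction is comparatively soft: subexponential growth enters only to make the Borel--Cantelli series converge, and the argument would break for groups of exponential growth because $\sum_k|\{|g|=k\}|\,e^{-\eps k}$ then diverges for small $\eps$, which is consistent with Theorem~\ref{thm.all-closed-sets}.
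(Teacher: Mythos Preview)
Your proof is correct, and both directions take genuinely different (and in one case more elementary) routes than the paper's argument.

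For the ``only if'' direction, the paper first passes to an extremal $\beta$-conformal measure via Krein--Milman, uses ergodicity to conclude that the bounded $G$-invariant Borel function $x\mapsto\limsup_{g\to\infty}\beta\Om(g,x)/|g|$ is $\mu$-a.e.\ constant, and then derives a contradiction from the assumption that this constant is positive. Your Borel--Cantelli argument bypasses ergodicity entirely: the Markov bound $\mu(A_g)\le e^{-\eps|g|}$ holds for \emph{any} $\beta$-conformal $\mu$, and subexponential growth makes $\sum_g\mu(A_g)$ converge, so a single intersection over $\eps=1/k$ already produces the good point. This is cleaner and avoids the Krein--Milman step.

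For the ``if'' direction, the paper uses an Abel-type regularization: for each $s>0$ it forms the finite measure $m_s=\sum_{g\in G}e^{\beta\Om(g,x)-|g|s}\delta_{\phi_g(x)}$, checks directly that $\bigl|\int f\circ\phi_h\,e^{\beta\Om(h,\cdot)}\,dm_s-\int f\,dm_s\bigr|\le\|f\|_\infty\,m_s(X)\,|e^{|h|s}-1|$, and takes any weak-$*$ limit of $m_s(X)^{-1}m_s$ as $s\to0$. Your argument is a F{\o}lner-type ball averaging: you show $\limsup_n Z_n^{1/n}\le1$, exploit subexponentiality of the increasing sequence $(Z_n)$ to find indices with $Z_{n+r}/Z_{n-r}$ close to $1$, and run a diagonal extraction. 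Both arguments ultimately encode the same phenomenon---subexponential growth forces the weighted boundary contribution to be negligible---but the paper's continuous-parameter regularization avoids the subsequence bookkeeping, while your approach makes the F{\o}lner-like mechanism more transparent.
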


\begin{proof}
To prove the first direction, let us assume that there exists a $\beta$-conformal measure $\mu$. By the Krein-Milman Theorem we can assume that $\mu$ is an extremal $\beta$-conformal measure, which in particular implies that it is an ergodic measure for the action $G \actson X$. Using the cocycle condition \eqref{eq.cocycle} it is straightforward to see that the real Borel function
$$
x \mapsto \limsup_{g\to \infty} \ \frac{\beta \cdot \Omega(g, x)}{|g|}
$$
is bounded and $G$-invariant, and hence it must be equal to a constant $c \in \R$ for $\mu$-a.e. $x\in X$. Assume for a contradiction that $c>0$. Setting
$$
N_{g} = \left\{ x\in X \ | \ \beta \cdot \Omega(g,x) \geq |g| \cdot c/2  \right\}
$$
for each $g\in G$, it then follows that for any $n\in \N$ we have
$$
X= \bigcup_{ |g|>n } N_{g}
$$
up to a $\mu$-null set. Using \eqref{eq.conformal-measure} we get for $g\in G$ that
$$
1 = \int_{X} e^{\beta \cdot\Omega(g, x)} \ \mathrm{d} \mu(x) \geq  \int_{N_{g}} e^{\beta \cdot \Omega(g, x)} \ \mathrm{d} \mu (x)
\geq \int_{N_{g}} e^{|g| \cdot c/2  } \ \mathrm{d} \mu (x)
=\mu(N_{g}) e^{|g| \cdot c/2  }
$$
from which it follows that $\mu(N_{g}) \leq e^{-|g| \cdot c/2}$. Set
$$
G_{k}=  \{g\in G \ | \ |g|= k \} \; .
$$
Since $G$ is of subexponential growth we can choose $M\in \N$ big enough that $|G_{k}| \leq e^{k\cdot c/4}$ for all $k\geq M$. For any $n\geq M$ this implies that
$$
1 =\mu(X) \leq \sum_{|g| > n} \mu(N_{g}) \leq \sum_{k > n} |G_{k}|  e^{- k \cdot c/2}
\leq \sum_{k > n}  e^{-k \cdot c/4} \to 0 \text{ for } n\to \infty
$$
which is a contradiction. Hence $c\leq 0$, which proves one implication in the Lemma.

Assume for a proof of the other implication that there exists a point $x\in X$ which satisfies \eqref{eq.limsup}. For any $s>0$ we can therefore pick a $N\in \N$ such that both $\beta \cdot\Omega(g,x) \leq |g|s/4$ when $|g| \geq N$ and $|G_{n}| \leq e^{n\cdot s/4}$ when $n\geq N$. For $n\geq N$ then
$$
\sum_{|g| = n} e^{\beta \cdot \Omega(g,x)-n\cdot s} \leq e^{-n \cdot s/2} \; ,
$$
and hence
$$
\sum_{g\in G} e^{\beta \cdot \Omega(g, x)-|g| \cdot s} = \sum_{n=0}^{\infty} \sum_{|g| =n} e^{\beta \cdot \Omega(g, x)-n \cdot s} < \infty \ .
$$
Let $\delta_{y}$ denote the Dirac measure concentrated at a point $y\in X$, and recall that $\phi_{g}$ denotes the homeomorphism on $X$ defined by an element $g\in G$. For each $s>0$ we can now define a finite measure $m_{s}$ on $X$ by
$$
m_{s} = \sum_{g\in G} e^{\beta \cdot \Omega(g, x)-|g| \cdot s} \ \delta_{\phi_{g}(x)} \ .
$$
For any $f\in C(X)$ and $h\in G$ we then get that
\begin{align*}
\int_X (f \circ \phi_{h}) \cdot e^{\beta \cdot \Omega(h, \cdot)} \ \mathrm{d} m_s
 &= \sum_{g \in G} f \circ \phi_{hg}(x) e^{\beta \cdot \Omega(h, \phi_{g}(x))+ \beta \cdot \Omega(g, x)} e^{-|g| \cdot s}  \\
& =  \sum_{g' \in G} f \circ \phi_{g'}(x) e^{\beta \cdot \Omega(g',x)} e^{-|h^{-1}g'|\cdot s} \; .
\end{align*}
Since $|h^{-1}g|\in [|g|-|h|, |g|+|h|]$ for any $g\in G$ we get that
\begin{align*}
\left| \int_X (f \circ \phi_{h}) \cdot e^{\beta \cdot \Omega(h, \cdot)} \ \mathrm{d} m_s  - \int_X f  \ \mathrm{d} m_s \right|
&\leq \sum_{g \in G} \lVert f \rVert_{\infty} e^{\beta \cdot \Omega(g,x)} \left| e^{-|h^{-1}g|\cdot s}-e^{-|g|\cdot s}\right| \\
&\leq\sum_{g \in G} \lVert f \rVert_{\infty} e^{\beta \cdot  \Omega(g,x)-|g| \cdot s} \left| e^{|h| \cdot s}-1\right| \\
&\leq \|f\|_\infty \, m_s(X) \, \left|e^{|h|\cdot s} - 1 \right| \; ,
\end{align*}
and it follows from this that any condensation point of the probability measures $m_{s}(X)^{-1} m_{s}$ for $s\to 0$ is a $\beta$-conformal measure. This proves the other direction.
\end{proof}

\begin{lemma}\label{lem.KMS-spectrum-pol-growth}
Let $G$ be a finitely generated group with subexponential growth and word length function $g \mapsto |g|$. Let $G \actson X$  and let $\Om : G \times X \recht \R$ be any continuous $1$-cocycle. Then the KMS spectrum $K$ must have one of the following forms: $\{0\}$, $[0,+\infty)$, $(-\infty,0]$ or $\R$, and can be determined as follows.
\begin{itemlist}
\item We have $[0,+\infty) \subset K$ iff there exists an $x \in X$ such that $\limsup_{g \to \infty} \Om(g,x)/|g| \leq 0$.
\item We have $(-\infty,0] \subset K$ iff there exists an $x \in X$ such that $\liminf_{g \to \infty} \Om(g,x)/|g| \geq 0$.
\end{itemlist}
\end{lemma}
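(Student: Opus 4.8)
The statement combines two things: a characterization of when each of the two half-lines is contained in the KMS spectrum $K$, and the claim that these are the only possibilities for $K$ (together with $\{0\}$ and $\R$). I would derive everything from Lemma~\ref{lem.subex}, which says that $\beta \in K$ iff there is an $x \in X$ with $\limsup_{g \to \infty} \beta\,\Om(g,x)/|g| \leq 0$.

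First, I would observe the elementary scaling fact: for $\beta > 0$ the condition $\limsup_{g \to \infty} \beta\,\Om(g,x)/|g| \leq 0$ is equivalent to $\limsup_{g \to \infty} \Om(g,x)/|g| \leq 0$, since multiplying by the positive constant $\beta$ does not change the sign of a $\limsup$; and for $\beta < 0$ it is equivalent to $\limsup_{g \to \infty} (-\Om(g,x))/|g| \leq 0$, i.e. $\liminf_{g \to \infty} \Om(g,x)/|g| \geq 0$. Hence: $\beta \in K$ for \emph{some} $\beta > 0$ iff it holds for \emph{all} $\beta > 0$ iff there is an $x$ with $\limsup_{g\to\infty}\Om(g,x)/|g| \leq 0$; this is exactly the first bullet. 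Symmetrically, $\beta \in K$ for some (all) $\beta < 0$ iff there is an $x$ with $\liminf_{g\to\infty}\Om(g,x)/|g| \geq 0$; this is the second bullet. Note the existence of the point realizing the $\limsup$ condition may a priori differ from the one realizing the $\liminf$ condition, but that is fine, since the two bullets speak about the two half-lines independently.

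Next, for the shape of $K$: since $G$ is amenable (subexponential, hence amenable) there is always a $G$-invariant probability measure, which is $0$-conformal, so $0 \in K$ always. Combining this with the previous paragraph, $K \cap (0,+\infty)$ is either empty or all of $(0,+\infty)$, and $K \cap (-\infty,0)$ is either empty or all of $(-\infty,0)$. The four combinations of these two alternatives give precisely $\{0\}$, $[0,+\infty)$, $(-\infty,0]$, and $\R$. Finally I would double-check consistency with the closedness statement already noted after Lemma~\ref{lemma.states-conformal} (each of the four sets is indeed closed), and with Lemma~\ref{lem.genconformal}/the finitely generated reduction — here $G$ is assumed finitely generated so the word length and Lemma~\ref{lem.subex} apply directly.

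\textbf{Main obstacle.} There is essentially no obstacle beyond Lemma~\ref{lem.subex}; the only point requiring a moment's care is that the $x$ witnessing $[0,+\infty) \subset K$ and the $x$ witnessing $(-\infty,0] \subset K$ need not be the same point, so one should not try to merge the two bullets into a single statement about one $x$. Everything else is the trivial sign-bookkeeping of $\limsup(\lambda a_g) = \lambda \limsup a_g$ for $\lambda > 0$ and $\limsup(-a_g) = -\liminf a_g$.
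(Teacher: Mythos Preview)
Your proposal is correct and follows essentially the same route as the paper: both derive everything from Lemma~\ref{lem.subex} via the sign-scaling observation that condition~\eqref{eq.limsup} for some $\beta' \neq 0$ is equivalent to the same condition for all $\beta$ of the same sign. The only (inessential) difference is in handling $\beta = 0$: you invoke amenability to produce an invariant measure, whereas the paper simply notes that \eqref{eq.limsup} is trivially satisfied for $\beta = 0$ (since $\limsup 0 = 0$), which via Lemma~\ref{lem.subex} already gives $0 \in K$ without appealing to amenability.
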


\begin{proof}
If the condition in \eqref{eq.limsup} holds for a $\beta' \neq 0$ then it also holds for all $\beta\in \R$ with the same sign as $\beta '$. Since \eqref{eq.limsup} is always true for $\beta = 0$, the statement therefore follows from Lemma \ref{lem.subex} and Lemma \ref{lemma.states-conformal}.
\end{proof}

We can now prove Theorem \ref{theorem.A}.

\begin{proof}[Proof of Theorem \ref{theorem.A}]
Let $(G(n))_{n\in \N} \subset G$ be an increasing sequence of finitely generated subgroups with union $G$. Denote by $K_n$ the KMS spectrum of the action and $1$-cocycle restricted to $G(n)$. Denote by $K$ the KMS spectrum of $G \actson X$ and $\Om :G\times X \to \R$. By \eqref{eq.conformal-measure} then $K = \bigcap_n K_n$ and the conclusion follows from Lemma \ref{lem.KMS-spectrum-pol-growth}.
\end{proof}

It follows from Theorem 6.8 in \cite{CT} that for an action $\Z \actson X$ which is uniquely ergodic, the KMS spectrum is either $\{0\}$ or $\R$. In the following we will argue that the same is true when $G=\Z^{n}$ for some $n\in \N$.

Assume that $G \actson X$ is an action by homeomorphisms and that $\Om : G \times X \recht \R$ is a continuous $1$-cocycle. Whenever $\mu$ is a $G$-invariant probability measure on $X$, the formula
$$\Om_\mu : G \recht \R : \Om_\mu(g) = \int_X \Om(g,x) \, d\mu(x)$$
defines a group homomorphism from $G$ to $\R$.

\begin{proposition}
Let $G=\Z^n$ and let $G \actson X$ be an action by homeomorphisms of a compact space $X$. Let $\Om : G \times X \recht \R$ be any continuous $1$-cocycle.
\begin{enumlist}
\item If there exists an ergodic $G$-invariant probability measure $\mu$ on $X$ such that $\Om_\mu = 0$, then the KMS spectrum equals $\R$.
\item If $G \actson X$ is uniquely ergodic, with unique $G$-invariant probability measure $\mu$, then the KMS spectrum $K$ is either $\{0\}$ or $\R$. We have $K = \R$ iff $\Om_\mu = 0$, and we have $K = \{0\}$ iff $\Om_\mu \neq 0$.
\end{enumlist}
\end{proposition}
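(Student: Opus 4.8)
The overall plan is to reduce, in both parts, to the growth dichotomy of Lemma~\ref{lem.KMS-spectrum-pol-growth} (valid here since $\Z^n$ has polynomial, hence subexponential, growth) together with the description of conformal measures in Lemma~\ref{lem.subex}. Since $0$ always lies in the KMS spectrum $K$ (the invariant measure $\mu$ is $0$-conformal), and since by Lemma~\ref{lem.KMS-spectrum-pol-growth} $K$ is one of $\{0\}$, $[0,+\infty)$, $(-\infty,0]$, $\R$, it suffices in each case to decide whether $[0,+\infty)\subset K$ and whether $(-\infty,0]\subset K$, which by that lemma amounts to finding a point $x$ with $\limsup_{g\to\infty}\Om(g,x)/|g|\le 0$, respectively with $\liminf_{g\to\infty}\Om(g,x)/|g|\ge 0$.

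For part~1, write $f_i=\Om(e_i,\cdot)\in C(X)$, so $\Om_\mu=0$ means $\int f_i\,d\mu=0$ for all $i$. The ergodic-theoretic heart is to show that the conditional expectation $E[f_i\mid\cI]$ onto the $\phi_{e_i}$-invariant $\sigma$-algebra $\cI$ vanishes $\mu$-a.e. Here the cocycle identity \eqref{eq.cocycle} enters: it gives $f_i\circ\phi_{e_j}-f_i=f_j\circ\phi_{e_i}-f_j$, a $\phi_{e_i}$-coboundary; since $E[\cdot\mid\cI]$ kills $\phi_{e_i}$-coboundaries and commutes with the commuting transformations $\phi_{e_j}$, the function $E[f_i\mid\cI]$ is $\phi_{e_j}$-invariant for every $j$, hence $\Z^n$-invariant, hence constant $=\int f_i\,d\mu=0$ by ergodicity. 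The von Neumann mean ergodic theorem then yields $\|\Om(Ne_i,\cdot)\|_{L^2(\mu)}=o(|N|)$, and decomposing an arbitrary cocycle value along the staircase path $\Om(g,x)=\sum_i\Om(m_ie_i,\phi_{y_i}(x))$ (where $g=\sum_i m_ie_i$ and $y_i=\sum_{k<i}m_ke_k$), splitting the sum according to whether $|m_i|$ is a small or a large fraction of $|g|$, upgrades this to $\|\Om(g,\cdot)\|_{L^2(\mu)}=o(|g|)$ as $|g|\to\infty$. Finally, applying the multidimensional (subadditive) ergodic theorem to the subadditive cocycles $g\mapsto\Om(g,\cdot)^{+}$ and $g\mapsto(-\Om(g,\cdot))^{+}$ — whose time constants vanish by the $L^1$-consequence of the previous estimate — one concludes $\Om(g,x)/|g|\to 0$ as $|g|\to\infty$ for $\mu$-a.e.\ $x$. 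For such an $x$ and any $\beta\in\R$ one has $\limsup_{g\to\infty}\beta\Om(g,x)/|g|=0$, so Lemma~\ref{lem.subex} produces a $\beta$-conformal measure; hence $K=\R$.

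For part~2, unique ergodicity lets us replace ``$\mu$-a.e.'' by ``uniformly'', which is what is needed to control every point. The key claim is that $\tfrac1N\Om(Ne_i,\cdot)\to\Om_\mu(e_i)$ uniformly on $X$, for each $i$. By the standard characterization of uniform convergence of Birkhoff averages this follows once $\int f_i\,d\lambda=\Om_\mu(e_i)$ for \emph{every} $\phi_{e_i}$-invariant probability measure $\lambda$ on $X$; given such $\lambda$, averaging it over the complementary subgroup $\langle e_j:j\neq i\rangle$ yields, by a F\o lner argument, a $\Z^n$-invariant measure, which equals $\mu$ by unique ergodicity, while the coboundary identity $f_i\circ\phi_{e_j}-f_i=f_j\circ\phi_{e_i}-f_j$ combined with $\phi_{e_i}$-invariance of $\lambda$ shows that averaging does not change $\int f_i$; thus $\int f_i\,d\lambda=\int f_i\,d\mu=\Om_\mu(e_i)$. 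Now, if $\Om_\mu=0$ then $\mu$ is ergodic with $\Om_\mu=0$, so part~1 gives $K=\R$. If $\Om_\mu\neq 0$, pick $i$ with $c:=\Om_\mu(e_i)\neq 0$; applying the uniform convergence (also at the moving points $\phi_{-Ne_i}(x)$, via $\Om(-Ne_i,x)=-\Om(Ne_i,\phi_{-Ne_i}(x))$) shows that for \emph{every} $x\in X$ one has $\limsup_{g\to\infty}\Om(g,x)/|g|\ge|c|>0$ and $\liminf_{g\to\infty}\Om(g,x)/|g|\le-|c|<0$; by Lemma~\ref{lem.KMS-spectrum-pol-growth} this forces $[0,+\infty)\not\subset K$ and $(-\infty,0]\not\subset K$, so $K=\{0\}$. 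The two equivalences in~(2) follow, and (1) is the case $\Om_\mu=0$ handled above.

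The main obstacle is the last step of part~1: passing from the mean-ergodic $L^2$ bound to a.e.\ sublinearity of $\Om(g,x)$ \emph{simultaneously over all directions} $g\to\infty$, not merely along the coordinate rays (where Birkhoff's theorem alone suffices, exactly as in the case $G=\Z$). This is what the multidimensional subadditive ergodic theorem is designed for; the points to verify are the integrability/covariance hypotheses (immediate, since $|\Om(g,\cdot)|\le|g|\max_i\|f_i\|_\infty$) and that the vanishing of $\frac1{|g|}\int|\Om(g,\cdot)|\,d\mu$, uniformly in the direction of $g$, forces the limit shape to be trivial. Everything in part~2 is soft once the uniform-convergence claim is established.
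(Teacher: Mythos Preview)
Your Part~2 argument is correct and takes a genuinely different route from the paper's. The paper argues by contraposition: assuming $K\neq\{0\}$, it picks $x$ with (say) $\limsup_{g\to\infty}\Om(g,x)/|g|\le 0$, and for each nontrivial $a\in G$ considers the weak$^*$ closed convex hull $M$ of all limit points of the Ces\`aro sequences $\tfrac1m\sum_{k=0}^{m-1}\delta_{a^kg\cdot x}$ with $g\in G$ arbitrary; one checks $\int\Om(a,\cdot)\,d\nu\le 0$ for every $\nu\in M$, and since $M$ is $G$-invariant (here commutativity enters) it must contain $\mu$, whence $\Om_\mu(a)\le 0$ for every $a$ and thus $\Om_\mu=0$. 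Your route instead establishes uniform convergence $\tfrac1N\Om(Ne_i,\cdot)\to\Om_\mu(e_i)$ via the observation that every $\phi_{e_i}$-invariant probability measure $\lambda$ gives $f_i$ the same integral as $\mu$: F{\o}lner-averaging $\lambda$ over the complementary coordinates leaves $\int f_i\,d\lambda$ unchanged (by the coboundary identity together with $\phi_{e_i}$-invariance of $\lambda$) and produces a fully $G$-invariant limit, which is $\mu$ by unique ergodicity. Both proofs exploit commutativity; yours is more direct, while the paper's avoids the Oxtoby-type criterion for uniform convergence of Birkhoff averages.

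For Part~1 there is a gap in the final step as you formulate it. The ``multidimensional (subadditive) ergodic theorem'' you invoke for the families $g\mapsto\Om(g,\cdot)^+$ and $g\mapsto(-\Om(g,\cdot))^+$ is not a standard result in the form you need: the available $\Z^d$ subadditive ergodic theorems (Akcoglu--Krengel and descendants) are for processes indexed by boxes, or yield convergence along a fixed F{\o}lner sequence, not a.e.\ convergence of $F(g,x)/|g|$ simultaneously over \emph{all} $g$ with $|g|\to\infty$. What does give exactly this, directly for the additive cocycle $\Om$ under the hypothesis $\Om_\mu=0$, is the ergodic theorem for additive $\Z^d$-cocycles of Boivin and Derriennic \cite{BD89}, and that is precisely what the paper cites, in a single line. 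Your conditional-expectation and $L^2$ estimates are correct, but once \cite{BD89} is invoked they become unnecessary; and if you wish to avoid citing \cite{BD89}, the subadditive detour does not shortcut the hard part --- you would essentially be reproving that theorem.
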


\begin{proof}
1.\ By \cite[Theorem 1]{BD89}, we have that
$$\lim_{g \recht \infty} \frac{\Om(g,x)}{|g|} = 0 \quad\text{for $\mu$-a.e.\ $x \in X$.}$$
So, we can pick $x \in X$ such that $\Om(g,x) / |g| \recht 0$. By Lemma \ref{lem.KMS-spectrum-pol-growth}, the KMS spectrum equals $\R$.

2.\ Denote by $K$ the KMS spectrum. If $\Om_\mu = 0$, it follows from the previous point that $K = \R$. Assume that $K \neq \{0\}$. We prove that $\Om_\mu = 0$. By Lemma \ref{lem.KMS-spectrum-pol-growth}, we find an $x \in X$ such that either $\limsup_{g \to \infty} \Om(g,x)/|g| \leq 0$ or $\liminf_{g \to \infty} \Om(g,x)/|g| \geq 0$. By symmetry, it suffices to deal with the first case.

Fix a nontrivial element $a \in G \setminus \{e\}$. We prove that $\Om_\mu(a) \leq 0$. Denote by $M$ the weak$^*$ closed convex hull of all probability measures $\nu$ on $X$ that can be obtained as a weak$^*$ limit point of a sequence of the form
$$\nu_m = \frac{1}{m} \sum_{k=0}^{m-1} \delta_{a^k g \cdot x}$$
where $g \in G$ is arbitrary.

Let $g \in G$ and let $\nu$ be any weak$^*$ limit point of the sequence $(\nu_m)_{m \in \N}$. Since
$$\Om(a^m g,x) = \Om(g,x) + \sum_{k=0}^{m-1} \Om(a,a^k g \cdot x)$$
and since there exists a $C > 0$ with $C^{-1} m - C \leq |a^m g| \leq C m + C$ for all $m \in \N$, it follows that
$$\limsup_{m \recht +\infty} \frac{1}{m} \sum_{k=0}^{m-1} \Om(a,a^k g \cdot x) \leq 0 \; .$$
We conclude that $\int_X \Om(a,y) \, d\nu(y) \leq 0$. Therefore $\int_X \Om(a,y) \, d\nu(y) \leq 0$ for all $\nu \in M$. By construction, $M$ is weak$^*$ closed, convex and globally $G$-invariant. So, $\mu \in M$ and we get that $\Om_\mu(a) \leq 0$.

Since $a \in G \setminus \{e\}$ was arbitrary and $\Om_\mu(a^{-1}) = - \Om_\mu(a)$, it follows that $\Om_\mu = 0$.
\end{proof}

\section{Group actions with prescribed KMS spectrum}

The goal of this section is to prove Theorem \ref{thm.all-closed-sets}.
We start from a group $\Lambda = \bigoplus_{n \in \N} \Lambda_n$ that is the direct sum of infinitely many nontrivial finite groups. We consider the minimal action of $\Lambda$ by translations on $X = \prod_{n\in \N} \Lambda_n$. There is a natural family of infinite product $1$-cocycles $\Lambda \times X \to \R$ such that for every $\beta \in \R$, there is a unique $\beta$-conformal probability measure $\mu_\beta$ on $X$. For any continuous function $H : X \to (0,+\infty)$, we consider the function
\begin{equation} \label{eq.kmsspectrumfunc}
\vphi : \beta \mapsto \int_{X} H^{\beta} \ \mathrm{d} \mu_{\beta} \; .
\end{equation}
We prove in Proposition \ref{prop.generic-construction} that the set $\{\beta \in \R \mid \vphi(\beta) = 1 \}$ arises as the KMS spectrum of the natural action of the wreath product $\Lambda \wr \Z$ on $X^\Z$. In Lemma \ref{lem.main-technical}, we prove that basically any continuous function $\vphi$ with $\vphi(0) = 1$ can be written as in \eqref{eq.kmsspectrumfunc}. Altogether, this then leads to the proof of the following more explicit version of the first part of Theorem \ref{thm.all-closed-sets}.

\begin{theorem}\label{thm.all-closed-sets-amenable}
Let $(\Lambda_n)_{n \in \N}$ be a sequence of nontrivial finite groups. Put $\Lambda = \bigoplus_{n \in \N} \Lambda_n$ and consider the product action of $\Lambda$ on $X = \prod_{n\in \N} \Lambda_n$ by translation.

The natural action of the wreath product $\Gamma = \Lambda \wr \Z = \Lambda^{(\Z)} \rtimes \Z$ on $Y = X^\Z$ is minimal, uniquely ergodic, topologically free and has the following universality property: for every closed subset $K \subset \R$ with $0 \in K$, there exists a continuous $1$-cocycle $\Gamma \times Y \recht \R$ whose KMS spectrum equals $K$, and which has a unique $\beta$-KMS state for every $\beta \in K$.
\end{theorem}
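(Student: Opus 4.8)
The plan is to break the theorem into two halves: first establishing the dynamical properties of the action $\Gamma \actson Y$, and then establishing the universality statement about realizable KMS spectra. For the first half, I would start by describing the base action $\Lambda \actson X$ by translation on $X = \prod_n \Lambda_n$ — this is visibly minimal (every orbit is dense since $\Lambda = \bigoplus_n \Lambda_n$ acts by changing finitely many coordinates, and any two points agree off a finite set up to translation) and uniquely ergodic with the product of the normalized counting measures as the unique invariant measure. Then for the wreath product $\Gamma = \Lambda \wr \Z$ acting naturally on $Y = X^\Z$ — with $\Lambda^{(\Z)}$ acting coordinatewise by the base action and $\Z$ acting by shift — I would check minimality directly (the shift mixes coordinates enough that combined with the coordinatewise translations one gets density of orbits), topological freeness (a point $y \in Y$ with all coordinates having trivial $\Lambda$-isotropy and aperiodic under the shift has trivial $\Gamma$-isotropy; such points are dense since they are a dense $G_\delta$), and unique ergodicity. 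For unique ergodicity I would invoke the product/shift structure: any invariant measure must be shift-invariant and its marginals must be invariant under the base action, forcing the measure to be the infinite product $\bigotimes_{k \in \Z} \nu$ where $\nu$ is the unique invariant measure on $X$.

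For the second and main half, I would follow the roadmap laid out in the excerpt itself. The key inputs, which I would state and use as black boxes at this stage of the plan, are: (i) the existence, for a natural family of infinite-product $1$-cocycles $\Lambda \times X \to \R$, of a unique $\beta$-conformal probability measure $\mu_\beta$ on $X$ for every $\beta$ (this is the content promised before \eqref{eq.kmsspectrumfunc}); (ii) Proposition \ref{prop.generic-construction}, which says that for a continuous $H : X \to (0,+\infty)$, the set $\{\beta \mid \vphi(\beta) = 1\}$ with $\vphi$ as in \eqref{eq.kmsspectrumfunc} is exactly the KMS spectrum of a corresponding $1$-cocycle on $Y = X^\Z$ for the $\Gamma$-action; and (iii) Lemma \ref{lem.main-technical}, which says essentially any continuous $\vphi$ with $\vphi(0) = 1$ is of the form \eqref{eq.kmsspectrumfunc} for a suitable choice of the finite groups $\Lambda_n$ and the function $H$. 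Given these, the proof assembles as follows: take a closed set $K \subset \R$ with $0 \in K$; construct a continuous function $\vphi : \R \to (0,+\infty)$ with $\vphi(0) = 1$ whose zero set of $\vphi - 1$ is exactly $K$ — for instance $\vphi(\beta) = 1 + \operatorname{dist}(\beta, K)$, or a smoothed variant, so that $\vphi(\beta) = 1 \iff \beta \in K$; feed $\vphi$ into Lemma \ref{lem.main-technical} to obtain the groups $\Lambda_n$ and the function $H$; feed $H$ into Proposition \ref{prop.generic-construction} to obtain the $1$-cocycle $\Gamma \times Y \to \R$ with KMS spectrum $\{\beta \mid \vphi(\beta) = 1\} = K$.

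For the uniqueness of the $\beta$-KMS state, I would argue that for each $\beta \in K$ there is a unique $\beta$-conformal measure on $Y$, and then invoke Lemma \ref{lemma.cstaralgeba} (having first checked that all $\beta$-conformal measures on $Y$ are essentially free, which follows from topological freeness of the action together with the fact that any conformal measure, being quasi-invariant, must give zero mass to the closed nowhere-dense set of points with nontrivial isotropy — or more carefully, one checks the conformal measure charges only aperiodic configurations). The uniqueness of the conformal measure on $Y = X^\Z$ should follow from Lemma \ref{lemma.prod-measure} applied iteratively to the product structure, combined with the uniqueness of $\mu_\beta$ on each $X$-factor and the shift-equivariance forcing all factors to carry the same measure; this is presumably also packaged inside Proposition \ref{prop.generic-construction}.

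The main obstacle I expect is entirely inside the two cited results — Proposition \ref{prop.generic-construction} and especially Lemma \ref{lem.main-technical} — rather than in the assembly. The delicate point in Lemma \ref{lem.main-technical} is the flexibility of the map $H \mapsto \vphi$: one needs that by choosing the finite groups $\Lambda_n$ (hence the measures $\mu_\beta$, which depend on $\beta$ in a way governed by the product cocycle) and the function $H$ appropriately, the resulting $\vphi(\beta) = \int_X H^\beta \, d\mu_\beta$ can be made to match an essentially arbitrary prescribed continuous function with value $1$ at $0$; controlling how $\mu_\beta$ varies with $\beta$ and decoupling that from the $H^\beta$ factor is the technical heart. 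In Proposition \ref{prop.generic-construction}, the obstacle is translating the condition "$\vphi(\beta) = 1$" — an integral identity on $X$ — into the genuine existence of a $\beta$-conformal probability measure on $Y^\Z$ for the wreath product action, which requires understanding precisely how a conformal measure on $Y$ restricts along the $\Z$-direction and uses the transfer-operator/eigenvalue interpretation of $\vphi(\beta)$ (the value $1$ being the relevant spectral radius / normalization making an infinite product converge to a probability measure). For the present statement, though, these are cited, so the proof I would write is just the short assembly argument plus the verification of the dynamical properties of $\Gamma \actson Y$.
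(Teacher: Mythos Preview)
Your overall assembly is exactly the paper's: choose a function $\vphi$ with $\{\vphi=1\}=K$, invoke Lemma~\ref{lem.main-technical} for realizability, apply Proposition~\ref{prop.generic-construction} to get the cocycle on $Y=X^\Z$ with KMS spectrum $K$ and unique $\beta$-conformal measure, then use Lemma~\ref{lemma.cstaralgeba} to pass to unique KMS states. Essential freeness of the conformal measures is part of the \emph{conclusion} of Proposition~\ref{prop.generic-construction}, so you need not argue it separately; your proposed deduction from topological freeness (``quasi-invariant measures give zero mass to closed nowhere-dense sets'') is in fact false in general and should be dropped. The paper simply asserts minimality, unique ergodicity and topological freeness ``by construction'', and your sketches for these are adequate.

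The one step that would actually fail is your choice of $\vphi$. Lemma~\ref{lem.main-technical} does \emph{not} assert that an arbitrary continuous $\vphi$ with $\vphi(0)=1$ is realizable; it only covers functions of the specific shape
\[
\vphi(\beta)=1+\frac{a^\beta-1}{a^\beta+1}\,\zeta(\beta),\qquad a>1,\ \zeta\in C_0(\R,[-1/2,1/2]).
\]
Your candidate $\vphi(\beta)=1+d(\beta,K)$ is unbounded and not of this form, and ``a smoothed variant'' is too vague to land in this class. The paper's fix is to take $\zeta(\beta)=\dfrac{d(\beta,K)}{2(1+\beta^2)}$: since $0\in K$ one has $d(\beta,K)\le|\beta|$, hence $\zeta\in C_0(\R,[0,1/2])$, and then $\vphi(\beta)=1$ iff $\beta=0$ or $\zeta(\beta)=0$, i.e.\ iff $\beta\in K$. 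One further misreading: the finite groups $\Lambda_n$ are \emph{given} in the theorem, not chosen by you; Lemma~\ref{lem.main-technical} is stated for an arbitrary prescribed sequence, which is precisely what the universality claim needs.
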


We then provide in Proposition \ref{prop.cantorset} a construction to realize sets of the form
$$\{\beta \in \R \mid \vphi_1(\beta) = 1/2 \;\;\text{and}\;\;\vphi_2(\beta) = 2 \} \; ,$$
with $\vphi_i$ as in \eqref{eq.kmsspectrumfunc}, as KMS spectrum of a natural action of a free product group. We prove in Lemma \ref{lemma.realizablefractions} that any closed subset $K \subset \R$ with $0 \not\in K$ can be obtained in this way. This then leads to the proof of the second part of Theorem \ref{thm.all-closed-sets} in Section \ref{sec.all-closed-sets-nonamenable}.

\subsection{Realizable functions}

\begin{definition}\label{def.realizable}
We say that a function $\varphi : \R \recht (0,+\infty)$ is \emph{realizable} by the action $\Lambda \actson X$ if there exists a continuous $1$-cocycle $\Om : \Lambda \times X \recht \R$ with the following properties. For every $\beta \in \R$, there is a unique $\be$-conformal probability measure $\mu_\be$ on $X$ and there exists a continuous function $H : X \recht (0,+\infty)$ such that
$$\vphi(\beta) = \int_X H(x)^\be \; d\mu_\be(x) \; .$$
\end{definition}

A key point is to prove that basically arbitrary continuous functions are realizable by actions of product groups. The following lemma is thus important for us.

\begin{lemma} \label{lem.main-technical}
Let $(\Lambda_n)_{n \in \N}$ be a sequence of nontrivial finite groups. Put $\Lambda = \bigoplus_{n\in \N} \Lambda_n$ and consider the product action of $\Lambda$ on $X = \prod_{n\in \N} \Lambda_n$ by translation. Whenever $a > 1$ and $\zeta \in C_0(\R,[-1/2,1/2])$, the function
$$\varphi(\beta) = 1 + \frac{a^\beta - 1}{a^\beta + 1} \, \zeta(\beta)$$
is realizable by $\Lambda \actson X$.
\end{lemma}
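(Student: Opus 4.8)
The plan is to set up a family of product $1$-cocycles on $\Lambda \actson X = \prod_n \Lambda_n$ indexed by a sequence of real parameters, compute the unique $\beta$-conformal measure explicitly, and then choose the parameters and the function $H$ so that the integral $\int_X H^\beta \, d\mu_\beta$ matches the prescribed $\varphi$. Concretely, for each $n$ fix an element $s_n \in \Lambda_n \setminus \{e\}$ and a real number $t_n$, and define a $1$-cocycle $\Om_n : \Lambda_n \times \Lambda_n \to \R$ on the single factor by letting $\Om_n(s_n, \cdot)$ take the value $t_n$ on the part of $\Lambda_n$ where the translation moves towards a distinguished configuration and $-t_n$ elsewhere — more precisely, pick a $1$-cocycle that is a coboundary of a function $h_n : \Lambda_n \to \R$ with prescribed oscillation, so that by Lemma \ref{lemma.cohomologous} the $\beta$-conformal measure on the factor is $\mu_\beta^{(n)}(\{k\}) = e^{\beta h_n(k)} / Z_n(\beta)$ with $Z_n(\beta) = \sum_{k \in \Lambda_n} e^{\beta h_n(k)}$. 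Summing these over $n$ (after rescaling the $h_n$ so the sum defining the infinite-product cocycle converges uniformly, which forces $\sup_k |h_n(k)| \to 0$) gives a continuous $1$-cocycle $\Om : \Lambda \times X \to \R$, and by Lemma \ref{lemma.prod-measure} (iterated, together with a tightness/limit argument for the infinite product) there is for every $\beta$ a unique $\beta$-conformal measure $\mu_\beta = \bigotimes_n \mu_\beta^{(n)}$ on $X$.

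Next I would compute $\varphi$. Take $H$ itself of product form, or more simply supported on finitely many coordinates: if $H(x)$ depends only on the first coordinate $x_1 \in \Lambda_1$ and we single out two elements, the integral $\int_X H^\beta \, d\mu_\beta$ reduces to a ratio of the shape $\sum_k H(k)^\beta e^{\beta h_1(k)} / Z_1(\beta)$. Choosing $\Lambda_1$ of size at least two, $h_1 \equiv 0$, and $H$ taking only two values $c_0, c_1 > 0$ on a partition of $\Lambda_1$ into sets of relative counting weights $p$ and $1-p$, one gets $\varphi(\beta) = p\, c_0^\beta + (1-p)\, c_1^\beta$. Normalizing so that $c_0 c_1 = 1$ and writing $c_1 = a^{1/2}$, $c_0 = a^{-1/2}$, $p = 1/2$ yields $\varphi(\beta) = \cosh((\beta/2)\log a)$, which is the $\zeta \equiv 1$, up-to-normalization model; the point is that by using more coordinates and allowing $H$ to depend on coordinates $1, \dots, m$ one can build, via products and convex combinations of such elementary pieces, an arbitrarily good approximation on compact $\beta$-intervals to the target $1 + \tfrac{a^\beta - 1}{a^\beta + 1}\zeta(\beta)$. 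The factor $\tfrac{a^\beta-1}{a^\beta+1} = \tanh(\tfrac{\beta}{2}\log a)$ is exactly what a two-point weight with ratio $a$ produces after the symmetric normalization, which is why this particular normal form is the natural one: it already has $\varphi(0) = 1$ built in, matching the constraint that $\mu_0$ is the (unique) invariant measure and $H^0 = 1$.

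The one genuinely delicate point is that we need $\varphi$ realized \emph{exactly}, not just approximately, and with a \emph{single} continuous $H$ on the infinite product $X$ and a \emph{single} convergent infinite-product cocycle — not a limit of finite approximations. So the real work is an infinite-product bookkeeping argument: write $\zeta$ as a uniformly convergent series (on all of $\R$, since $\zeta \in C_0$) of functions each of which is realized exactly by finitely many coordinates of the form above, with geometrically decaying sup-norms; assign disjoint blocks of coordinates $n$ to successive terms; check that the resulting $h_n$ have $\sup_k |h_n(k)|$ summable so the infinite-product $1$-cocycle is a well-defined continuous cocycle and Lemma \ref{lemma.prod-measure} applies to give uniqueness of $\mu_\beta$; and check that the correspondingly chosen $H$ (a convergent product $\prod_n H_n$ of the elementary factors, with $H_n \to 1$ fast enough to converge to a strictly positive continuous function) satisfies $\int_X H^\beta \, d\mu_\beta = \prod_n \big(\int H_n^\beta \, d\mu_\beta^{(n)}\big)$, and that this product telescopes/multiplies out to exactly $1 + \tanh(\tfrac{\beta}{2}\log a)\,\zeta(\beta)$. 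The main obstacle is thus not any single estimate but organizing the combination of the elementary two-point building blocks — likely additively in the exponent, i.e.\ realizing $\log \varphi$ rather than $\varphi$, so that products of $H_n$ turn into sums and the $C_0$ series for $\zeta$ (or for a suitable transform of it) can be fed in term by term — so that the \emph{exact} identity holds while all convergence conditions are met. I expect the bound $\zeta \in [-1/2,1/2]$ to be exactly what guarantees $\tfrac{a^\beta-1}{a^\beta+1}\zeta(\beta)$ stays in $(-1,1)$ so that $\varphi$ is strictly positive and each building block is a legitimate (strictly positive) weight.
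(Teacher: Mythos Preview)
Your overall architecture is the same as the paper's: a product $1$-cocycle built from coboundaries on finite blocks, the unique $\beta$-conformal measure as the product $\mu_\beta=\bigotimes_n\mu_\beta^{(n)}$, and $H=\prod_n H_n$ so that $\int_X H^\beta\,d\mu_\beta=\prod_n\int H_n^\beta\,d\mu_\beta^{(n)}$. Where your proposal stops short is precisely the part you flag as ``the real work'': turning an \emph{approximation} into an \emph{exact} realization. Two concrete pieces are missing.

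First, you never pin down which functions are realizable on a finite block. On a finite set $F$ with probability weights $\mu(h)$, the realizable functions are exactly $\beta\mapsto\sum_{h\in F}\mu(h)^\beta H(h)^\beta/\sum_{g\in F}\mu(g)^\beta$, i.e.\ ratios of finite sums of exponentials. The paper's engine is a density lemma: functions of the form $\sum a_i^\beta/\sum b_j^\beta$ (with the $b_j$'s flanking the $a_i$'s) are uniformly dense in $C_0(\R,[0,\infty))$. This is what lets one approximate an arbitrary $\zeta\in C_0(\R,[-1/2,1/2])$ by something realizable on a single block $\prod_{i=l}^{l'}\Lambda_i$ of a \emph{prescribed} cardinality --- your two-point model $p\,c_0^\beta+(1-p)c_1^\beta$ is far too rigid (and your $\cosh$ example is unbounded, hence not of the target shape at all).

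Second, your plan ``write $\zeta$ as a uniformly convergent series and realize each term on disjoint blocks'' does not match the multiplicative structure: if $\zeta=\sum_k\zeta_k$ then $1+P\zeta$ is not $\prod_k(1+P\zeta_k)$, and passing to $\log\varphi$ does not help because the finite-block realizable functions have no tractable logarithm. The paper instead runs an iterative multiplicative correction: with a sequence $a_k\searrow 1$ satisfying $\sum(a_k-1)<\infty$ and $P_k(\beta)=(a_k^\beta-1)/(a_k^\beta+1)$, one inductively chooses $\zeta_k\in C_0(\R,[-1/2,1/2])$ so that each $1+P_k\zeta_k$ is exactly realizable on the next block and the partial products $\psi_k=\prod_{j\le k}(1+P_j\zeta_j)$ converge uniformly to $\varphi$. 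The point is that $(\varphi-\psi_{k-1})/(\psi_{k-1}P_k)$ lies in $C_0(\R,[-1/2,1/2])$ (this is where the bound $|\zeta|\le 1/2$ is used, together with $\|P_k/P_{k+1}\|_\infty<\infty$), so the density lemma supplies a realizable $\zeta_k$ approximating it; the residual error is then pushed to the next step and decays geometrically. Without this telescoping mechanism and the underlying density statement, your outline does not close.
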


Before proving Lemma \ref{lem.main-technical}, we need some preparation.

\begin{lemma}\label{lem.cR}
Define the subset $\cR \subset C_0(\R,(0,+\infty))$ consisting of the functions of the form
\begin{equation} \label{eq.-r}
r : \R \recht (0,+\infty) : r(\beta) = \frac{\sum_{i=1}^N a_i^\beta}{ \sum_{j=1}^M b_j^\beta }
\end{equation}
where $N,M \in \N$, $a_i,b_j \in (0,+\infty)$ and $\max \{b_j\} > \max\{a_i\}$ and $\min \{b_j\} < \min\{a_i\}$ to ensure that $r$ tends to zero at infinity.

Then, $\cR $ is uniformly dense in $C_0(\R,[0,+\infty))$.
\end{lemma}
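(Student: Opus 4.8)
The plan is to show that $\cR$ is closed under the natural operations that let us build a Stone--Weierstrass-type argument, and then to take limits carefully so as to stay inside $C_0(\R,[0,+\infty))$. First I would record the elementary closure properties of $\cR$: it is closed under addition, multiplication, and multiplication by positive scalars (a scalar $c>0$ can be absorbed by replacing each $a_i^\beta$ with $(c^{1/1})\cdot a_i^\beta$, i.e.\ by rescaling, or more simply by noting $c = c^\beta$ is \emph{not} true, so instead one writes $c\, r(\beta)$ as $\bigl(\sum c\,a_i^\beta\bigr)/\bigl(\sum b_j^\beta\bigr)$ after checking the leading/trailing coefficient inequalities are preserved by the homogeneity of both numerator and denominator). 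The key building block is the single function $\beta \mapsto e^{-\beta^2}$-type decay; more usefully, for $\lambda > 0$ the function $g_\lambda(\beta) = \dfrac{1}{\lambda^\beta + \lambda^{-\beta}}$ lies in $\cR$ (numerator $1 = 1^\beta$, denominator $\lambda^\beta + \lambda^{-\beta}$, and indeed $\max\{\lambda,\lambda^{-1}\} > 1 = \max\{1\}$ while $\min\{\lambda,\lambda^{-1}\} < 1$), and it is a strictly positive function in $C_0(\R,(0,+\infty))$ that is bounded above by $\tfrac12$ and decays like $\lambda^{-|\beta|}$.

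Next I would reduce the claim to approximating a \emph{single} fixed ``bump'' well. Concretely: given $f \in C_0(\R,[0,+\infty))$ and $\eps>0$, choose $R$ with $f < \eps$ outside $[-R,R]$, and on $[-R,R]$ approximate $f$ uniformly within $\eps$ by a function of the form $\sum_{k} c_k\, g_{\lambda}(\beta - t_k)$ for suitable $c_k \geq 0$ and translates $t_k$ — this is possible because the translates of $g_\lambda$ form an approximate identity as $\lambda \to \infty$ after rescaling, or alternatively because finite positive linear combinations of translates of a fixed strictly positive even $C_0$ bump are uniformly dense in $C_0([-R,R],[0,+\infty))$ (a standard convolution/partition-of-unity argument). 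The point is that a translate $g_\lambda(\beta - t) = \dfrac{1}{\lambda^{\beta-t} + \lambda^{t-\beta}} = \dfrac{1}{\lambda^{-t}\lambda^\beta + \lambda^{t}\lambda^{-\beta}}$ is again of the form \eqref{eq.-r} (numerator $1^\beta$, denominator $(\lambda^{-t})\lambda^\beta + (\lambda^t)\lambda^{-\beta}$, with the largest base $\lambda$ if $t<0$ or $\lambda^{-1}$ if $t>0$ still exceeding $1$ strictly, and symmetrically for the minimum), so it lies in $\cR$; and by the closure under sums and positive scalars, so does $\sum_k c_k g_\lambda(\beta - t_k)$ — one must double-check the coefficient inequalities survive summation, which they do because adding fractions with a common refinement of denominators only enlarges the set of numerator exponents within the same range of denominator exponents.

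The main obstacle, and the step requiring the most care, is the boundary condition ``$\max\{b_j\} > \max\{a_i\}$ and $\min\{b_j\} < \min\{a_i\}$'' that guarantees decay at $\pm\infty$: when we put several elements of $\cR$ over a common denominator and add, the numerator acquires new exponents and we must verify the extreme exponents of the numerator still lie strictly \emph{inside} the interval spanned by the denominator exponents. I would handle this by always clearing denominators to a common one of the shape $\prod_k (\lambda^{-t_k}\lambda^\beta + \lambda^{t_k}\lambda^{-\beta})$, observing its exponent set is an interval $[-E, E]$ with $E = \sum_k (1 + |t_k|\log\lambda/\log\lambda)$ — more precisely the extreme exponents are $\pm\sum_k(\text{leading exponent of factor }k)$ — while each numerator contribution, coming from one original element times the \emph{other} factors, has exponents strictly inside this interval because each original element already had a strict gap. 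If a degenerate case arises where equality threatens, I would perturb one of the $\lambda$'s or $t_k$'s slightly (continuity of all the functions involved in the sup norm absorbs the perturbation into the $\eps$ budget). Finally, since all approximants constructed are strictly positive and tend to $0$ at infinity, they genuinely lie in $\cR \subset C_0(\R,(0,+\infty))$, and the $\eps$-approximation shows $\overline{\cR} = C_0(\R,[0,+\infty))$, completing the proof.
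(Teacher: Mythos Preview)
Your overall strategy---build everything from translates of $g_\lambda(\beta) = (\lambda^\beta + \lambda^{-\beta})^{-1}$ with nonnegative coefficients, then invoke an approximate-identity/convolution argument---is close in spirit to the paper's proof, but there is a genuine gap at the very first step: neither scalar multiples $c\cdot r$ nor translates $g_\lambda(\cdot - t)$ lie in $\cR$ as defined. The form \eqref{eq.-r} requires numerator and denominator to be sums of pure exponentials $a_i^\beta$, $b_j^\beta$ with \emph{unit} coefficients; your denominator $\lambda^{-t}\lambda^\beta + \lambda^{t}\lambda^{-\beta}$ carries the coefficients $\lambda^{\pm t}$ in front of $\lambda^{\pm\beta}$ and is therefore \emph{not} of the required shape for $t\neq 0$, and your treatment of scalar multiplication (``write $c\,r(\beta)$ as $(\sum c\,a_i^\beta)/(\sum b_j^\beta)$'') has the same defect. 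Since your entire building-block construction rests on these two operations, the argument as written never produces elements of $\cR$.

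The missing step, which the paper supplies first, is to show that the larger class of quotients $\bigl(\sum_i c_i a_i^\beta\bigr)\big/\bigl(\sum_j d_j b_j^\beta\bigr)$ with arbitrary positive real coefficients $c_i, d_j$ already lies in the uniform closure $\cRbar$: integer coefficients come for free by repeating terms, hence positive rational coefficients as well (repeat in numerator and denominator separately), and arbitrary positive coefficients follow by approximation. Once this is in place, translates and positive scalar multiples do lie in $\cRbar$, and your approximate-identity idea becomes viable. The paper then packages the final density step via Hahn--Banach and Riesz: rather than discretize the convolution $f * \varphi_n$ into a finite positive sum of translates and chase exponent intervals, it shows that any finite signed measure nonnegative on $\cRbar$ is automatically nonnegative on $C_c(\R,[0,+\infty))$, using only that $x \mapsto \varphi_n(x-y)$ lies in $\cRbar$ for every fixed $y$. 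This dual formulation cleanly sidesteps the bookkeeping you flag as ``the main obstacle''.
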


\begin{proof}
Denote by $\cRbar$ the uniform closure of $\cR$ inside $C_0(\R,[0,+\infty))$. We first prove that all functions
\begin{equation}\label{eq.more-r}
r : \R \recht (0,+\infty) : r(\beta) = \frac{\sum_{i=1}^N c_i a_i^\beta}{\sum_{j=1}^M d_j b_j^\beta}
\end{equation}
where $N,M \in \N$, $a_i,c_i,b_j,d_j \in (0,+\infty)$, $\max \{b_j\} > \max\{a_i\}$ and $\min \{b_j\} < \min\{a_i\}$, belong to $\cRbar$.

When the coefficients $c_i,d_j$ are strictly positive integers, the function $r$ simply belongs to $\cR $ because we may repeat terms in the numerator and denominator. Dividing numerator and denominator by the same strictly positive integer, we also find that $r \in \cR $ when the coefficients $c_i,d_j$ are strictly positive rational numbers. Approximating arbitrary coefficients $c_i,d_j$ by rational numbers, it follows that all the functions $r$ in \eqref{eq.more-r} belong to $\cRbar$. Note also that $\cRbar$ is closed under taking products and linear combinations with strictly positive coefficients.

To prove that $\cRbar = C_0(\R,[0,+\infty))$, by the Hahn-Banach theorem and the Riesz theorem, it is sufficient to prove the following statement: whenever $\mu$ is a finite signed measure on the Borel sets of $\R$ with
\begin{equation}\label{eq.cond1}
\int_\R r(x) \; d\mu(x) \geq 0 \quad\text{for all $r \in \cRbar$,}
\end{equation}
we must have that
\begin{equation}\label{eq.cond2}
\int_\R f(x) \; d\mu(x) \geq 0 \quad\text{for all $f \in C_c(\R,[0,+\infty))$.}
\end{equation}
Assume that $\mu$ is a finite signed measure satisfying \eqref{eq.cond1}. Define the approximate unit
$$\varphi_n : \R \recht (0,+\infty) : \varphi_n(x) = D_n^{-1} (2^x + 2^{-x})^{-n} \quad\text{with}\quad D_n = \int_\R (2^x + 2^{-x})^{-n} \; dx \; .$$
Note that $\varphi_n \in \cRbar$ and that, for all $y \in \R$, the function $x \mapsto \varphi_n(x-y)$ also belongs to $\cRbar$, since it is of the form \eqref{eq.more-r}.

Take an arbitrary $f \in C_c(\R,[0,+\infty))$. We have to prove that \eqref{eq.cond2} holds. Define $f_n = f * \varphi_n$. Since $f_n \recht f$ uniformly, it suffices to prove that all $f_n$ have positive integral w.r.t.\ $\mu$. But,
$$\int_\R f_n(x) \; d\mu(x) = \int_\R   f(y) \; \int_\R  \; \varphi_n(x-y) \; d\mu(x) \; dy \geq 0$$
because $x \mapsto \vphi_n(x-y)$ belongs to $\cRbar$ for all $y \in \R$. This ends the proof of the lemma.
\end{proof}

\begin{lemma}\label{lemma.help-technical}
Let $\{ j_{k}\}_{k \in \N} \subset \N$ be a sequence with $j_{k} \geq 2$ for all $k\in \N$ and let $t\in (0, \infty)$. Take $f\in C_{0}(\R, [-1/2,1/2])$ and $\varepsilon >0$. There exist functions $\eta_{1}, \eta_{2}\in C_{0}(\R, [0,1/2])$ with $\lVert f-(\eta_{1}-\eta_{2}) \rVert_{\infty} \leq \varepsilon$ and an $n\in \N$ such that the following hold.

There is set $F$ with $|F|=\prod_{k=1}^{n} j_{k}$, a probability measure $\mu$ of full support on $F$ and a partition $F = F_{0} \sqcup F_{1} \sqcup F_{2}$ such that
$$
\frac{1}{1+t^{\beta}}\cdot  \eta_{1}(\beta) = \frac{\sum_{h\in F_{0}}  \mu(h)^{\beta} }{ \sum_{g\in F} \mu(g)^{\beta}} \quad\text{and}\quad
\frac{t^{\beta}}{1+t^{\beta}} \cdot \eta_{2}(\beta) = \frac{\sum_{h\in F_{1}}  \mu(h)^{\beta} }{ \sum_{g\in F} \mu(g)^{\beta}}
$$
for all $\beta \in \R$.
\end{lemma}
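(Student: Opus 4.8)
\emph{Plan.}  I would realise everything inside a product configuration.  Write $f=f^{+}-f^{-}$ with $f^{\pm}=\max(\pm f,0)\in C_{0}(\R,[0,1/2])$ and fix a tiny $\delta>0$; since it suffices to produce $\eta_{1},\eta_{2}\in C_{0}(\R,[0,1/2])$ with $\|\eta_{1}-f^{+}\|_{\infty}$ and $\|\eta_{2}-f^{-}\|_{\infty}$ of size $O(\delta)$, I shall in fact approximate the slightly shrunk functions $(f^{\pm}-\delta)^{+}\in C_{0}(\R,[0,1/2-\delta])$.  Look for $F=A_{1}\times A_{2}\times\cdots\times A_{n}$ with $|A_{k}|=j_{k}$, $\mu=\mu_{1}\otimes\cdots\otimes\mu_{n}$ a product of fully supported probability measures, and $F_{0}=P_{0}\times F_{0}'$, $F_{1}=P_{1}\times F_{1}'$ with $P_{0},P_{1}\subseteq A_{1}$ disjoint and $F_{0}',F_{1}'\subseteq A_{2}\times\cdots\times A_{n}=:F'$ (so $F_{0}\cap F_{1}=\varnothing$ automatically).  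The first factor only serves to produce the normalisation by $1+t^{\beta}$.  If $j_{1}$ is even, give $j_{1}/2$ points of $A_{1}$ mass $p:=2/(j_{1}(1+t))$ and the remaining $j_{1}/2$ points mass $tp$, with $P_{0}$ the mass-$p$ and $P_{1}$ the mass-$tp$ points; writing $\mu'=\mu_{2}\otimes\cdots\otimes\mu_{n}$, $D_{k}(\beta)=\sum_{a\in A_{k}}\mu_{k}(a)^{\beta}$ and $D'=\prod_{k\geq 2}D_{k}$, a direct computation gives $D_{1}(\beta)=\tfrac{j_{1}}{2}p^{\beta}(1+t^{\beta})$ and hence
\begin{equation*}
\frac{\sum_{g\in F_{0}}\mu(g)^{\beta}}{\sum_{g\in F}\mu(g)^{\beta}}=\frac{1}{1+t^{\beta}}\cdot\frac{\sum_{g\in F_{0}'}\mu'(g)^{\beta}}{D'(\beta)},\qquad
\frac{\sum_{g\in F_{1}}\mu(g)^{\beta}}{\sum_{g\in F}\mu(g)^{\beta}}=\frac{t^{\beta}}{1+t^{\beta}}\cdot\frac{\sum_{g\in F_{1}'}\mu'(g)^{\beta}}{D'(\beta)} .
\end{equation*}
So I may simply \emph{define} $\eta_{1}:=\sum_{g\in F_{0}'}\mu'(g)^{\beta}/D'(\beta)=\Prob_{\nu^{(\beta)}}(F_{0}')$ and $\eta_{2}:=\sum_{g\in F_{1}'}\mu'(g)^{\beta}/D'(\beta)=\Prob_{\nu^{(\beta)}}(F_{1}')$, where $\nu^{(\beta)}=\bigotimes_{k\geq 2}\nu_{k}^{(\beta)}$ is the product probability measure on $F'$ with $\nu_{k}^{(\beta)}(a)=\mu_{k}(a)^{\beta}/D_{k}(\beta)$; these automatically lie in $[0,1]$, and vanish at $\pm\infty$ as soon as $F_{i}'$ avoids the maximal- and minimal-mass points of $F'$.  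When $j_{1}$ is odd, say $j_{1}=2m+1$, take instead $m$ points of mass $p$, $m$ of mass $tp$ and one of mass $p\sqrt{t}$ (with $p$ fixed by $\sum\mu_{1}=1$): then $D_{1}(\beta)=p^{\beta}\bigl(m(1+t^{\beta})+t^{\beta/2}\bigr)$, and the same computation produces the two identities but with an extra factor $\kappa(\beta)=\bigl(1+\tfrac{t^{\beta/2}}{m(1+t^{\beta})}\bigr)^{-1}$ inserted; since $1+t^{\beta}\geq 2t^{\beta/2}$ one has $\kappa\in[\tfrac{2m}{2m+1},1]$ and $\kappa\to 1$ at $\pm\infty$, so this merely replaces the targets $f^{\pm}$ by the mildly rescaled $f^{\pm}/\kappa\in C_{0}(\R,[0,1))$.

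\emph{Reduction.}  Everything now reduces to the following uniform density statement: given $g\in C_{0}(\R,[0,1))$ with $\sup g<1$ and \emph{any} finite list of factor sizes $\geq 2$ (of which one may use as many as desired), there are fully supported product measures on those factors and a subset $E$, avoiding the extreme atoms, such that $\Prob_{\bigotimes_{k}\nu_{k}^{(\beta)}}(E)$ approximates $g$ uniformly.  Applying this once to $g=(f^{+}-\delta)^{+}/\kappa$ on a block of factors $A_{2},\dots,A_{m}$ and once to $g=(f^{-}-\delta)^{+}/\kappa$ on the disjoint block $A_{m+1},\dots,A_{n}$, and taking $F_{0}',F_{1}'$ cylindrical over the complementary block, decouples the two approximations; it also yields $|F|=\prod_{k=1}^{n}j_{k}$ for the resulting $n$ with no padding needed (and padding, if one wanted it, is harmless: a factor carrying the uniform measure multiplies every exponential sum by $j_{k}^{1-\beta}$ and leaves all ratios unchanged).

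\emph{Density.}  For the density statement I would mimic the approximate-identity argument in the proof of Lemma~\ref{lem.cR}, building $g$ from ``bumps''.  If $A_{k}$ has $\geq 3$ atoms, give it masses proportional to $(R,1,\dots,1,1/S)$ with $R,S>1$; then for a distinguished interior atom $a_{k}^{\circ}$ of mass $\propto 1$,
\begin{equation*}
\beta\longmapsto \Prob_{\nu_{k}^{(\beta)}}(\{a_{k}^{\circ}\})=\frac{1}{R^{\beta}+(j_{k}-2)+S^{-\beta}}
\end{equation*}
lies in $C_{0}(\R,(0,\infty))$ and is a bump whose centre and width are controlled by $R,S$ (for $j_{k}=2$ one obtains a bump from the product of two factors with oppositely ordered masses).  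Since cylinder events supported on disjoint sets of coordinates are independent, for $E=\bigcup_{i}E_{i}$ a union of such bump events one has $1-\Prob(E)=\prod_{i}\bigl(1-\Prob(E_{i})\bigr)$, i.e.\ $-\log\bigl(1-\Prob(E)\bigr)=\sum_{i}-\log\bigl(1-\Prob(E_{i})\bigr)$ is a nonnegative sum of bumps.  Because $-\log(1-g)$ belongs to $C_{0}(\R,[0,+\infty))$ (here one uses $\sup g<1$), and because arbitrarily narrow bumps of arbitrarily small height are available (take products of several factors, and several independent copies at the same centre), a Riemann-sum approximation of $-\log(1-g)$ by such a sum yields an $E$ with $\Prob(E)=1-\exp\bigl(-\textstyle\sum_{i}-\log(1-\Prob(E_{i}))\bigr)\approx 1-\exp(\log(1-g))=g$, uniformly, and $E$ avoids the extreme atoms of $F'$ since each $a_{k}^{\circ}$ does.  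Finally $\eta_{1},\eta_{2}\in C_{0}(\R,[0,1/2])$ after the $\delta$-shrinkage (here $\eta_{i}=\kappa\cdot\Prob_{\nu^{(\beta)}}(\cdot)\leq\tfrac12$) and $\|f-(\eta_{1}-\eta_{2})\|_{\infty}\leq\varepsilon$ once $\delta$ and the approximation errors are small compared with $\varepsilon$.

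\emph{Main obstacle.}  The real work is the density statement: one must realise an essentially arbitrary function of $C_{0}(\R,[0,1))$ \emph{exactly} as a ratio $\sum_{g\in F_{0}'}\mu'(g)^{\beta}/D'(\beta)$ whose denominator is \emph{forced} to be a product of exponential sums with prescribed small numbers of terms.  The slick substitution $r\mapsto r/(1+r)$ from the proof of Lemma~\ref{lem.cR} is unavailable here, since it destroys the product structure of the denominator; the approximation must instead be assembled additively, one bump per factor, while simultaneously keeping track of positivity, of decay at $\pm\infty$, and of matching the prescribed sizes $j_{k}$ — the all-odd case, which forces the mild correction factor $\kappa\neq 1$ above, being a representative nuisance.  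The quantitative bookkeeping of bump widths, heights and multiplicities is routine but is the technically heaviest part of the argument.
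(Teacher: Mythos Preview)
Your approach is genuinely different from the paper's, and the reduction via the first factor $A_{1}$ to produce the $(1+t^{\beta})^{-1}$ normalisation is clean. But the argument has a real gap at the step you yourself flag as the ``main obstacle''.

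The paper does \emph{not} impose a product structure on $\mu$. It first uses Lemma~\ref{lem.cR} to write $\eta_{1}'(\beta)=\sum a_{i}^{\beta}\big/\bigl(2\sum a_{i}^{\beta}+\sum b_{j}^{\beta}\bigr)$ with no constraint on the number of terms, and then \emph{pads} the denominator by additional terms of the form $(1+t^{\beta})^{-1}$ so that the total term count equals $j_{1}\cdots j_{p}$; a parallel construction for $\eta_{2}$ on the remaining block $j_{p+1}\cdots j_{n}$ and a common-denominator step finish the job. The measure $\mu$ on $F$ is simply read off from the resulting list of exponents --- it is not a product measure, and nothing in the statement asks it to be.

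By insisting on a product structure $\mu=\bigotimes\mu_{k}$ you are forced into the much harder density problem: realise an arbitrary $g\in C_{0}(\R,[0,1))$ as $\Prob_{\nu^{(\beta)}}(E)$ where the denominator is a \emph{product} of small exponential sums with prescribed numbers $j_{k}$ of terms. Your proposed route --- build $-\log(1-g)$ as a sum of transformed single-coordinate bumps $1/(R^{\beta}+(j_{k}-2)+S^{-\beta})$ via independence --- is plausible but not justified by the phrase ``Riemann-sum approximation''. The bumps in this family are \emph{not} translates of a fixed profile: for $c\neq 0$ the function $\beta\mapsto 1/(R^{\beta}+c+S^{-\beta})$ cannot be written as a shift of another member of the family (the constant term does not translate), so the convolution/approximate-identity mechanism from the proof of Lemma~\ref{lem.cR} does not transfer directly. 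One would need an honest argument that centre, width, and height can be controlled simultaneously under the constraints imposed by the prescribed $j_{k}$, and that finite sums of the resulting $-\log(1-b)$ are uniformly dense in $C_{0}(\R,[0,\infty))$. That argument may well go through, but it is nowhere in your sketch, and it is not routine.

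The paper sidesteps all of this precisely by dropping the product constraint: Lemma~\ref{lem.cR} handles density with unconstrained denominators, and matching $|F|=\prod_{k=1}^{n}j_{k}$ becomes trivial arithmetic padding rather than a structural obstruction.
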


\begin{proof}
Let us first notice that if a family of functions $\mathcal{F} \subset C_{0}(\R, [0,\infty) )$ is dense in $C_{0}(\R, [0,\infty))$ then the family $\{ g(1+2g)^{-1} : g\in \mathcal{F} \}$ is dense in $C_{0}(\mathbb{R}, [0, 1/2])$. This observation follows from the calculation
$$
\Bigl\| \frac{g_1}{1+2g_1} - \frac{g_2}{1+2g_2}\Bigr\|_\infty = \Bigl\| \frac{g_1 - g_2}{(1+2g_1)(1+2g_2)}\Bigr\|_\infty \leq \|g_1 - g_2 \|_\infty   \; .
$$
Now decompose $f=f_{+}-f_{-}$ in its positive and negative part. Since  $\cR$ is dense in $C_{0}(\R, [0, \infty))$ we can choose $\eta_{1}' \in C_{0}(\mathbb{R}, [0, 1/2])$ with $\lVert f_{+}-\eta_{1}' \rVert_{\infty} \leq \varepsilon /3$ such that $\eta_{1}'$ is on the form
$$
\eta_{1}'(\beta) = \frac{\sum_{i=1}^N a_i^\beta }{2\sum_{i=1}^N a_i^\beta  + \sum_{j=1}^M b_j^\beta}
$$
with $a_{i}, b_{j} \in (0,\infty)$. Since $\eta_{1}'\in C_{0}(\mathbb{R}, [0, 1/2])$ there exists an $R>0$ such that $|\eta_{1}'(\beta)| \leq \varepsilon/6$ for $|\beta| \geq R$. Since $t \in (0,\infty)$, we have $0< (1+t^{\beta})^{-1} <1$ for all $\beta \in \R$. Hence we can choose $L =  j_{1} \cdots j_{p} $ for some $p\in \N$ and $K \in \N$ sufficiently big that $L K^{-1} \geq 4N+2M$ yet $L K^{-1}$ is so close to $4N+2M$ that when $T:=L-4KN-2KM$ setting
$$
\eta_{1}(\beta) = \frac{K\sum_{i=1}^N a_i^\beta }{2K\sum_{i=1}^N a_i^\beta  + K\sum_{j=1}^M b_j^\beta + T (1+t^{\beta})^{-1}}
= \frac{\sum_{i=1}^N a_i^\beta }{2\sum_{i=1}^N a_i^\beta  + \sum_{j=1}^M b_j^\beta + K^{-1}T (1+t^{\beta})^{-1}}
$$
gives $|\eta_{1}(\beta)-\eta_{1}'(\beta)|\leq \varepsilon /6$ for $\beta \in [-R,R]$. By choice of $R$ this implies that $\lVert \eta_{1} -  \eta_{1}'\rVert_{\infty} \leq \varepsilon/6$, and hence $\lVert f_{+} - \eta_{1} \rVert_{\infty} \leq \varepsilon/2$.

In a similar way we can choose $\eta_{2}\in C_{0}(\mathbb{R}, [0, 1/2])$ such that
$$
\eta_{2}(\beta) = \frac{\sum_{i=1}^P c_i^\beta }{2\sum_{i=1}^P c_i^\beta  + \sum_{j=1}^Q d_j^\beta + B (1+t^{\beta})^{-1}}
$$
with $c_{i}, d_{j} \in (0,\infty)$, $\lVert f_{-}-\eta_{2} \rVert_{\infty} \leq \varepsilon /2$ and non-negative integers $P, Q, B$ with $4P+2Q+B = j_{p+1}\cdots j_{n}$ for some $n>p$. By re-choosing $N, M, P, Q \in \N$ and $a_{i} , b_{j}, c_{i}, d_{j} \in (0,\infty)$ it follows that we can write
$$
\frac{1}{1+t^{\beta}}\cdot\eta_{1}(\beta)
=\frac{\sum_{i=1}^N a_i^\beta }{2\sum_{i=1}^N a_i^\beta  + \sum_{j=1}^M b_j^\beta } \quad\text{and}\quad \frac{t^{\beta}}{1+t^{\beta}}\cdot\eta_{2}(\beta)
=\frac{\sum_{i=1}^P c_i^\beta }{2\sum_{i=1}^P c_i^\beta  + \sum_{j=1}^Q d_j^\beta }
$$
with $2N+M=j_{1} \cdots j_{p}$ and $2P+Q=j_{p+1} \cdots j_{n}$. Writing these two fractions with the same denominator, we see that this denominator contains four copies of $\sum_{i=1}^{N} \sum_{j=1}^{P} (a_{i} c_{j})^\beta$, while the two numerators contain two copies each. Since the other terms in the two nominators are different from each other, normalizing  this expression proves the Lemma.
\end{proof}

We are now ready to prove Lemma \ref{lem.main-technical}.

\begin{proof}[Proof of Lemma \ref{lem.main-technical}]
Assume that $f\in C_0(\R,[-1/2,1/2])$, $t\in (1, \infty)$ and $\varepsilon>0$. By applying Lemma \ref{lemma.help-technical} to the sequence $(|\Lambda_{n}|)_{n\in \N}$ we find functions $\eta_{1}, \eta_{2} \in C_0(\R,[0,1/2])$ such that $\lVert f - (\eta_{1}-\eta_{2}) \rVert_{\infty} \leq \varepsilon$ and a probability measure $\mu$ of full support on $F:=\prod_{i=1}^{n} \Lambda_{i}$ for some $n \in \N$ such that
$$
\frac{1}{1+t^{\beta}}\cdot  \eta_{1}(\beta) = \frac{\sum_{h\in F_{0}}  \mu(h)^{\beta} }{ \sum_{g\in F} \mu(g)^{\beta}} \quad\text{and}\quad
\frac{t^{\beta}}{1+t^{\beta}} \cdot \eta_{2}(\beta) = \frac{\sum_{h\in F_{1}}  \mu(h)^{\beta} }{ \sum_{g\in F} \mu(g)^{\beta}}
$$
for all $\beta \in \R$ and some partition $F_{0} \sqcup F_{1} \sqcup F_{2}$ of $F$. Define a $1$-cocycle
$$
\Omega: F \times F \to \R \; : \; \Omega(g,h)=\log(\mu(gh))-\log(\mu(h)) \; .
$$
For each $\beta \in \R$, Lemma \ref{lemma.cohomologous} implies that the unique $\beta$-conformal measure $\mu_{\beta}$ for this cocycle then satisfies
$$
\mu_{\beta} (h) = \frac{ \mu(h)^{\beta} }{ \sum_{g\in F} \mu(g)^{\beta}} \quad \text{ for all } h\in F \; .
$$
Defining a function $H: \prod_{i=1}^{n} \Lambda_{i} \to [t^{-1}, t]$ by
$$
H(x)=
\begin{cases}
t \ & \ \text{if } x\in F_{0} \; , \\
t^{-1} \ & \ \text{if } x\in F_{1} \; , \\
1  \ & \ \text{if } x\in F_{2} \; ,
\end{cases}
$$
it follows from Lemma \ref{lemma.help-technical} and a direct computation that for this choice of cocycle $\Omega$ and function $H$ we have
$$
1+\frac{t^{\beta}-1}{t^{\beta}+1}(\eta_{1}(\beta)- \eta_{2}(\beta)) = \int_{\prod_{i=1}^{n} \Lambda_{i}} H^{\beta} \ \mathrm{d} \mu_{\beta} \text{ for all $\beta \in \R$}.
$$
Hence the function $\beta \mapsto 1+\frac{t^{\beta}-1}{t^{\beta}+1}(\eta_{1}(\beta)- \eta_{2}(\beta))$ is realizable by the action $\bigoplus_{i=1}^{n} \Lambda_{i} \actson \prod_{i=1}^{n} \Lambda_{i}$.

We will now use this observation to prove the Lemma. Let $(a_{n})_{n\in \N} \subset (1, \infty)$ be a sequence with $a_{1}=a$ such that $\sum_{n=1}^{\infty}(a_{n}-1) < \infty$, and define for all $n\in \N$ the function
$$P_n : \R \recht \R : P_n(\beta) = \frac{a_n^\beta - 1}{a_n^\beta + 1} \; .$$
Note that $\|P_n\|_\infty \leq 1$. Also note that $P_n(\beta) / P_{n+1}(\beta)$ can be continuously extended to $\beta = 0$ and tends to $1$ when $\beta \recht \pm\infty$. We define $C_n = \|P_n / P_{n+1}\|_\infty < \infty$.

Put $\psi_0 = 1$ and $l_{0}=1$. We now prove that we can inductively choose for all $k \in \N$ a number $l_{k}>l_{k-1}$ and an element $\zeta_k \in C_0(\R,[-1/2,1/2])$ such that, writing
$$\psi_k = \psi_{k-1} (1 + P_k \zeta_k) \; ,$$
we have that
\begin{align}
&\|\psi_k\|_\infty \leq 2 \quad , \label{cond.1}\\
&\Bigl\| \frac{\varphi - \psi_{k-1}}{\psi_{k-1} P_k} - \zeta_k \Bigr\|_\infty \leq \min \{ (4 C_k)^{-1} , 2^{-k} \} \; ,\label{cond.2}
\end{align}
and that the function $1+P_{k}\zeta_{k}$ is realizable by the action $\bigoplus_{i=l_{k-1}}^{l_{k}-1} \Lambda_{i} \actson \prod_{i=l_{k-1}}^{l_{k}-1} \Lambda_{i}$ and a function $H_{k}:\prod_{i=l_{k-1}}^{l_{k}-1} \Lambda_{i} \to [a_{k}^{-1}, a_{k}]$.

To choose $\zeta_{1}\in C_0(\R,[-1/2,1/2])$ we apply the first part of the proof with $f=\zeta$, $t=a_{1}$ and $\varepsilon = \min\{ (4C_{1})^{-1}, 2^{-1}\}$ and we set $\zeta_{1} = \eta_{1} - \eta_{2}$ and $l_{1}=n+1$ where $1+P_{1}\zeta_{1}$ is realizable by $\bigoplus_{i=1}^{n} \Lambda_{i} \actson \prod_{i=1}^{n} \Lambda_{i}$. By the choice of $\zeta_{1}$ and $P_{1}$ we have that $1/2 \leq \psi_{1} \leq 3/2$, so since
$$
\frac{\varphi-\psi_{0}}{P_{1} \psi_{0}} = \zeta
$$
it follows that the statement in our inductive process is true for $k=1$.

Assume that we have chosen $\zeta_1,\ldots,\zeta_n $ and $l_{1} < l_{2} < \cdots <l_{n}$ satisfying our conditions for all $k \in \{1,\ldots,n\}$. Since
$$\frac{\vphi - \psi_n}{\psi_n P_{n+1}} = \frac{\vphi - \psi_{n-1} - \psi_{n-1} P_n \zeta_n}{\psi_n P_{n+1}} = \frac{1}{1 + P_n \zeta_n} \, \frac{P_n}{P_{n+1}} \, \Bigl( \frac{\vphi - \psi_{n-1}}{\psi_{n-1} P_n} - \zeta_n\Bigr)$$
and since $1 + P_n \zeta_n \geq 1/2$, we get that
$$\Bigl\|\frac{\vphi - \psi_n}{\psi_n P_{n+1}}\Bigr\|_\infty \leq 2 \, C_n \, \frac{1}{4 C_n} = \frac{1}{2} \; .$$
Since all $\zeta_1,\ldots,\zeta_n$ tend to zero at infinity and since all $P_k$ are bounded, we get that $\psi_n$ tends to $1$ at infinity. Also $\varphi$ tends to $1$ at infinity. Hence, $(\vphi - \psi_n)/(\psi_n P_{n+1})$ tends to zero at infinity. We now apply the observation in the first part of the proof again with $f=(\vphi - \psi_n)/(\psi_n P_{n+1})$, $t=a_{n+1}$, $\varepsilon = \min\{(4C_{n+1})^{-1}, 2^{-n-1}\}$ and the sequence $(\Lambda_{j})_{j\geq l_{n}}$ to obtain an element $\zeta_{n+1} \in C_0(\R,[-1/2,1/2])$ and a number $l_{n+1} > l_{n}$ such that \eqref{cond.2} is true for $k=n+1$ and $1+P_{n+1}\zeta_{n+1}$ is realizable by the action $\bigoplus_{i=l_{n}}^{l_{n+1}-1} \Lambda_{i} \actson \prod_{i=l_{n}}^{l_{n+1}-1} \Lambda_{i}$ and a function $H_{n+1}:\prod_{i=l_{n}}^{l_{n+1}-1} \Lambda_{i} \to [a_{n+1}^{-1}, a_{n+1}]$. We then define $\psi_{n+1} = \psi_n (1 + P_{n+1} \zeta_{n+1})$. Since \eqref{cond.2} holds for $k=n+1$, we get that
$$\Bigl\| \frac{\varphi - \psi_{n+1}}{\psi_n P_{n+1}} \Bigr\|_\infty \leq 2^{-n-1} \; . $$
It follows that
\begin{equation}\label{eq.good-approx}
\|\varphi - \psi_{n+1} \|_\infty \leq \|\psi_n\|_\infty \, \|P_{n+1}\|_\infty \, 2^{-n-1} \leq 2^{-n} \; .
\end{equation}
Since $\|\varphi\|_\infty \leq 3/2$ and $n \geq 1$, we get that $\|\psi_{n+1}\|_\infty \leq 2$, so that also \eqref{cond.1} holds for $k=n+1$. This concludes the inductive construction of the functions $\zeta_n $, the numbers $l_{n}$ and the corresponding functions $\psi_n$.

By definition we have that
\begin{equation}\label{eq.product}
\psi_n = \prod_{j=1}^{n} (1+P_{j} \zeta_{j}) \; ,
\end{equation}
and by \eqref{eq.good-approx} it follows that $\psi_{n}$ converges to $\varphi$. Setting $\Lambda_{n}'= \prod_{i=l_{n-1}}^{l_{n}-1} \Lambda_{i}$ we have that $\Lambda = \bigoplus_{n\in\N} \Lambda_{n}'$ and $X=\prod_{n\in \N} \Lambda_{n}'$. For each $n\in \N$ let $\Omega_{n}$ be the $1$-cocycle on $\Lambda_{n}' \times \Lambda_{n}'$ that has as unique $\beta$-conformal probability measure $\mu_{n, \beta}$ for all $\beta \in \R$ such that
$$
1+P_{n}(\beta) \zeta_{n}(\beta) = \int_{\Lambda_{n}'} H_{n}^{\beta} \ \mathrm{d} \mu_{n, \beta} \quad \text{ for all $\beta \in \R$.}
$$
Since $\sum_{n=1}^{\infty}(a_{n}-1) < \infty$ we can define a continuous function $H$ on $X$ by setting
$$
H(y) = \prod_{n=1}^{\infty} H_{n}(y_{n}) \quad \text{ for } y \in X=\prod_{n=1}^{\infty} \Lambda_{n}'\; ,
$$
and we can define a cocycle $\Omega: \Lambda \times X \to \R$ by setting
$$
\Omega((\lambda_{n})_{n\in \N}, (x_{n})_{n\in \N}) = \sum_{n\in \N} \Omega_{n}(\lambda_{n}, x_{n}) \; .
$$
It follows from repeated use of Lemma \ref{lemma.prod-measure} that $\mu_{\beta}:=\prod_{n\in \N} \mu_{n, \beta}$ is the unique $\beta$-conformal measure on $X$, and hence a direct computation using \eqref{eq.product} shows that
$$
\varphi(\beta) = \int_{X} H^{\beta} \ \mathrm{d} \mu_{\beta} \quad \text{ for all $\beta \in \R$}.
$$
This concludes the proof.
\end{proof}

In the proof of the second part of Theorem \ref{thm.all-closed-sets}, we will need the following complement of Lemma \ref{lem.main-technical}, providing more realizable functions.

\begin{lemma} \label{lemma.realizablefractions}
Let $(\Lambda_n)_{n \in \N}$ be a sequence of nontrivial finite groups. Put $\Lambda = \bigoplus_{n\in \N} \Lambda_n$ and consider the product action of $\Lambda$ on $X = \prod_{n \in \N} \Lambda_n$ by translation.

If $K \subset \R$ is a closed set that does not contain $0$ and $k\in \N$ with $k\geq 2$, there exist realizable functions $\vphi_1$ and $\vphi_2$ for the action $\Lambda \actson X$ such that
$$\{\beta \in \R \mid  \vphi_1(\beta) = k^{-1}\} = K = \{\beta \in \R \mid \vphi_2(\beta) = k \} \; .$$
\end{lemma}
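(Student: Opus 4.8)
The plan is to construct $\varphi_1$ explicitly and then set $\varphi_2 := 1/\varphi_1$; since $\{\beta \mid \varphi_2(\beta) = k\} = \{\beta \mid \varphi_1(\beta) = k^{-1}\}$, this reduces everything to $\varphi_1$, once we record two closure properties of realizability for $\Lambda \actson X$. First, it is stable under products: if $\varphi$ and $\psi$ are realized by cocycles supported on two complementary infinite sub-families of the coordinates of $X = \prod_n \Lambda_n$ (always available, since Lemma~\ref{lem.main-technical} realizes the relevant functions by \emph{any} sequence of nontrivial finite groups), then $\varphi\psi$ is realizable — this is Lemma~\ref{lemma.prod-measure}, used as at the end of the proof of Lemma~\ref{lem.main-technical}. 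Second, it is stable under reciprocals: every $\varphi_1$ we build will be a (possibly infinite) product of finite‑block factors of the form $c(\beta) = \bigl(\sum_{h\in F}(H(h)\mu(h))^\beta\bigr)\big/\bigl(\sum_{g\in F}\mu(g)^\beta\bigr)$ realized on $F = \prod_{i\in I}\Lambda_i$ by the cocycle $\Omega(g,h) = \log\mu(gh)-\log\mu(h)$ (full support $\mu$, $H>0$, unique $\beta$‑conformal measure $\mu(\cdot)^\beta/\sum_g\mu(g)^\beta$, exactly as in Lemmas~\ref{lemma.help-technical}–\ref{lem.main-technical}); replacing $\mu$ by the probability measure $\mu' = H\mu/\sum_h H(h)\mu(h)$ and $H$ by $1/H$ realizes $1/c$ on the same block. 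Finally, since $K$ is closed with $0 \notin K$, fix $\varepsilon>0$ with $K \cap (-2\varepsilon,2\varepsilon) = \emptyset$.

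\emph{Case $K$ bounded.} Pick $m\in\N$ with $v := k^{-1/m} > 1/2$, and a continuous $\psi : \R \recht [0,1]$ with $\psi^{-1}(1) = K$, $\psi\equiv 0$ on $[-\varepsilon,\varepsilon]$ and off a large ball (e.g.\ $\psi(\beta) = \max\{0,\,1-\operatorname{dist}(\beta,K)/\min\{\varepsilon,1\}\}$). Put $\theta := 1-(1-v)\psi : \R \recht [v,1]$, so $\theta^{-1}(v) = K$, and set $\zeta(\beta) = \dfrac{\theta(\beta)-1}{(a^\beta-1)/(a^\beta+1)}$ for $\beta\neq 0$, $\zeta(0)=0$. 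Since $\theta\equiv 1$ near $0$ and off a ball, $\zeta\in C_0(\R)$, and since $\frac{|a^\beta-1|}{a^\beta+1}\ge \frac{a^\varepsilon-1}{a^\varepsilon+1}$ for $|\beta|\ge\varepsilon$ and $1-v<1/2$, we get $\|\zeta\|_\infty\le \frac{1-v}{(a^\varepsilon-1)/(a^\varepsilon+1)}\le 1/2$ for $a$ large. By Lemma~\ref{lem.main-technical}, $\theta = 1 + \frac{a^\beta-1}{a^\beta+1}\zeta$ is realizable by $\Lambda\actson X$, hence so is $\varphi_1 := \theta^m$ (a product of $m$ copies, via Lemma~\ref{lemma.prod-measure}), and $\{\varphi_1 = k^{-1}\} = \{\theta = v\} = K$.

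\emph{Case $K$ unbounded.} The obstruction is that a realizable function equal to $k^{-1}$ on an unbounded set must tend to $k^{-1}$ at the corresponding end(s), whereas Lemma~\ref{lem.main-technical} only produces functions tending to $1$. So I would first build a finite‑block \emph{gadget} $c$ carrying the right limits: take a finite $I_0\subset\N$ with $|F| := \prod_{i\in I_0}|\Lambda_i|$ large, a full‑support probability measure $\mu$ on $F$ whose maximum is attained at exactly $k$ points (and whose minimum is attained at exactly $k$ points, if $K$ is unbounded below), and $H : F \recht (0,\infty)$ so that $H(h)\mu(h)$ attains its maximum at exactly one maximal point of $\mu$ (and its minimum at exactly one minimal point, when relevant). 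Then $c(\beta) = \bigl(\sum_h(H(h)\mu(h))^\beta\bigr)\big/\bigl(\sum_g\mu(g)^\beta\bigr)$ satisfies $c(0)=1$, has compact range in $(0,\infty)$ (it has limits at $\pm\infty$), and tends to $k^{-1}$ at the unbounded end(s) and to $1$ at the bounded end(s). Choose $m$ so large that $k^{-1/m}$ and all values $(kc(\beta))^{-1/m}$ lie in a fixed compact subinterval of $(1/2,3/2)$ (possible since $c$ has compact range in $(0,\infty)$). Then pick a continuous $\theta = g + \eta$, with $g := (kc)^{-1/m}$ and $\eta\ge 0$ continuous, $\eta^{-1}(0) = K$, built as a "distance‑to‑$K$" bump times a positive weight with prescribed limits so that $\theta(0) = 1$ and $\theta\recht 1$ at $\pm\infty$ (using $g(0) = k^{-1/m}$, that $g\recht 1$ at unbounded ends, and that $K$ avoids the bounded ends); this gives $\theta^m = (kc)^{-1}$ on $K$ and $\theta^m > (kc)^{-1}$ off $K$. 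The associated $\zeta := (\theta-1)\big/\frac{a^\beta-1}{a^\beta+1}$ lies in $C_0(\R,[-1/2,1/2])$ for $a$ large (and $\eta$ suitably small), so $\theta$ is realizable by Lemma~\ref{lem.main-technical}. Hence $\varphi_1 := c\cdot\theta^m$ is realizable (the gadget on $I_0$, the $m$ copies of $\theta$ on $m$ disjoint infinite blocks, assembled by Lemma~\ref{lemma.prod-measure}), and $\{\varphi_1 = k^{-1}\} = \{\theta^m = (kc)^{-1}\} = K$. In both cases $\varphi_2 := 1/\varphi_1$ finishes the proof.

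The hard part is the unbounded case: one must coordinate the real‑analytic gadget $c$ with the $C_0$‑type correction $\theta$ so that simultaneously $\theta$ stays inside the class allowed by Lemma~\ref{lem.main-technical} (notably $\theta(0)=1$, $\theta\recht 1$ at $\pm\infty$, $|\zeta|\le 1/2$), the product $c\,\theta^m$ equals $k^{-1}$ \emph{exactly} on $K$, and the level‑set identity survives near $\beta=0$ (where $c$ and $\theta$ both sit near $1$ but $K$ is absent) and near infinity (where $c\,\theta^m$ and $k^{-1}$ approach a common limit). I expect this to be entirely elementary once the parameters $I_0,\mu,H,m,a$ and the weight inside $\eta$ are fixed in the right order; the bounded case and the two closure properties are routine given Lemmas~\ref{lem.main-technical}, \ref{lemma.help-technical} and~\ref{lemma.prod-measure}.
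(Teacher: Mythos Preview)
Your reciprocal observation is correct and more general than you state: if $(\Omega,H)$ realizes $\varphi$, then the cohomologous cocycle $\tilde\Omega(g,x)=\Omega(g,x)+\log H(g\cdot x)-\log H(x)$ has unique $\beta$-conformal measure $\tilde\mu_\beta=\varphi(\beta)^{-1}H^\beta\mu_\beta$ (Lemma~\ref{lemma.cohomologous}), and $\int H^{-\beta}\,d\tilde\mu_\beta=1/\varphi(\beta)$ --- no finite-block structure is needed. The paper does not use this; it constructs $\varphi_1$ and $\varphi_2$ by two parallel computations. Your bounded case is also correct and pleasantly direct, and genuinely different from the paper: instead of an $m$-th root trick and a bounded/unbounded case split, the paper writes $\varphi_1$ as a single explicit ratio $R(\beta)$ of exponential sums (built so that $|\Lambda_1|$ enters the term count) times one Lemma~\ref{lem.main-technical} factor $1+P\zeta$. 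The equation $\varphi_1=k^{-1}$ then reads $\zeta(\beta)=Q(\beta)$ for an explicit $Q$ that stays in $[-1/2,1/2]$ for $|\beta|\geq\delta$ and blows up at $0$; setting $\zeta=F(Q)\,\psi$ with $\psi^{-1}(1)=K$ and $F$ a piecewise-linear truncation fixing $[-1/2,1/2]$ and vanishing outside $[-1,1]$ forces $\zeta=Q$ exactly on $K$. Letting the prefactor $R$ carry the correct asymptotics and encoding the level set purely as ``$\zeta$ hits the target $Q$'' is what allows the paper to treat all $K$ uniformly.

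Your unbounded case, by contrast, is not a proof as written, and the gap is exactly the coordination problem you flag at the end. Concretely: to get $\zeta:=(\theta-1)/P\in C_0(\R,[-1/2,1/2])$ you need $\theta-1$ to vanish at least to first order at $0$, which forces $\eta'(0)=-g'(0)$ with $g=(kc)^{-1/m}$ generically non-flat there; you need $\eta\geq 0$, $\eta^{-1}(0)=K$, \emph{and} $\eta$ tending to the specific positive value $1-k^{-1/m}$ at each bounded end and to $0$ at each unbounded end, while keeping $|\theta-1|\leq |P|/2$ globally; and these constraints interact differently in the three sub-cases (unbounded above / below / both). None of this looks impossible, but ``I expect this to be entirely elementary once the parameters are fixed in the right order'' does not discharge it. The paper's truncation device is precisely what eliminates this bookkeeping.
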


\begin{proof}
Take $\delta > 0$ such that $K \cap [-\delta,\delta] = \emptyset$. Choose a continuous function $\psi : \R \recht [0,1]$ such that $\psi(\beta) = 1$ if and only if $\beta \in K$. Define the piecewise linear function
$$F : \R \recht \R : F(t) = \begin{cases} t &\;\;\text{if $|t| \leq 1/2$,}\\
1-t &\;\;\text{if $1/2 \leq t \leq 1$,}\\
-1-t &\;\;\text{if $-1 \leq t \leq -1/2$,}\\
0 &\;\;\text{if $|t| \geq 1$.}\end{cases}$$

We can assume without loss of generality that $|\Lambda_{1}|\geq 2k$, and we set $l=|\Lambda_{1}|-2k$. To construct $\vphi_1$, choose $b > 1$ large enough such that $2 (k-1)b^\beta +(l-k^{-1}l)b^{\beta} \leq 1/4$ for all $\beta \leq -\delta$. Set $c=b+1$, and take $a > c$ large enough such that
$$2(k-1)(b/a)^\beta + (l-k^{-1}l)(c/a)^{\beta}\leq \frac{1}{4} \;\;\text{for all $\beta \geq \delta$, and}\quad \frac{a^\delta + 1}{a^\delta -1} \leq 2 \; .$$
It follows that
$$\Bigl|\frac{a^\beta + 1}{a^\beta -1}\Bigr| \leq 2 \quad\text{if $|\beta| \geq \delta$.}$$
Define for all $\beta \neq 0$,
$$Q(\beta) = - \frac{2(k-1) b^\beta+(l-k^{-1}l)c^{\beta}}{1 + 2(k-1) b^\beta + a^\beta + l c^{\beta}} \, \cdot \frac{a^\beta + 1}{a^\beta - 1} \; .$$
Note that $|Q(\beta)| \recht +\infty$ if $|\beta| \recht 0$. We have made our choices such that $|Q(\beta)| \leq 1/2$ if $|\beta| \geq \delta$.

Then define $\zeta(\beta) = F(Q(\beta)) \, \psi(\beta)$ and put $\zeta(0) = 0$. Note that $\zeta$ is continuous. Since $Q(\beta)\to 0$ for $\beta \to \pm \infty$ we have $\zeta \in C_0(\R,[-1/2,1/2])$. Define
$$P(\beta) = \frac{a^\beta - 1}{a^\beta + 1} \; .$$
By Lemma \ref{lem.main-technical} the function $1 + P(\beta) \zeta(\beta)$ is realizable for the action $\bigoplus_{n\geq 2}\Lambda_{n} \actson \prod_{n\geq 2} \Lambda_{n}$. Since $2k+l=|\Lambda_{1}|$ it is straightforward to check that the function
$$\vphi_{1}: \R \to \R \; : \; \vphi_1(\beta) = \frac{1 + 2(k-1) b^\beta + a^\beta+lc^{\beta}}{k  + k a^\beta +lc^{\beta}} \, (1 + P(\beta) \zeta(\beta))$$
is realizable for $\Lambda \actson X$.

By construction, $\vphi_1(\beta) = k^{-1}$ if and only if
$$P(\beta) \zeta(\beta) = - \frac{2(k-1) b^\beta+(l-k^{-1}l)c^{\beta}}{1+2(k-1) b^\beta + a^\beta+lc^{\beta}} \; .$$
This holds iff $\beta \neq 0$ and $\zeta(\beta) = Q(\beta)$. Since $|F(Q(\beta))| < |Q(\beta)|$ when $|Q(\beta)| > 1/2$ and since $\psi(\beta) < 1$ when $\beta \not\in K$, we find that $\zeta(\beta) = Q(\beta)$ if and only if $|Q(\beta)| \leq 1/2$ and $\beta \in K$. But, for $\beta \in K$, we have $|\beta| \geq \delta$ and thus $|Q(\beta)| \leq 1/2$. We have proven that $\vphi_1(\beta) = k^{-1}$ if and only if $\beta \in K$.

To construct $\vphi_2$, choose $a>c>b > 1$ as before. Define for all $\beta \neq 0$,
$$
Q(\beta) = \frac{2 (k-1)b^\beta+(l-k^{-1}l)c^{\beta}}{1+a^{\beta}+k^{-1}lc^{\beta} } \; \cdot \frac{a^{\beta}+1}{a^\beta - 1} \; .
$$
It follows that $|Q(\beta)| \leq 1/2$ whenever $|\beta| \geq \delta$. Put $\zeta(\beta) = F(Q(\beta)) \, \psi(\beta)$ and put $\zeta(0) = 0$. By construction, $\zeta \in C_0(\R,[-1/2,1/2])$. Define
$$P(\beta) = \frac{a^\beta - 1}{a^\beta + 1} \; .$$
By Lemma \ref{lem.main-technical} the function $1 + P(\beta) \zeta(\beta)$ is realizable for the action $\bigoplus_{n\geq 2}\Lambda_{n} \actson \prod_{n\geq 2} \Lambda_{n}$, from which it follows that the product function
$$\vphi_2(\beta) = \frac{k + ka^\beta + lc^{\beta} }{1 + 2(k-1) b^\beta + a^\beta + lc^{\beta}} \, (1 + P(\beta) \zeta(\beta))$$
is realizable for $\Lambda \actson X$. By construction, $\vphi_2(\beta) = k$ if and only if
$$P(\beta) \zeta(\beta) = \frac{2 (k-1)b^\beta+(l-k^{-1}l)c^{\beta}}{1+a^{\beta}+k^{-1}lc^{\beta} } \; .$$
This holds iff $\beta \neq 0$ and $\zeta(\beta) = Q(\beta)$. As above, this holds if and only if $\beta \in K$.
\end{proof}

\subsection{KMS spectra for actions of amenable groups}

We consider the wreath product $\Lambda \wr \Z$ of a countable group $\Lambda$ with the group $\Z$ of integers. Recall that $\Lambda \wr \Z$ is defined as the semidirect product $\Lambda^{(\Z)} \rtimes \Z$, where $\Z$ acts on $\Lambda^{(\Z)}=\bigoplus_{n\in \Z} \Lambda$ by the shift. We denote by $\pi_n : \Lambda \recht \Lambda^{(\Z)}$ the embedding in the direct summand corresponding to $n \in \Z$. We view $\Z$ as a subgroup of $\Lambda \wr \Z$. We then have $k \pi_n(\lambda) = \pi_{n+k}(\lambda) k$ for all $k,n \in \Z$ and $\lambda \in \Lambda$.

\begin{proposition}\label{prop.generic-construction}
Let $\Lambda$ be a nontrivial countable group and $\Lambda \actson X$ a free action of $\Lambda$ by homeomorphisms of a compact metric space $X$. Consider the wreath product group $\Gamma = \Lambda \wr \Z$ and its natural action $\Gamma \actson Y = X^\Z$.

The action $\Gamma \actson Y$ has the following universality property: whenever the function $\vphi : \R \to (0,+\infty)$ is realizable by $\Lambda \actson X$, there exists a continuous $1$-cocycle $\Gamma \times Y \recht \R$ with KMS spectrum $K = \{\beta \in \R \mid \vphi(\beta) = 1 \}$ and with for every $\beta \in K$, a unique $\beta$-conformal probability measure $\mu_\beta$. Also, $\Gamma \actson (Y,\mu_\beta)$ is essentially free for every $\beta \in K$.
\end{proposition}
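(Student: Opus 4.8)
The plan is to construct the cocycle on $\Gamma = \Lambda^{(\Z)} \rtimes \Z$ out of the data realizing $\vphi$ — a continuous $1$-cocycle $\Om_0 : \Lambda \times X \recht \R$ with a unique $\be$-conformal probability measure $\mu^0_\be$ for every $\be$, and a continuous $H : X \recht (0,+\infty)$ with $\vphi(\be) = \int_X H^\be \, d\mu^0_\be$ — so that the \emph{defect} of $\Z$-conformality of the natural product measure on $Y = X^\Z$ is precisely the scalar $\vphi(\be)$. Let $\widetilde{\Om}_0$ be the cocycle cohomologous to $\Om_0$ via the coboundary of $\log H$; by Lemma \ref{lemma.cohomologous} its unique $\be$-conformal probability measure is $\widetilde{\mu}_\be = \vphi(\be)^{-1} H^\be \mu^0_\be$. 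I would then take the product cocycle $\omega_1$ for $\Lambda^{(\Z)} \actson Y$ in which $\pi_n(\Lambda)$ acts on the $n$-th coordinate through $\Om_0$ when $n \leq 0$ and through $\widetilde{\Om}_0$ when $n \geq 1$, together with the shift cocycle $\omega_2$ for $\Z \actson Y$ given by $\omega_2(s, (x_m)_m) = \log H(x_0)$. The first step is to check the compatibility condition ensuring that $\omega_1$ and $\omega_2$ assemble into a single continuous cocycle $\Om$ on $\Gamma$; after conjugating $\pi_n(\lambda)$ by $s$ this reduces to the single nontrivial identity $\widetilde{\Om}_0(\lambda, x) - \Om_0(\lambda, x) = \log H(\phi_\lambda x) - \log H(x)$, which holds by definition of $\widetilde{\Om}_0$.

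Next I would identify the $\be$-conformal probability measures for the subaction $\Lambda^{(\Z)} \actson Y$. Applying Lemma \ref{lemma.prod-measure} to finitely many coordinates at a time (each one carrying a cocycle, $\Om_0$ or $\widetilde{\Om}_0$, with a unique $\be$-conformal probability measure), one sees that every $\be$-conformal probability measure has all finite-coordinate marginals equal to the corresponding finite products, hence by consistency equals $\mu_\be := \bigotimes_{n \leq 0} \mu^0_\be \otimes \bigotimes_{n \geq 1} \widetilde{\mu}_\be$; and $\mu_\be$ is indeed $\be$-conformal for $\Lambda^{(\Z)}$ by Lemma \ref{lem.genconformal}, since each generator $\pi_n(\lambda)$ moves only the $n$-th coordinate. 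As any $\be$-conformal probability measure for $\Gamma$ restricts to one for $\Lambda^{(\Z)}$, it must equal $\mu_\be$. It then remains to see when $\mu_\be$ is also $\be$-conformal for the generator $s$: computing $(\phi_{s^{-1}})_* \mu_\be$ shows it differs from $\mu_\be$ only in the $0$-th coordinate, where $\mu^0_\be$ is replaced by $\widetilde{\mu}_\be = \vphi(\be)^{-1} H^\be \mu^0_\be$, so that $\mu_\be(\phi_s B) = \vphi(\be)^{-1} \int_B H(x_0)^\be \, d\mu_\be$; on the other hand $\be$-conformality for $s$ demands $\mu_\be(\phi_s B) = \int_B e^{\be \Om(s,x)} \, d\mu_\be = \int_B H(x_0)^\be \, d\mu_\be$. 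The two agree exactly when $\vphi(\be) = 1$. Together with Lemma \ref{lem.genconformal} this shows that the KMS spectrum is $K = \{\be \in \R \mid \vphi(\be) = 1\}$, and that for $\be \in K$ the only $\be$-conformal probability measure is $\mu_\be$.

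For the uniqueness of the $\be$-KMS state I would invoke Lemma \ref{lemma.cstaralgeba}, which requires $\Gamma \actson (Y, \mu_\be)$ to be essentially free. Freeness of $\Lambda \actson X$ makes the product action $\Lambda^{(\Z)} \actson Y$ free, so only elements $g = \gamma s^k$ with $k \neq 0$ can have fixed points. Since $\gamma \in \Lambda^{(\Z)}$ has only finitely many nontrivial coordinates, a fixed point of $g$ must satisfy $x_n = x_{n-k}$ for all large $|n|$, hence is eventually periodic with period $|k|$; and the set of such sequences is $\mu_\be$-null, because $\mu_\be$ is a product of copies of $\mu^0_\be$ and $\widetilde{\mu}_\be$, neither of which is a point mass (a $\be$-conformal point mass on $X$ would contradict freeness of $\Lambda \actson X$), so $\mu_\be(x_j = x_{j+k}) \leq p < 1$ uniformly in $j$ and imposing this over infinitely many disjoint pairs of coordinates forces measure zero. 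A countable union over $\gamma$ and $k \neq 0$ then shows that the set of points with nontrivial $\Gamma$-isotropy is $\mu_\be$-null.

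I expect the compatibility check for the assembled cocycle, and above all the infinite-product uniqueness step (promoting Lemma \ref{lemma.prod-measure} from finite collections of coordinates to all of $\Z$ via consistency of marginals), to be the delicate points; the essential-freeness argument is routine but genuinely uses that the $\be$-conformal measures on $X$ are not point masses, which is where freeness of $\Lambda \actson X$ re-enters.
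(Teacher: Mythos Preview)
Your proposal is correct and follows essentially the same route as the paper: build the $\Gamma$-cocycle by using $\Om_0$ on one half of $\Z$ and its $\log H$-coboundary twist on the other half, identify the unique $\Lambda^{(\Z)}$-conformal product measure via Lemma~\ref{lemma.prod-measure}, and read off the defect $\vphi(\be)$ in the shift direction; the only cosmetic difference is that the paper places the twisted cocycle on $n\le 0$ rather than $n\ge 1$ (and correspondingly flips the sign in the $\Z$-cocycle). Your compatibility check and essential-freeness argument are also the same in spirit, though you spell out more explicitly why the factor measures are not point masses.
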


\begin{proof}
Let $\vphi : \R \to (0,+\infty)$ be a function that is realizable by $\Lambda \actson X$. By definition, there is a continuous $1$-cocycle $\Omega_0 : \Lambda \times X \recht \R$ and a continuous function $H : X \to (0,+\infty)$ such that for every $\beta \in \R$, there is a unique $\beta$-conformal measure $\nu_\beta$ on $X$ satisfying
$$\varphi(\beta) = \int_X H^\beta \, d\nu_\be \; .$$
Define the $1$-cocycle
$$\Om_1 : \Lambda \times X \recht \R : \Om_1(\lambda,x) = \log(H(\lambda \cdot x)) - \log(H(x)) + \Omega_0(\lambda,x) \; ,$$
which is cohomologous to $\Om_{0}$. Define $\Omega$ on $\Lambda^{(\Z)}\times Y$ by
$$\Omega(\lambda, x)=\sum_{n\leq 0} \Om_{1}(\lambda_{n}, x_{n})+\sum_{n>0} \Om_{0}(\lambda_{n}, x_{n})$$ and define $\Omega$ on $\Z\times Y$ by
$$
\Omega(n,x) =
\begin{cases}
\displaystyle\sum_{i=0}^{n-1} -\log(H(x_{-i})) \ & \ \text{ if } n> 0 \; , \\
0 \ & \ \text{ if } n= 0 \; , \\
\displaystyle\sum_{i=1}^{-n} \log(H(x_{i})) \ & \ \text{ if } n < 0 \; .
\end{cases}
$$
It is then straightforward to check that
$$
\Omega((\lambda, n) , x):= \Omega(\lambda, n\cdot x)+\Omega(n, x) \quad \text{for }(\lambda, n) \in \Lambda \wr \Z \text{ and }x\in Y
$$
is a continuous $1$-cocycle for the action $\Gamma \actson Y$.

For every $\beta \in \R$, it follows from Lemma \ref{lemma.cohomologous} that there is a unique $e^{\beta \cdot \Om_{1}}$-conformal probability measure $\eta_\beta$ on $X$ and it is given by
$$\frac{d\eta_\beta}{d\nu_\beta} = \varphi(\beta)^{-1} H^\beta \; .$$
By repeated use of Lemma \ref{lemma.prod-measure} it follows that for all $\beta \in \R$, the product measure
$$\mu_\beta = \prod_{n \leq 0} \eta_\beta \times \prod_{n > 0} \nu_\beta$$
is the unique $\beta$-conformal probability measure for the action $\Lambda^{(\Z)} \actson Y$ and the restriction of $\Om$ to $\Lambda^{(\Z)} \times Y$. For all $\beta \in \R$, the shift $\Z \actson (Y,\mu_\beta)$ is nonsingular and satisfies
$$\frac{d((-1) \cdot \mu_\beta)}{d\mu_\beta}(x) = \varphi(\beta) H(x_0)^{-\beta} \; .$$
It follows from Lemma \ref{lem.genconformal} that $\mu_\beta$ is $\beta$-conformal for the entire action $\Gamma \actson Y$ and $\Om : \Gamma \times Y \recht \R$ if and only if $\varphi(\beta) = 1$.

To prove that the action $\Gamma \actson (Y,\mu_\beta)$ is essentially free, it suffices to prove that for any $(\lambda, n) \in \Gamma\setminus \{e\}$ the set of $(\lambda, n)$ fixed points is a $\mu_{\beta}$-null set. Since $\Lambda$ acts freely on $X$, the set of fixed points is empty unless $n\neq 0$. When $n\neq 0$ there exists an $N>0$ such that any fixed point $(x_{k})_{k\in \Z}$ of $(\lambda, n)$ satisfies $x_{k}=x_{k+n}$ for all $k>N$. Since $\Lambda$ is nontrivial, $\nu_{\beta}$ is not a Dirac-measure, and it follows that the set of such eventually periodic points is a $\mu_{\beta}$-null set.
\end{proof}

Combining Proposition \ref{prop.generic-construction} with Lemma \ref{lem.main-technical}, it is now straightforward to prove Theorem \ref{thm.all-closed-sets-amenable}.

\begin{proof}[{Proof of Theorem \ref{thm.all-closed-sets-amenable}}]
Let $K\subset \R$ be a closed set with $0\in K$. Fix a $t\in (1, \infty)$ and define the function
$$
\varphi: \R \to \R \; : \; \varphi(\beta)=1+\frac{t^{\beta}-1}{t^{\beta}+1} \frac{d(\beta, K)}{2(1+\beta^{2})} \; .
$$
Since $0\in K$, we have $d(\beta, K)\leq |\beta|$. By Lemma \ref{lem.main-technical}, the function $\varphi$ is realizable for the action $\Lambda \actson X$. By construction, $\varphi(\beta)=1$ if and only if $\beta \in K$. Proposition \ref{prop.generic-construction} then provides a continuous $1$-cocycle $\Gamma \times X^{\Z} \to \R$ with KMS spectrum $K$ and a unique $\beta$-conformal measure $\mu_{\beta}$ for each $\beta \in K$, which is also essentially free. By construction, the action $\Gamma \actson X^{\Z}$ is minimal, uniquely ergodic and topologically free. Using Lemma \ref{lemma.cstaralgeba}, the theorem follows.
\end{proof}

The group $\Lambda$ and the space $X$ appearing in Theorem \ref{thm.all-closed-sets-amenable} give rise to a crossed product that can be interpreted as a UHF algebra. We can thus reformulate Theorem \ref{thm.all-closed-sets-amenable} in C$^*$-algebraic language as a result on KMS states on the crossed product C$^*$-algebra of the Bernoulli shift on the infinite tensor product of an UHF algebra.

\begin{theorem} \label{thmuhfalg}
Let $\mathcal{A}$ be an infinite dimensional UHF algebra. Set $\mathcal{B} = \bigotimes_{n\in \Z} \mathcal{A}$ and consider the Bernoulli action of $\Z$ on $\mathcal{B}$.
For any closed set $K \subset \R$ with $0 \in K$ there exists a continuous $1$-parameter group of automorphisms on $\mathcal{B} \rtimes \Z$ such that the KMS spectrum equals $K$.
\end{theorem}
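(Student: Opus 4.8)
The plan is to derive Theorem~\ref{thmuhfalg} directly from Theorem~\ref{thm.all-closed-sets-amenable} by identifying the relevant crossed product with a UHF-based construction. First I would recall that an infinite dimensional UHF algebra $\cA$ is a supernatural-number UHF algebra, and that it can be written as $\cA \cong \bigotimes_{n \in \N} M_{k_n}(\C)$ for a suitable sequence of integers $k_n \geq 2$ (with the supernatural number of $\cA$ determining, up to reordering and regrouping, the multiset of prime factors). The key observation is that $M_{k}(\C) \cong C(\Lambda_k) \rtimes_r \Lambda_k$ whenever $\Lambda_k$ is a finite group of order $k$ acting on itself by translation --- for instance $\Lambda_k = \Z/k\Z$ --- since the reduced crossed product of a finite group acting freely and transitively on itself is the algebra of $k \times k$ matrices. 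Hence, with $\Lambda = \bigoplus_{n \in \N} \Lambda_n$ acting by translation on $X = \prod_{n \in \N} \Lambda_n$ as in Theorem~\ref{thm.all-closed-sets-amenable}, we get a natural isomorphism $C(X) \rtimes_r \Lambda \cong \bigotimes_{n \in \N} (C(\Lambda_n) \rtimes_r \Lambda_n) \cong \bigotimes_{n \in \N} M_{|\Lambda_n|}(\C) \cong \cA$, choosing the $\Lambda_n$ so that $\bigotimes_n M_{|\Lambda_n|}(\C)$ has the same supernatural number as $\cA$ (which is possible for any infinite dimensional UHF algebra).

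Next I would make the Bernoulli picture precise. The wreath product $\Gamma = \Lambda \wr \Z = \Lambda^{(\Z)} \rtimes \Z$ acts on $Y = X^\Z$, and the crossed product decomposes as an iterated crossed product
$$
C(Y) \rtimes_r \Gamma \cong \bigl( C(X^\Z) \rtimes_r \Lambda^{(\Z)} \bigr) \rtimes \Z \cong \Bigl( \bigotimes_{n \in \Z} \bigl( C(X) \rtimes_r \Lambda \bigr) \Bigr) \rtimes \Z = \Bigl( \bigotimes_{n \in \Z} \cA \Bigr) \rtimes \Z = \cB \rtimes \Z \; ,
$$
where the $\Z$-action on the left is the shift on $Y$, which under the identifications is exactly the Bernoulli shift on $\cB = \bigotimes_{n \in \Z} \cA$. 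Here one uses that the reduced crossed product of a direct sum of groups acting diagonally on a product space is the (minimal/spatial) tensor product of the individual reduced crossed products, together with the fact that $C(X^\Z) = \bigotimes_{n \in \Z} C(X)$, and that the semidirect product structure of $\Gamma$ over $\Z$ passes to an outer $\Z$-action on the tensor product that permutes the tensor factors. These are standard C$^*$-algebraic identities; I would state them with brief justification rather than full detail.

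Finally, I would transport the conclusion of Theorem~\ref{thm.all-closed-sets-amenable}: given a closed set $K \subset \R$ with $0 \in K$, that theorem produces a continuous $1$-cocycle $\Om : \Gamma \times Y \to \R$ whose associated diagonal $1$-parameter group $\al^\Om$ on $C(Y) \rtimes_r \Gamma$ has KMS spectrum exactly $K$. Under the isomorphism $C(Y) \rtimes_r \Gamma \cong \cB \rtimes \Z$ constructed above, $\al^\Om$ becomes a continuous $1$-parameter group of automorphisms of $\cB \rtimes \Z$, and since the KMS spectrum is an invariant of the pair (C$^*$-algebra, $1$-parameter group) up to isomorphism, its KMS spectrum is still $K$. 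This proves the theorem. I expect the main obstacle to be purely bookkeeping: carefully matching the supernatural number of an arbitrary infinite dimensional UHF algebra to one realized by a sequence of finite group orders (trivial, since every supernatural number with infinitely many prime factors counted with multiplicity arises, and an infinite dimensional UHF algebra is exactly one with infinitely many such factors), and verifying that the tensor-product and crossed-product identifications are compatible with the shift in the way claimed --- in particular that no subtlety with reduced versus full crossed products arises, which is fine here because $\Z$ and the $\Lambda_n$ are amenable so reduced and full crossed products coincide.
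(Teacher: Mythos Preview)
Your proposal is correct and follows essentially the same route as the paper: identify $\cA$ with $C(X)\rtimes_r\Lambda$ for a product group $\Lambda=\bigoplus_n\Lambda_n$ acting on $X=\prod_n\Lambda_n$ (the paper does this via a Bratteli presentation $\cA=\overline{\bigcup M_{k_n}(\C)}$ with $|\Lambda_n|=k_n/k_{n-1}$, which is just another way of matching supernatural numbers), then use the iterated crossed product identification $C(X^\Z)\rtimes_r(\Lambda\wr\Z)\cong\cB\rtimes\Z$ to transport the $1$-cocycle from Theorem~\ref{thm.all-closed-sets-amenable}. The bookkeeping concerns you flag (tensor-product/crossed-product compatibility with the shift, reduced vs.\ full) are exactly the ones the paper passes over as standard.
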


\begin{proof}
Let $\{k_{n}\}_{n=1}^{\infty}$ be a strictly increasing sequence of integers such that $k_{n}$ divides $k_{n+1}$ for all $n \in \N$ and such that
$$
\mathcal{A} = \overline{  \bigcup_{n=1}^{\infty}M_{k_{n}}(\C) } \: .
$$
Set $k_{0}=1$ and let $\Lambda_{n}$ be a finite group with $|\Lambda_{n}|=k_{n}/k_{n-1}$ for all $n\in \N$. Then $\mathcal{A} \simeq C(X) \rtimes_{r} \Lambda$ where $X$ and $\Lambda$ are defined as in Theorem \ref{thm.all-closed-sets-amenable}. We thus have a canonical isomorphism
$$
C(X^{\Z}) \rtimes_{r} \Lambda^{(\Z)} \simeq \bigotimes_{n\in \Z} \Big( C(X) \rtimes_{r} \Lambda \Big)  \simeq \bigotimes_{n\in \Z}  \mathcal{A} = \mathcal{B} \; .
$$
The Bernoulli shift on $\mathcal{B}$ corresponds to an action $\alpha$ of $\mathbb{Z}$ on $C(X^{\Z}) \rtimes_{r} \Lambda^{(\Z)}$ such that
$$
\Big [C(X^{\Z}) \rtimes_{r} \Lambda^{(\Z)} \Big] \rtimes_{\alpha} \mathbb{Z} \simeq C(X^{\Z}) \rtimes_{r} (\Lambda^{(\Z)} \rtimes \Z ) \; .
$$
Applying Theorem \ref{thm.all-closed-sets-amenable}, the theorem is proven.
\end{proof}

\subsection{KMS spectra for actions of non-amenable groups}\label{sec.all-closed-sets-nonamenable}

To prove the second part of Theorem \ref{thm.all-closed-sets}, we start with the following counterpart of Proposition \ref{prop.generic-construction}.
The action that we construct is minimal, but not topologically free. To amend this we will add an extra space to the action. This is done in Lemma \ref{lemma.dummyaction} below.

\begin{proposition} \label{prop.cantorset}
Let $\Lambda_0$ be a nontrivial finite group and, for $i=1,2$, let $\Lambda_i \actson X_i$ be minimal actions of countable groups by homeomorphisms of a compact metric space. Put $\Lambda = \Lambda_0 \times \Lambda_1 \times \Lambda_2$ and $X = \Lambda_0 \times X_1 \times X_2$. Consider the product action of $\Lambda^{(\Z)}$ on $X^\Z$.

There exists a homeomorphism of $X^{\Z}$ such that the resulting minimal action of $G = \Lambda^{(\Z)} \ast \Z$ on $X^{\Z}$ has the following universality property: whenever for $i=1,2$, the functions $\vphi_i : \R \to (0,+\infty)$ are realizable by the actions $\Lambda_i \actson X_i$, there exists a continuous $1$-cocycle $G \times X^{\Z} \recht \R$ with KMS spectrum
$$K = \bigl\{\beta \in \R \bigm| \vphi_1(\beta) = |\Lambda_0|^{-1} \;\;\text{and}\;\; \vphi_2(\beta) = |\Lambda_0| \bigr\}$$
and with for every $\beta \in K$, a unique $\beta$-conformal probability measure $\mu_\beta$.
\end{proposition}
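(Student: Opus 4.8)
The plan is to follow the architecture of the proof of Proposition \ref{prop.generic-construction}, replacing the semidirect product $\Lambda \wr \Z = \Lambda^{(\Z)} \rtimes \Z$ by the free product $G = \Lambda^{(\Z)} \ast \Z$. The decisive structural point is that, in a free product, the extra generator $s$ of the $\Z$-factor may act by an \emph{arbitrary} homeomorphism $\phi_s$ of $X^\Z$, and a continuous $1$-cocycle on $G \times X^\Z$ is completely and freely determined by its restriction to $\Lambda^{(\Z)}$ together with the single function $\Omega(s,\cdot)$ (one iterates \eqref{eq.cocycle} along reduced words). Thus the construction reduces to two tasks: (i) build on the $\Lambda^{(\Z)}$-side a product $1$-cocycle whose unique $\beta$-conformal probability measure $\mu_\beta$ is an explicit product measure, exactly as in Proposition \ref{prop.generic-construction}; and (ii) choose $\phi_s$ and $\Omega(s,\cdot)$ so that $\mu_\beta$ satisfies \eqref{eq.conformint} for $g=s$ precisely when $\vphi_1(\beta)=|\Lambda_0|^{-1}$ and $\vphi_2(\beta)=|\Lambda_0|$. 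Once this is achieved, Lemma \ref{lem.genconformal} gives that $\mu_\beta$ is $\beta$-conformal for $G$ iff $\beta\in K$, and uniqueness of the $\Lambda^{(\Z)}$-conformal measure forces that these are the only $\beta$-conformal measures for $G$.

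For task (i): realizability of $\vphi_i$ by $\Lambda_i \actson X_i$ yields continuous $1$-cocycles $\Omega_i^0 : \Lambda_i \times X_i \recht \R$, continuous functions $H_i : X_i \recht (0,+\infty)$, and unique $\be$-conformal measures $\nu_{i,\be}$ with $\vphi_i(\be)=\int_{X_i} H_i^\be \, d\nu_{i,\be}$. As in Proposition \ref{prop.generic-construction} one also forms the cohomologous cocycle $\Omega_i^1(\lambda,x)=\log H_i(\lambda\cdot x)-\log H_i(x)+\Omega_i^0(\lambda,x)$, whose unique $\be$-conformal measure $\eta_{i,\be}$ satisfies $d\eta_{i,\be}/d\nu_{i,\be}=\vphi_i(\be)^{-1} H_i^\be$ by Lemma \ref{lemma.cohomologous}. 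On the $\Lambda_0$-coordinate we take the trivial cocycle, whose unique $\be$-conformal probability measure is the uniform measure $u_0$ on $\Lambda_0$ for \emph{every} $\be$. Identifying $X^\Z\cong\Lambda_0^\Z\times X_1^\Z\times X_2^\Z$, we put on $\Lambda^{(\Z)}\times X^\Z$ the direct-sum cocycle that uses $\Omega_i^1$ on one half-line of coordinates and $\Omega_i^0$ on the complementary half-line (chosen so that $\phi_s$ will transport $\eta_{i,\be}$-mass into $\nu_{i,\be}$-mass), and the trivial cocycle on every $\Lambda_0$-coordinate. Iterating Lemma \ref{lemma.prod-measure} exactly as before, the unique $\be$-conformal probability measure for $\Lambda^{(\Z)}\actson X^\Z$ is a product measure $\mu_\be$ equal to $u_0^{\otimes\Z}$ in the $\Lambda_0$-coordinates and to the mixed $\eta_{i,\be}$/$\nu_{i,\be}$ products in the $X_i$-coordinates.

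Task (ii) is where the finite group $\Lambda_0$ does its work, and it is the \textbf{main obstacle}: a single shift would, as in Proposition \ref{prop.generic-construction}, produce only one scalar constraint, whereas we must extract a \emph{conjunction} of two constraints from the single generator $s$. The idea is to split $X^\Z$ into the two clopen pieces $A$ and $A^c$ according to whether the $0$-th $\Lambda_0$-coordinate equals the identity $e_0$, so that $\mu_\be(A)=|\Lambda_0|^{-1}$ and $\mu_\be(A^c)=1-|\Lambda_0|^{-1}>0$, and to let $\phi_s$ act by two \emph{different} shift-type homeomorphisms on $A$ and on $A^c$ (with $\phi_s(A)$ and $\phi_s(A^c)$ again a clopen partition of $X^\Z$): on $A$ the map transports $\eta_{1,\be}$-data to $\nu_{1,\be}$-data, while the simultaneous rearrangement of the $\Lambda_0$-coordinates contributes a constant Jacobian of $|\Lambda_0|^{\pm1}$; on $A^c$ it does the same with $\eta_{2,\be}$, $\nu_{2,\be}$ and the opposite $\Lambda_0$-Jacobian. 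Taking $\Omega(s,\cdot)$ to be the corresponding piecewise $\pm\log H_i$-type function (absorbing the $H_i^\be$-factors), the Radon--Nikodym identity underlying \eqref{eq.conformint} for $g=s$ holds on $A$ iff the residual constant factor $\vphi_1(\be)\,|\Lambda_0|$ equals $1$, i.e.\ iff $\vphi_1(\be)=|\Lambda_0|^{-1}$, and holds on $A^c$ iff $\vphi_2(\be)\,|\Lambda_0|^{-1}=1$, i.e.\ iff $\vphi_2(\be)=|\Lambda_0|$; since both pieces carry positive $\mu_\be$-mass, $\mu_\be$ is $\be$-conformal for $s$ exactly when $\be\in K$. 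The real labour here is to write $\phi_s$ down explicitly as a genuine homeomorphism of $X^\Z$ — verifying that the images of the two clopen pieces tile $X^\Z$ and that the displacements produced by iterating $\phi_s$ stay controlled — and to carry out the two local Radon--Nikodym computations, both of which are of the same elementary nature as the shift computation in Proposition \ref{prop.generic-construction}.

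Two further points complete the argument, both routine. \emph{Uniqueness:} any $\be$-conformal probability measure for $G$ restricts to one for $\Lambda^{(\Z)}$, which must be $\mu_\be$; hence for $\be\in K$ the measure $\mu_\be$ is the unique $\be$-conformal measure, and for $\be\notin K$ there is none. \emph{Minimality of $G\actson X^\Z$:} the $\Lambda^{(\Z)}$-orbits change only finitely many coordinates, so minimality must come from $s$; one designs $\phi_s$ — using its shift-type behaviour on each piece of the partition together with minimality of $\Lambda_i\actson X_i$ and density of the $\Lambda^{(\Z)}$-action on finite subproducts — so that the $G$-orbit of every point is dense, just as the shift guarantees minimality of the wreath-product action in Theorem \ref{thm.all-closed-sets-amenable}. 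Assembling these gives the stated universality property; topological freeness is not asserted here and is handled afterwards in Lemma \ref{lemma.dummyaction}.
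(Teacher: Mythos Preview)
Your overall architecture matches the paper's: build the product cocycle on $\Lambda^{(\Z)}$ so that for every $\beta$ there is a unique $\beta$-conformal product measure $\mu_\beta = \mu_0 \times \mu_{1,\beta} \times \mu_{2,\beta}$ on $\Lambda_0^\Z \times X_1^\Z \times X_2^\Z$, then design the extra homeomorphism and $\Omega(s,\cdot)$ so that the two scalar constraints $\varphi_1(\beta) = |\Lambda_0|^{-1}$ and $\varphi_2(\beta) = |\Lambda_0|$ emerge from the Radon--Nikodym identity on disjoint clopen pieces of $\Lambda_0^\Z$. You correctly isolate this as the main obstacle and see that the $\Lambda_0$-factor is precisely what decouples the two constraints; the uniqueness and minimality arguments you sketch are the paper's as well.

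The gap is in the two-piece implementation. If $\phi_s$ acts on $A$ and $A^c$ by a $\Lambda_0^\Z$-rearrangement with constant $\mu_0$-Jacobian together with a shift on one $X_i^\Z$-factor, then (since the $X_i^\Z$-shifts are bijections) the $\Lambda_0^\Z$-images of $A$ and $A^c$ must themselves partition $\Lambda_0^\Z$. With constant Jacobians $c_1,c_2\in\{|\Lambda_0|^{-1},|\Lambda_0|\}$ this forces $c_1\,\mu_0(A)+c_2\,(1-\mu_0(A))=1$, hence $\mu_0(A)\in\{\,|\Lambda_0|/(|\Lambda_0|+1),\ 1/(|\Lambda_0|+1)\,\}$. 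But every clopen subset of $\Lambda_0^\Z$ has $\mu_0$-measure in $\Z[|\Lambda_0|^{-1}]$, and since $\gcd(|\Lambda_0|,|\Lambda_0|+1)=1$ these values never lie there. So a two-piece $\phi_s$ with the required Jacobians cannot exist; in particular your choice $A=\{x_0=e_0\}$ (with $\mu_0(A)=|\Lambda_0|^{-1}$) cannot be completed to a homeomorphism.

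The paper fixes this by using a \emph{three}-piece clopen partition of $\Lambda_0^\Z$,
\[
Y_0=\{x_0=x_1=e\},\qquad Y_1=\{x_0\neq e\},\qquad Y_2=\{x_0=e,\ x_1\neq e\},
\]
and letting the homeomorphism be the \emph{identity} on $Y_0\times X_1^\Z\times X_2^\Z$, while on $Y_1$ it applies an ``insert an $e$ at position $0$'' map $\theta_0:Y_1\to Y_2$ (Jacobian $|\Lambda_0|^{-1}$) together with a shift on one $X_i^\Z$, and on $Y_2$ the inverse $\theta_0^{-1}:Y_2\to Y_1$ (Jacobian $|\Lambda_0|$) together with a shift on the other $X_i^\Z$. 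The identity piece $Y_0$ absorbs exactly the measure deficit that obstructs your two-piece scheme, and the two constraints then come out of $Y_1$ and $Y_2$ precisely as you intended.
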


\begin{proof}
Partition $\Lambda_0^{\Z}$ into the clopen sets
$$Y_0 = \{x \in \Lambda_0^\Z \mid x_0 = x_1 = e\} \quad , \quad Y_1 = \{x \in \Lambda_0^\Z \mid x_0 \neq e\} \quad ,
\quad Y_2 = \{x \in \Lambda_0^\Z \mid x_0 = e , x_1 \neq e \} \; .$$
Define the homeomorphism $\theta_0 : Y_1 \to Y_2$ by
$$(\theta_0(x))_n = \begin{cases} x_n &\;\;\text{if $n < 0$,}\\ e &\;\;\text{if $n=0$,}\\ x_{n-1} &\;\;\text{if
$n > 0$.}\end{cases}$$
For $i=1,2$, let $\psi_i$ be the shift map on $X_i^{\Z}$. View $X^\Z = \Lambda_0^\Z \times X_1^\Z \times X_2^\Z$. Define the homeomorphism $\theta: X^{\Z}\to X^{\Z}$ by
$$
\theta(x,y,z) =
\begin{cases}
(x,y,z) &\;\;\text{if $x \in Y_0$,} \\
(\theta_0(x),y,\psi_2(z)) &\;\;\text{if $x \in Y_1$,}\\
(\theta_0^{-1}(x),\psi_1(y),z) &\;\;\text{if $x \in Y_2$.}
\end{cases}
$$
We combine the product action $\Lambda^{(\Z)} \actson X^\Z$ and the homeomorphism $\theta$ into a minimal action of $\Lambda^{(\Z)} \ast \Z \actson X^{\Z}$.

By definition, for $i=1,2$, we have continuous $1$-cocycles $\Om'_i : \Lambda_i \times X_i \to \R$ and continuous functions $H_i : X_i \to (0,+\infty)$ such that the following holds. For every $\be \in \R$, there is a unique $\be$-conformal probability measure $\nu_{i,\be}$ on $X_i$ and
$$\vphi_i(\beta) = \int_{X_i} H_i^\be \; d\nu_{i,\beta} \; .$$
Define the $1$-cocycles $\Om_i : \Lambda_i^{(\Z)} \times X_i^{\Z} \to \R$ by
$$
\Om_i(\pi_{n}(\lambda), x) =
\begin{cases}
\Omega_{i}'(\lambda, x_{n}) \ & \ \text{ if } n\geq 0 \; , \\
\Omega_{i}'(\lambda, x_{n})+\log(H_{i}(\lambda \cdot x_{n}))-\log(H_{i}(x_{n}))  \ & \ \text{ if } n< 0 \; ,
\end{cases}
$$
for $\lambda \in \Lambda_{i}$, where $\pi_n : \Lambda_i \to \Lambda_i^{(\Z)}$ denotes the embedding as $n$-th direct summand. For every $\beta \in \R$, there is a unique $\beta$-conformal measure $\mu_{i, \beta}$ on $X_{i}^{\Z}$, and this measure satisfies
$$
\frac{d((\psi_{i})_{*} \mu_{i, \beta})}{d\mu_{i, \beta}}(x) = \vphi_i(\beta)^{-1} \, H_i(x_0)^\beta \quad\text{for all $x \in X_i^\Z$.}$$

Let $a\in \Lambda^{(\Z)} \ast \Z$ denote the element $-1\in \Z$. Viewing $\Lambda^{(\Z)}$ as the direct product $\Lambda_0^{(\Z)} \times \Lambda_1^{(\Z)} \times \Lambda_2^{(\Z)}$ and viewing $X^\Z$ as the product $\Lambda_0^\Z \times X_1^\Z \times X_2^\Z$, uniquely define the continuous $1$-cocycle $\Om : (\Lambda^{(\Z)} \ast \Z) \times X^{\Z} \recht \R$ by
$$\Om((g,h,k),(x,y,z)) = \Om_1(h,y) + \Om_2(k,z) \quad\text{and}\quad \Om(a,(x,y,z)) = \begin{cases} 0 &\;\;\text{if $x \in Y_0$,}\\
\log(H_1(y_0)) &\;\;\text{if $x \in Y_1$,}\\
\log(H_2(z_0)) &\;\;\text{if $x \in Y_2$.}\end{cases}$$
Denote by $\nu_0$ the uniform probability measure on $\Lambda_0$ and put $\mu_0 = \nu_0^\Z$. For every $\beta \in \R$, there is a unique $\beta$-conformal measure for $\Lambda^{(\Z)} \actson X^{\Z}$ and this is given by $\mu_\beta = \mu_0 \times \mu_{1,\beta} \times \mu_{2,\beta}$. By Lemma \ref{lem.genconformal}, the KMS spectrum for the action $G \actson X^\Z$ and the $1$-cocycle $\Om$ thus consists of all $\beta \in \R$ such that
$$\frac{d(\theta_*\mu_\beta)}{d\mu_\beta}(x,y,z) = \exp(\beta \cdot \Om(a,(x,y,z)))$$
for $\mu_\be$-a.e.\ $(x,y,z) \in X^{\Z}$. By construction, we have
$$
\frac{d(\theta_*\mu_\beta)}{d\mu_\beta}(x,y,z)
= \begin{cases}\displaystyle 1 &\;\;\text{if $x \in Y_0$,}\\
|\Lambda_{0}|^{-1} \, \vphi_1(\beta)^{-1} \, H_1(y_0)^\beta &\;\;\text{if $x \in Y_1$,}\\
|\Lambda_{0}| \, \vphi_2(\beta)^{-1} \, H_2(z_0)^\be &\;\;\text{if $x \in Y_2$.}\end{cases}$$
The KMS spectrum is thus given by
$$K = \bigl\{\beta \in \R \bigm| \vphi_1(\beta) = |\Lambda_{0}|^{-1} \;\;\text{and}\;\; \vphi_2(\beta) = |\Lambda_{0}| \bigr\} \; .$$
This concludes the proof of the proposition.
\end{proof}

The action of $G$ in Proposition \ref{prop.cantorset} has most of the properties required in the second part of Theorem \ref{thm.all-closed-sets}, but it is not topologically free. In fact the action has a non-trivial isotropy group at every point of the space. To amend this, we need the following generic construction to obtain a topologically free action without changing the KMS spectrum.

\begin{lemma} \label{lemma.dummyaction}
Assume $\Gamma \actson Z$ is a free action of a countable group by homeomorphisms on a compact space $Z$ with a unique invariant probability measure $\mu$. Assume $\Gamma_{0} \subset \Gamma$ is a subgroup such that the restricted action $\Gamma_{0} \actson Z$ is minimal with unique invariant probability measure $\mu$.

Assume $G \actson X$ and $\pi: \Gamma \to G$ is a surjective group homomorphism with $\Gamma_{0} \subset \Ker \pi$. Let $\Om : G \times X \recht \R$ be a continuous $1$-cocycle. Define the action $\Gamma \actson Z \times X : g \cdot (z,x) = (g \cdot z, \pi(g) \cdot x)$. Then, $\Gamma \curvearrowright Z \times X$ is free.
\begin{enumlist}
\item If $G \actson X$ is minimal then $\Gamma \actson Z \times X$ is minimal.
\item The map $\Om_1 : \Gamma \times (Z \times X) \recht \R : \Om_1(g,(z,x)) = \Om(\pi(g),x)$ is a continuous $1$-cocycle.
\item The $e^{\be \cdot \Om_1}$-conformal probability measures are precisely the measures of the form $\mu \times \nu$ where $\nu$ is an $e^{\be \cdot \Om}$-conformal probability measure.
\item The KMS spectra of $(G \actson X,\Om)$ and $(\Gamma \actson Z \times X,\Om_1)$ coincide.
\end{enumlist}
\end{lemma}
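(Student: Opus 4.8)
The plan is to verify the four claims in order, with the freeness of $\Gamma \actson Z\times X$ as the preliminary observation and item 3 as the technical core. Freeness is immediate: if $g\cdot(z,x)=(z,x)$ then in particular $g\cdot z=z$, and since $\Gamma \actson Z$ is free this forces $g=e$. For item 1, suppose $G \actson X$ is minimal; to show $\Gamma\actson Z\times X$ is minimal it suffices to show that the orbit of an arbitrary $(z_0,x_0)$ is dense. Given a basic open set $U\times V$ with $U\subset Z$, $V\subset X$ nonempty open, first use minimality of $G\actson X$ to find $g\in\Gamma$ with $\pi(g)\cdot x_0\in V$; then the point $(g\cdot z_0,\pi(g)\cdot x_0)$ has $X$-coordinate in $V$, and we may further move it into $U$ using an element of $\Gamma_0\subset\Ker\pi$, exploiting minimality of $\Gamma_0\actson Z$ (which fixes the $X$-coordinate). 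Composing the two group elements lands the orbit of $(z_0,x_0)$ in $U\times V$. Item 2 is a routine check: $\Om_1(g,(z,x))=\Om(\pi(g),x)$ is continuous because $\pi$ is a homomorphism between discrete groups and $\Om$ is continuous, and the cocycle identity \eqref{eq.cocycle} for $\Om_1$ follows by applying the cocycle identity for $\Om$ after noting that the $X$-coordinate of $g\cdot(z,x)$ is $\pi(g)\cdot x$ and $\pi$ is multiplicative.

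The heart of the argument is item 3. One direction is easy: if $\nu$ is $e^{\be\cdot\Om}$-conformal on $X$, then $\mu\times\nu$ is $e^{\be\cdot\Om_1}$-conformal, which one checks directly against \eqref{eq.conformint} using that $\mu$ is $\Gamma$-invariant (so the $Z$-integral is unchanged) and $\Om_1$ depends only on the $X$-coordinate — this is essentially the product-measure computation already carried out in Lemma \ref{lemma.prod-measure} with the first cocycle trivial. For the converse, let $\eta$ be an arbitrary $e^{\be\cdot\Om_1}$-conformal probability measure on $Z\times X$. Pushing forward along the projection $\pi_Z : Z\times X\to Z$ and using that $\Om_1(g,\cdot)=\Om(\pi(g),\cdot)$ has no dependence on $z$, one sees that $(\pi_Z)_*\eta$ is $\Gamma$-invariant, hence equals $\mu$ by the uniqueness hypothesis on $\Gamma_0\subset\Gamma$. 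The main obstacle is then to upgrade ``the $Z$-marginal is $\mu$'' to ``$\eta$ is a product $\mu\times\nu$''. Here the natural strategy mirrors the surjectivity argument in the proof of Lemma \ref{lemma.prod-measure}: for a fixed continuous $F : X\to[1,+\infty)$ with $\rho=\int F\,d\nu$ (where $\nu:=(\pi_X)_*\eta$, which is $e^{\be\cdot\Om}$-conformal since $\Om_1$ only sees the $X$-coordinate), the formula $U\mapsto \rho^{-1}\int_{Z\times U}F\circ\pi_X\,d\eta$ defines a probability measure on... wait — the asymmetry is that here it is the $Z$-side, not the $X$-side, that is rigid, so instead one fixes a continuous $F:Z\to[1,+\infty)$, forms the probability measure $V\mapsto (\int F\,d\mu)^{-1}\int_{V\times X}F\circ\pi_Z\,d\eta$ on... no: the correct move is to use rigidity of the $Z$-marginal. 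Concretely, fix continuous $G_0:X\to[1,\infty)$, set $\rho=\int_{Z\times X}G_0\circ\pi_X\,d\eta$, and check that $V\mapsto \rho^{-1}\int_{V\times X}G_0\circ\pi_X\,d\eta$ is a $\Gamma$-invariant probability measure on $Z$ (invariance uses $e^{\be\cdot\Om_1}$-conformality of $\eta$ together with $\Gamma$-invariance of the weight $G_0\circ\pi_X$ under the $Z$-action, since $\pi_X$ is $\Gamma$-equivariant via $\pi$ and $G_0\circ\pi_X$ is unchanged by... ) — the clean version is: $(\pi_Z)_*\eta=\mu$ already, and one needs the stronger statement that conditioning does not change the $Z$-marginal, which follows because $\Om_1$ is $\pi_Z$-invariant so every weighted pushforward to $Z$ is again $\Gamma$-invariant hence $=\mu$; then by linearity and density in $C(Z\times X)$ one concludes $\eta=\mu\times\nu$, exactly as at the end of the proof of Lemma \ref{lemma.prod-measure}.

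Finally, item 4 is a formal consequence of item 3 combined with Lemma \ref{lemma.states-conformal}: $\be$ lies in the KMS spectrum of $(G\actson X,\Om)$ iff there is an $e^{\be\cdot\Om}$-conformal probability measure $\nu$ on $X$, iff (by item 3) there is an $e^{\be\cdot\Om_1}$-conformal probability measure $\mu\times\nu$ on $Z\times X$, iff $\be$ lies in the KMS spectrum of $(\Gamma\actson Z\times X,\Om_1)$. I expect items 1, 2, and 4 and the ``easy'' direction of item 3 to be entirely routine; the only place requiring genuine care is the converse in item 3, where the product-decomposition argument from Lemma \ref{lemma.prod-measure} must be adapted to the present situation in which rigidity is supplied by the invariant measure $\mu$ on the $Z$-factor (via the hypothesis that $\Gamma_0\subset\Ker\pi$ already acts uniquely ergodically) rather than by uniqueness of a conformal measure on the first factor.
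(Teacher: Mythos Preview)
Your plan matches the paper's proof, and items 1, 2, 4 and the easy direction of 3 are fine. The one place that needs correction is the converse of item 3: your repeated claim that $(\pi_Z)_*\eta$ (or the weighted $Z$-marginals) are \emph{$\Gamma$-invariant} is not justified and is generally false. For $g\notin\Gamma_0$ conformality gives
\[
\eta\bigl((g\cdot V)\times X\bigr)=\eta\bigl(g\cdot(V\times X)\bigr)=\int_{V\times X} e^{\beta\,\Omega(\pi(g),x)}\,d\eta(z,x),
\]
which need not equal $\eta(V\times X)$, so ``$\Omega_1$ has no $z$-dependence'' is not enough. What \emph{is} true---and this is exactly the paper's one-line observation---is that $\eta$ is $\Gamma_0$-invariant: for $g\in\Gamma_0\subset\Ker\pi$ one has $\Omega_1(g,\cdot)=\Omega(e,\cdot)=0$ and $g$ acts trivially on the $X$-factor, so conformality becomes invariance of $\eta$ under the $\Gamma_0$-action on the first coordinate. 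Since $\mu$ is the unique $\Gamma_0$-invariant probability on $Z$, Lemma~\ref{lemma.prod-measure} (with $G_1=\Gamma_0$, $X_1=Z$, trivial cocycle, and $G_2$ trivial on $X_2=X$) applies directly and yields $\eta=\mu\times\nu$; there is no need to re-derive the product decomposition by hand. That $\nu$ is $e^{\beta\cdot\Omega}$-conformal then follows by testing conformality of $\eta$ against arbitrary $g\in\Gamma$ and using surjectivity of $\pi$.

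Your closing paragraph shows you have already isolated precisely this mechanism (``$\Gamma_0\subset\Ker\pi$ already acts uniquely ergodically''), so the fix is simply to replace ``$\Gamma$-invariant'' by ``$\Gamma_0$-invariant'' throughout the argument for item 3 and to invoke Lemma~\ref{lemma.prod-measure} rather than reproving it.
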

\begin{proof}
1. Let $S \subset Z \times X$ be closed, nonempty and $\Gamma$-invariant. Since $\Gamma_0 \actson Z$ is minimal, for every $(x,y) \in S$, we get that $Z \times \{y\} \subset S$. Thus, $S = Z \times L$ where $L \subset X$ is closed. Since $S$ is $\Gamma$-invariant, we have that $L$ is $G$-invariant. Also, $L \neq \emptyset$. Thus $L = X$.

2. This is a direct computation which we leave to the reader.

3. If $\nu$ is $e^{\be \cdot \Om}$-conformal, it is immediate that $\mu \times \nu$ is $e^{\be \cdot \Om_1}$-conformal. If $\eta$ is an $e^{\beta \cdot \Om_{1}}$-conformal probability measure on $Z \times X$ it follows from the definition of $\Omega_{1}$ that $\eta$ is $\Gamma_{0}$-invariant. By Lemma \ref{lemma.prod-measure} then $\eta = \mu \times \nu$, where $\nu$ is a probability measure on $X$. Expressing that $\eta$ is $e^{ \be \cdot \Omega_{1}}$-conformal gives that $\nu$ is $e^{\be \cdot \Om}$-conformal. Since $4$ follows from $3$ this concludes the proof of the lemma.
\end{proof}

We will use the following action $\Gamma \actson Z$ satisfying all the assumptions in Lemma \ref{lemma.dummyaction}. We denote the $p$-adic integers as $\Z_p$.

\begin{lemma}\label{lem.dense-embedding}
Let $p$ be an odd prime and put $Z = \SL(2,\Z_{p})$. Define
$$a = \begin{pmatrix} 1 & 2 \\ 0 & 1 \end{pmatrix} \quad , \quad b = \begin{pmatrix} 1 & 0 \\ 2 & 1 \end{pmatrix} \; .$$
The elements $g_1 = a^4$, $g_2 = b^4$ and $h_n = (ab)^n \, ba \, (ab)^{-n}$, $n \in \Z$, are free, generating a copy of $\Gamma = \F_\infty$ inside $Z$, and the subgroup $\langle g_1,g_2 \rangle \subset Z$ is dense.
\end{lemma}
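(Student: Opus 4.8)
I would prove the two assertions --- that the listed elements form a free basis, and that $\langle g_1,g_2\rangle$ is dense --- separately, as they are essentially unrelated. Density is quick and I would dispatch it first: since $p$ is odd, $8\in\Z_p^\times$, so $\{8k:k\in\Z\}$ is dense in $\Z_p$ and the closure of $\langle g_1\rangle=\langle a^4\rangle$ is the full group $U^+$ of upper unitriangular matrices over $\Z_p$; likewise $\overline{\langle g_2\rangle}=U^-$, the lower unitriangular subgroup. As $\Z_p$ is local, $\SL(2,\Z_p)$ is generated by elementary matrices, i.e.\ by $U^+\cup U^-$, so the closed subgroup $\overline{\langle g_1,g_2\rangle}$, containing $U^+$ and $U^-$, is all of $Z$. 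For the freeness, Sanov's theorem gives that $a,b$ generate a free group of rank $2$ inside $\SL(2,\Z)$, and $\SL(2,\Z)\hookrightarrow\SL(2,\Z_p)$ is injective, so I may argue entirely inside the abstract free group $F=\langle a,b\rangle\cong\F_2$ and show that $\{g_1,g_2\}\cup\{h_n:n\in\Z\}$ is a free basis of the subgroup $H$ it generates.

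The plan for this is to pass to the kernel of $\psi\colon F\recht\Z$ with $\psi(a)=1$, $\psi(b)=-1$, so that $\psi(g_1)=4$, $\psi(g_2)=-4$, $\psi(h_n)=0$. By the Nielsen--Schreier theorem with the Schreier transversal $\{a^k:k\in\Z\}$, $\Ker\psi$ is free on the elements $c_k:=a^kba^{1-k}$; note $c_0=ba$, $c_1=ab$ and $h_n=c_1^{\,n}c_0c_1^{-n}$. Since $\psi(H)=4\Z$ and $\langle g_1\rangle\cap\Ker\psi=\{e\}$, one has $H=H_0\rtimes\langle g_1\rangle$ with $H_0:=H\cap\Ker\psi$, and the Reidemeister--Schreier generating set of $H_0$ (transversal $\{g_1^k\}$) is $S=\{A_k:=g_1^kg_2g_1^{1-k}\}_{k\in\Z}\cup\{B_{k,n}:=g_1^kh_ng_1^{-k}\}_{k,n\in\Z}$. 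A short computation --- using $a^{4k}c_ja^{-4k}=c_{4k+j}$ and $b^4=c_0c_{-1}c_{-2}c_{-3}a^{-4}$ --- rewrites these inside $\F(\{c_k\})$ as $A_k=c_{4k}c_{4k-1}c_{4k-2}c_{4k-3}$ and $B_{k,n}=c_{4k+1}^{\,n}c_{4k}c_{4k+1}^{-n}$.

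The heart of the matter is then that $S$ is a free basis of $H_0$, which I would see as follows. The generators $c_j$ with $j\equiv2\pmod 4$ occur only among the $A_k$, each exactly once, so $\tau\colon c_{4k-2}\mapsto A_k$ (and $\tau=\mathrm{id}$ on the other $c_j$) defines an automorphism of $\F(\{c_k\})$; since $\tau^{-1}$ fixes every $c_j$ with $j\not\equiv2$, it maps $S$ to $\{c_{4k-2}\}_k\cup\{c_{4k+1}^{\,n}c_{4k}c_{4k+1}^{-n}\}_{k,n}$. This last set manifestly freely generates the subgroup it spans: the $c_{4k-2}$ form one free factor of $\F(\{c_k\})$ and appear nowhere else, while for each $k$ the family $\{c_{4k+1}^{\,n}c_{4k}c_{4k+1}^{-n}\}_n$ lies in the free factor $\F(c_{4k},c_{4k+1})$ (disjoint over $k$) and is a free basis of $\Ker\bigl(\F(c_{4k},c_{4k+1})\recht\Z\bigr)$ by Nielsen--Schreier again. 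Applying $\tau$, $S$ is a free basis of $H_0$. Finally, conjugation by $g_1$ shifts $A_k\mapsto A_{k+1}$, $B_{k,n}\mapsto B_{k+1,n}$, so from the presentation $H=\F(S)\rtimes\langle g_1\rangle=\langle S,g_1\mid g_1sg_1^{-1}=(\text{shift of }s)\rangle$ I eliminate (Tietze) the generators $A_k,B_{k,n}$ with $k\neq0$ to see that $H$ is freely generated by $\{g_1,A_0,B_{0,n}\}=\{g_1,\,g_2g_1,\,h_n:n\in\Z\}$, hence also by $\{g_1,g_2\}\cup\{h_n:n\in\Z\}$ after the change of basis $g_2g_1\mapsto g_2$.

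The step I expect to be the real obstacle is precisely this reduction to $\Ker\psi$: a direct attempt to check that $\{g_1,g_2\}\cup\{h_n\}$ is Nielsen reduced in $F$ does \emph{not} work --- e.g.\ the triple $(g_2^{-1},h_0,g_1^{-1})$ gives $g_2^{-1}h_0g_1^{-1}=b^{-3}a^{-3}$, of length $6=|g_2^{-1}|-|h_0|+|g_1^{-1}|$, violating the strict Nielsen inequality --- and with infinitely many generators the Nielsen reduction process is awkward to run. Rewriting everything inside $\F(\{c_k\})$, where the Reidemeister--Schreier generators acquire the transparent form above, is what makes freeness visible. (A more geometric alternative to the last two steps would be a ping-pong argument on the Bass--Serre tree of $F=\langle a\rangle\ast\langle b\rangle$ showing $H=\langle a^4\rangle\ast\langle b^4\rangle\ast\langle h_n:n\in\Z\rangle$, but identifying the ping-pong sets seems no easier.)
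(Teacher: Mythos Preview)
Your argument is correct. The density half is essentially the paper's: it too shows that the closure contains the full upper and lower unitriangular subgroups; it then observes that $\SL(2,\Z)$ therefore lies in the closure and is dense, rather than invoking generation of $\SL(2,\Z_p)$ by elementary matrices, but this is a cosmetic difference.

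For the freeness half you take a genuinely different and considerably longer route. The paper does not pass to $\Ker\psi$ or use Reidemeister--Schreier at all. Instead it introduces the two auxiliary elements $k_1=ab$ and $k_2=ba$ and first shows, by a short bare-hands check on which exponents of $a$ (and symmetrically of $b$) can occur after substitution and cancellation, that the four elements $g_1,g_2,k_1,k_2$ are free in $\F_2$. Since $h_n=k_1^{\,n}k_2k_1^{-n}$, the $h_n$ are then free inside the free factor $\langle k_1,k_2\rangle$ by the same classical fact you invoke, and as $g_1,g_2$ sit in the complementary free factor $\langle g_1\rangle*\langle g_2\rangle$ of $\langle g_1,g_2,k_1,k_2\rangle$, the set $\{g_1,g_2\}\cup\{h_n\}$ is free. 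This sidesteps exactly the obstacle you diagnose: the failure of $\{g_1,g_2,h_n\}$ to be Nielsen reduced is immaterial once one works through the intermediate four-element set instead. Your Reidemeister--Schreier argument via $\Ker\psi$ is systematic and would adapt cleanly to other exponent patterns, but for this particular statement the paper's ad~hoc step of inserting $ab$ and $ba$ gives a proof of only a few lines.
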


To prove Lemma \ref{lem.dense-embedding} consider the following easy result.

\begin{lemma}\label{lem.in-F2}
Let $\F_2$ be freely generated by $a,b \in \F_2$. Then the elements $g_1 = a^4$, $g_2 = b^4$ and $h_n = (ab)^n \, ba \, (ab)^{-n}$, $n \in \Z$, are free.
\end{lemma}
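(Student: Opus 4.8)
The statement is purely combinatorial group theory inside the free group $\F_2 = \langle a,b\rangle$: the elements $g_1 = a^4$, $g_2 = b^4$ and $h_n = (ab)^n\, ba\, (ab)^{-n}$, $n\in\Z$, freely generate a subgroup of $\F_2$. The natural tool is the ping-pong lemma (Table-Tennis Lemma), applied to the action of $\F_2$ on its Cayley graph, or equivalently on the set of reduced words; I would carry it out by tracking reduced forms. First I would record the reduced forms of the generators and their inverses: $g_1^{\pm 1} = a^{\pm 4}$, $g_2^{\pm 1} = b^{\pm 4}$, and $h_n^{\pm 1} = (ab)^n\, (ba)^{\pm 1}\, (ba)^{-n}$ — note that for $h_n$ this reduced word begins with $a$ (from $(ab)^n$ when $n>0$, or with $b^{-1}$ when $n<0$) and the conjugating prefix $(ab)^n$ has length $2|n|$, while the core $ba$ or $a^{-1}b^{-1}$ does not cancel into it. The key numeric point is that $g_1,g_2$ are \emph{fourth} powers, so any reduced product of them alternates blocks $a^{\pm 4}$ and $b^{\pm 4}$ of length $\ge 4$, which cannot be fully cancelled by the $h_n$'s whose ``syllable'' structure in $a,b$ has syllables of length $1$.

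Concretely, I would set up ping-pong sets as follows. Let $W$ be the set of nontrivial reduced words in $\F_2$. Define, for the generator $g_1$, the set $A_+$ of reduced words starting with $a$ and $A_-$ of those starting with $a^{-1}$; similarly $B_\pm$ for $g_2$ using $b^{\pm 1}$; and for each $n$, a set $C_n$ tailored to $h_n$ — e.g. the words whose reduced form, after stripping a maximal power of $(ab)$ or $(ab)^{-1}$, begins with $b$ or $a^{-1}$ in the appropriate pattern distinguishing it from the $A_\pm, B_\pm$ families. One then checks the ping-pong inclusions: $g_1^{\pm 1}$ maps everything outside $A_\mp$ into $A_\pm$, etc. The cleanest route is probably not the classical ping-pong sets but rather a direct normal-form argument: take a nontrivial reduced word $w = x_1 x_2 \cdots x_k$ in the abstract free group on the symbols $\{g_1,g_2,h_n\}$, substitute the $\F_2$-expressions, and show by induction on $k$ that after free reduction the resulting word in $a,b$ is nonempty — indeed that its length grows, because at each junction between consecutive syllables $x_i, x_{i+1}$ the cancellation is bounded (at most the conjugating prefixes $(ab)^{\pm n}$ of an $h$-syllable can be eaten, and the core $(ba)^{\pm1}$ survives, while $g_1,g_2$ syllables lose at most one or two letters at a junction with an $h$-syllable and nothing at a junction with each other).

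The main obstacle is bookkeeping the cancellation at junctions involving two different $h_n$'s, say $h_m^{\epsilon}$ followed by $h_n^{\delta}$ with $m\ne n$: here the relevant reduced form is $(ab)^m (ba)^\epsilon (ab)^{-m}(ab)^n (ba)^\delta (ab)^{-n}$, and the middle $(ab)^{-m}(ab)^n$ collapses to $(ab)^{n-m}$, so one must verify that the cores $(ba)^\epsilon$ and $(ba)^\delta$ are not subsequently annihilated — which holds because $(ba)^{\pm1}$ does not start or end with $a^{-1}b^{-1}$ or $b a$ in a way that would telescope into $(ab)^{n-m}$; the sign patterns work out since $ba$ begins with $b$ while $(ab)^{\pm1}$ begins with $a^{\pm1}$. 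The case $m = n$ but $\epsilon = -\delta$ is forbidden (that would make $h_m^\epsilon h_n^\delta = e$, not a reduced word in the abstract free group), and $m=n$, $\epsilon=\delta$ gives $(ab)^m (ba)^{2\epsilon}(ab)^{-m}$, clearly nontrivial. Once these finitely many junction types are checked, a Stallings-style or length-counting induction closes the argument. I expect this lemma to then feed directly into Lemma \ref{lem.dense-embedding}: the images in $\SL(2,\Z_p)$ inherit freeness because the subgroup $\langle a,b\rangle$ generated by the two unipotents is already free (a Sanov-type argument over $\Z_p$, using that $p$ is odd so that $2$ is a unit but the entries $2$ generate a proper congruence structure), and density of $\langle g_1,g_2\rangle = \langle a^4,b^4\rangle$ follows from strong approximation / the fact that $a,b$ topologically generate $\SL(2,\Z_p)$ together with a check that passing to fourth powers does not shrink the closure for odd $p$.
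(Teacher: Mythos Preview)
Your plan is sound in principle: a direct junction-by-junction cancellation count on the reduced forms of $g_1^{\pm1}$, $g_2^{\pm1}$, $h_n^{\pm1}$ will work, and your identification of the crucial point --- that the blocks $a^{\pm4}$ and $b^{\pm4}$ are long enough not to be swallowed by the length-$1$ syllables coming from the $h_n$'s --- is correct. (Two small slips: you wrote $(ba)^{-n}$ where you meant $(ab)^{-n}$ in the expression for $h_n^{\pm1}$, and at the boundaries there are mild mergers such as $\cdots ab\cdot ba\cdots = \cdots ab^{2}a\cdots$ that your description glosses over; these do not change the outcome but make the case analysis heavier than you indicate.)

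The paper sidesteps the infinite-family bookkeeping by a two-step reduction you did not consider. Set $k_1=ab$ and $k_2=ba$, so that $h_n=k_1^{\,n} k_2 k_1^{-n}$. First one checks that the \emph{four} elements $g_1,g_2,k_1,k_2$ are free: in any alternating word in these letters, each $a$-syllable of the resulting $a,b$-word has exponent among $\pm1,\pm2,4n,4n\pm1,4n\pm2$ with $n\neq 0$, hence never $0$ (and symmetrically for $b$), so no collapse occurs. Second, inside the rank-two free group $\langle k_1,k_2\rangle$ the conjugates $k_1^{\,n} k_2 k_1^{-n}$ are free by the standard fact about such families; combined with the freeness of $g_1,g_2$ from $\langle k_1,k_2\rangle$, the lemma follows. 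This trades your open-ended junction analysis for a single finite exponent check. Your direct approach, on the other hand, is self-contained and would adapt more readily if the $h_n$'s were replaced by a family not lying inside an obvious rank-two free subgroup.
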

\begin{proof}
Put $k_1 = ab$ and $k_2 = ba$. Below we prove that $g_1,g_2,k_1,k_2$ are free. Since $h_n = k_1^n k_2 k_1^{-n}$, it then follows that the elements $(h_n)_{n \in \Z}$ are free inside $\langle k_1 , k_2 \rangle$. So the conclusion of the lemma holds.

It remains to prove that $g_1,g_2,k_1,k_2$ are free. Consider a nonempty word in the letters $(g_1^n)_{n \neq 0}$, $(g_2^m)_{m \neq 0}$, $k_1$, $k_1^{-1}$, $k_2$, $k_2^{-1}$, where a letter of the form $g_1^n$ is never followed by a letter of the same form, where a letter of the form $g_2^m$ is never followed by a letter of the same form, where $k_1$ is never followed or preceded by $k_1^{-1}$, and where $k_2$ is never followed or preceded by $k_2^{-1}$. We have to prove that this word defines a nontrivial element $g$ of $\F_2$. When reducing $g$ as a word in $a,b$, the only powers of $a$ that appear are:
$$a^{\pm 1} \;\; , \;\; a^{\pm 2} \;\; , \;\; a^{4n} \;\;, \;\;  a^{4n \pm 1} \;\; , \;\; a^{\pm 1} \, a^{4n} \, a^{\pm 1} = a^{4n + i} \; , \; i \in \{0,2,-2\} \; ,$$
with $n \in \Z \setminus \{0\}$. None of these powers are $0$. A similar thing happens with the powers of $b$. It follows that $g \neq e$.
\end{proof}

\begin{proof}[{Proof of Lemma \ref{lem.dense-embedding}}]
The elements $a,b$ generate a free subgroup of $\SL(2,\Z)$ and $\SL(2,\Z) \subset Z$. So, $a,b$ are free elements in $Z$. It follows from Lemma \ref{lem.in-F2} that $g_1$, $g_2$, $(h_n)_{n \in \Z}$ are free.

It remains to prove that $\langle g_1,g_2 \rangle$ is a dense subgroup of $Z$. Denote by $L$ the closure of $\langle g_1,g_2 \rangle$. Note that for $n \in \Z$
$$g_1^n = \begin{pmatrix} 1 & 8n \\ 0 & 1 \end{pmatrix} \; .$$
Since $p$ is odd, we have that $8 \Z$ is dense in $\Z_{p}$. It follows that $\bigl(\begin{smallmatrix} 1 & 1 \\ 0 & 1 \end{smallmatrix}\bigr) \in L$. We similarly find that $\bigl(\begin{smallmatrix} 1 & 0 \\ 1 & 1 \end{smallmatrix}\bigr) \in L$. So, $\SL(2,\Z) \subset L$. Since $\SL(2,\Z)$ is a dense subgroup of $Z$, we conclude that $L = Z$.
\end{proof}

We have now collected all results to prove the second part of Theorem \ref{thm.all-closed-sets}.

\begin{proof}[Proof of \ref{thma2} in Theorem \ref{thm.all-closed-sets}]
Fix any nontrivial finite group $\Lambda_0$. Put $\Lambda_1 = \Lambda_2 = \Lambda_0^{(\N)}$ and consider the translation action of $\Lambda_i$ on $X_i = \Lambda_0^\N$, for $i=1,2$. We put $\Lambda = \Lambda_0 \times \Lambda_1 \times \Lambda_2$ and we consider the action of $G = \Lambda^{(\Z)} \ast \Z$ on the Cantor set $Y = X^{\Z}$ given by Proposition \ref{prop.cantorset}. Let $\F_{\infty}$ be the free group with generators $\{a_{n}\}_{n=1}^{\infty}$. Choose a surjective group homomorphism $\pi:\F_{\infty} \to G$ with $\langle a_{1}, a_{2} \rangle \subset \Ker \pi$. Consider the dense embedding of $\F_\infty$ into $Z = \SL(2,\Z_p)$ given by Lemma \ref{lem.dense-embedding}, where we identify $a_i = g_i$ for $i=1,2$ and $a_n = h_n$ for $n \geq 3$. We let $\F_\infty$ act on $Z$ by translation.

By Lemma \ref{lem.dense-embedding}, the action $\F_\infty \actson Z$ satisfies all the assumptions of Lemma \ref{lemma.dummyaction} w.r.t.\ the subgroup $\langle a_1,a_2 \rangle$ and the Haar measure $\mu$ on $Z$.

We claim that the diagonal action $\F_\infty \actson Z \times Y$ satisfies all the conclusions of the second part of Theorem \ref{thm.all-closed-sets}. Note that the action is free and minimal. Let $K \subset \R$ be a closed subset with $0 \not\in K$. By Lemma \ref{lemma.realizablefractions}, we can choose functions $\vphi_i : \R \to (0,+\infty)$ that are realizable for $\Lambda_i \actson X_i$ such that
$$\{\beta \in \R \mid  \vphi_1(\beta) = |\Lambda_0|^{-1}\} = K = \{\beta \in \R \mid \vphi_2(\beta) = |\Lambda_0| \} \; .$$

By Proposition \ref{prop.cantorset} there exists a continuous $1$-cocycle $\Omega: G \times Y \to \R$ with KMS spectrum $K$ and unique $\beta$-conformal measure for each $\beta \in K$. By Lemma \ref{lemma.dummyaction}, we also find a continuous $1$-cocycle for the action $\F_\infty \actson Z \times Y$ with KMS spectrum $K$ and unique $\beta$-conformal measure for every $\beta \in K$. Since this action is free, the result follows from Lemma \ref{lemma.cstaralgeba}.
\end{proof}

\section{Variation in the KMS state simplices} \label{section5}

Theorem \ref{thm.all-closed-sets} proves that $1$-parameter groups arising from continuous $1$-cocycles gives rise to the most extreme possible variation of KMS spectra. There is however only one $\beta$-KMS state for each $\beta$ in the spectrum. In this section we will strengthen Theorem \ref{thm.all-closed-sets} by proving that one can obtain an extreme variation in the size of the $\beta$-KMS simplices for varying $\beta$ in the KMS spectrum. We define $S_{\beta}$ to be the set of $\beta$-KMS states for a given $1$-parameter group on a unital C$^*$-algebra, and recall that $S_{\beta}$ is a simplex, c.f. Theorem 5.3.30 in \cite{BR}. We will prove the following result.

\begin{theorem}\label{thm.variation-of-simplex}
Let $(\Lambda_n)_{n \in \N}$ be a sequence of nontrivial finite groups. Put $\Lambda = \bigoplus_{n \in \N} \Lambda_n$ and consider the product action of $\Lambda$ on $X = \prod_{n\in \N} \Lambda_n$ by translation. Consider also the product action of $H=(\Z / 2\Z)^{(\N)}$ on $X_{0}=(\Z / 2\Z)^{\N}$ by translation.
\begin{enumlist}
\item\label{thmvar1} Consider the natural action of the wreath product $\Gamma = \Lambda \wr \Z = \Lambda^{(\Z)} \rtimes \Z$ on $X^\Z$. The minimal and topologically free action
$$
\Gamma\times H \actson X^{\Z}\times X_{0}
$$
has the following universality property: for every closed subset $K \subset \R$ with $0 \in K$, there exists a continuous $1$-cocycle $(\Gamma \times H) \times (X^{\Z}\times X_{0}) \recht \R$ whose KMS spectrum equals $K$, and such that for $\beta \in K\setminus\{0\}$ the set $S_{\beta}$ is an infinite dimensional Bauer simplex with $S_{\beta}$ not affinely homeomorphic to $S_{\beta'}$ for $\beta'\neq \beta$.
\item \label{thmvar2}$\F_{\infty} $ admits a minimal and free action on the Cantor set $Y$ with the following universality property: for every closed subset $K \subset \R$ with $0 \not\in K$, there exists a continuous $1$-cocycle $\F_{\infty} \times Y \recht \R$ whose KMS spectrum equals $K$, and for $\beta \in K$ then $S_{\beta}$ is an infinite dimensional Bauer simplex with $S_{\beta}$ not affinely homeomorphic to $S_{\beta'}$ for $\beta'\neq \beta$.
\end{enumlist}
\end{theorem}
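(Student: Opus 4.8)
\emph{Reduction for part \ref{thmvar1}.} The plan is to deduce both parts from the constructions already established, by tensoring with one extra system carrying the simplex variation. For part~\ref{thmvar1} I take the continuous $1$-cocycle $\Om_\Gamma : \Gamma \times X^\Z \recht \R$ provided by Theorem~\ref{thm.all-closed-sets-amenable} (with $\Gamma = \Lambda \wr \Z$): it has KMS spectrum $K$, a unique $\beta$-conformal probability measure $\nu_\beta$ for each $\beta \in K$, and $\Gamma \actson (X^\Z,\nu_\beta)$ is essentially free. I then construct (next paragraph) an auxiliary continuous $1$-cocycle $\Om_0 : H \times X_0 \recht \R$ that admits a $\beta$-conformal probability measure for \emph{every} $\beta \in \R$, and set $\Om = \Om_\Gamma \oplus \Om_0$ on $(\Gamma \times H) \times (X^\Z \times X_0)$. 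A conformal measure for a direct-sum cocycle on a product action projects to a conformal measure on each factor, so there is no $\beta$-conformal measure for $\beta \notin K$; and for $\beta \in K$, uniqueness of $\nu_\beta$ lets Lemma~\ref{lemma.prod-measure} identify the $\beta$-conformal measures of $\Om$ with $\{\nu_\beta\} \times S^0_\beta$, where $S^0_\beta$ is the set of $\beta$-conformal measures of $\Om_0$. Hence the KMS spectrum of $\alpha^\Om$ is $K$. As $H \actson X_0$ is free, every conformal measure of the product is essentially free for $\Gamma \times H$, so Lemma~\ref{lemma.cstaralgeba} yields an affine homeomorphism $S_\beta \cong S^0_\beta$ for $\beta \in K$. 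Finally $\Gamma \times H \actson X^\Z \times X_0$ is minimal (a product of minimal actions taken with the product group) and topologically free (since $\Gamma \actson X^\Z$ is topologically free and $H \actson X_0$ is free). So everything reduces to building $\Om_0$.

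\emph{The auxiliary cocycle (main step).} I must produce a continuous $1$-cocycle $\Om_0 : H \times X_0 \recht \R$ for the translation action $H = (\Z/2\Z)^{(\N)} \actson X_0 = (\Z/2\Z)^\N$ with: (i) a $\beta$-conformal probability measure for every $\beta \in \R$; (ii) for $\beta \neq 0$, $S^0_\beta$ an infinite-dimensional Bauer simplex; (iii) the $S^0_\beta$, $\beta \neq 0$, pairwise not affinely homeomorphic (with $S^0_0$ the single Haar measure). Since $H$ acts transitively on each coordinate, the conditional law of each coordinate under any conformal measure is pinned down by $\Om_0$; so no product cocycle, and nothing cohomologous to one, has more than one conformal measure, and $\Om_0$ must encode long-range interactions. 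I would use ``statistical-mechanical'' cocycles: identify $\N$ with a disjoint union $\bigsqcup_k V_k$ of vertex sets of countable graphs, write $X_0 = \prod_k (\Z/2\Z)^{V_k}$ and $H = \bigoplus_k H_k$, and take $\Om_0 = \bigoplus_k \Om_k$, where $\Om_k$ is the cocycle of a ferromagnetic pair potential $\sum_{\{i,j\}} J^{(k)}_{ij}\,(-1)^{x_i+x_j}$, explicitly $\Om_k(e_n,x) = -2(-1)^{x_n}\sum_j J^{(k)}_{nj}(-1)^{x_j}$; this is a genuine continuous cocycle as soon as $\sum_j |J^{(k)}_{nj}| < \infty$ for each $n$ (closedness is automatic, being the discrete gradient of the formal energy, and $\Om_k(e_n,\phi_{e_n}x) = -\Om_k(e_n,x)$ by inspection). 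Its $\beta$-conformal measures are exactly the Gibbs measures of $\beta$ times this potential, which exist by compactness, giving (i). Because the potential is a direct sum over $k$, the extreme $\beta$-conformal measures are the products of extreme Gibbs measures of the blocks, so $S^0_\beta$ is the Bauer simplex $\Prob\bigl(\prod_k \partial_e S^{(k)}_\beta\bigr)$. It remains to choose the graphs $V_k$ and the couplings $J^{(k)}$ so that this product space is infinite for all $\beta \neq 0$ and so that its homeomorphism type genuinely varies with $\beta$; finite-alphabet lattice ferromagnets alone do not suffice (their low-temperature extreme-Gibbs sets are always Cantor-type, hence $\beta$-independent up to homeomorphism), so one needs richer blocks whose set of extreme Gibbs measures carries $\beta$-dependent topology, together with a proof that the resulting boundaries are pairwise non-homeomorphic.

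\emph{The nonamenable case.} For part~\ref{thmvar2} the wreath-product construction cannot be used directly (when $0 \notin K$ neither factor above carries a conformal measure near $\beta = 0$), so I start from the ingredients of the proof of~\ref{thma2}: fix a nontrivial finite group $\Lambda_0$, put $\Lambda_1 = \Lambda_2 = \Lambda_0^{(\N)}$ acting on $X_i = \Lambda_0^\N$, let $G = \Lambda^{(\Z)} \ast \Z$ act on the Cantor set $Y$ as in Proposition~\ref{prop.cantorset}, and use Lemma~\ref{lemma.realizablefractions} to choose realizable $\vphi_1,\vphi_2$ so that the cocycle $\Om_Y : G \times Y \recht \R$ of Proposition~\ref{prop.cantorset} has KMS spectrum $K$ with a unique $\beta$-conformal measure for $\beta \in K$. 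Surject $\F_\infty$ onto $G \times H$ with a two-generator free subgroup in the kernel, embed $\F_\infty$ densely into $Z = \SL(2,\Z_p)$ through exactly that two-generator subgroup as in Lemmas~\ref{lem.dense-embedding} and~\ref{lem.in-F2}, and let $\F_\infty$ act on the Cantor set $Y' = Z \times Y \times X_0$ through $\F_\infty \to \SL(2,\Z_p)$ on $Z$ and through $\F_\infty \to G \times H$ on $Y \times X_0$. By Lemma~\ref{lemma.dummyaction} this action is free and minimal, and the cocycle obtained by pulling back $\Om_Y \oplus \Om_0$ has (by Lemmas~\ref{lemma.prod-measure} and~\ref{lemma.dummyaction}) KMS spectrum $K$ and, for each $\beta \in K$, a simplex of $\beta$-KMS states affinely homeomorphic to $S^0_\beta$; Lemma~\ref{lemma.cstaralgeba} applies since the action is free. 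This gives part~\ref{thmvar2} with the same simplices $S^0_\beta$.

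\emph{Main obstacle.} The reductions above only recombine available results, so the whole difficulty is in the second paragraph. The two delicate points are: (a) transitivity on coordinates forces $\Om_0$ out of the class of cocycles cohomologous to product cocycles — which is precisely where this theorem goes beyond Theorem~\ref{thm.all-closed-sets}; and (b) one must design the block geometry so that the homeomorphism type of $\partial_e S^0_\beta$ strictly increases in complexity across all of $\R \setminus \{0\}$ (in particular across an uncountable $K$), and prove that these extreme boundaries are pairwise non-homeomorphic — which requires producing continuum-many pairwise non-homeomorphic compact metric spaces as sets of extreme Gibbs measures varying measurably with the temperature.
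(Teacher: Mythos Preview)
Your reductions in the first and third paragraphs are correct and match the paper's proof almost exactly: tensor the unique-conformal-measure system from Theorem~\ref{thm.all-closed-sets-amenable} (resp.\ from the construction behind part~\ref{thma2} of Theorem~\ref{thm.all-closed-sets}) with an auxiliary system $(H \actson X_0,\Om_0)$, use Lemma~\ref{lemma.prod-measure} to identify the product conformal measures, and invoke Lemma~\ref{lemma.cstaralgeba} via essential freeness. For part~\ref{thmvar2} the paper takes a slightly different route than you do---it first applies part~\ref{thma2} of Theorem~\ref{thm.all-closed-sets} to get an $\F_\infty$-system, takes the product with $(H \actson X_0,\Om_0)$ to get an $\F_\infty \times H$-system, and only then applies Lemma~\ref{lemma.dummyaction} with $G = \F_\infty \times H$---but your variant (surjecting $\F_\infty$ onto $G \times H$ directly) works just as well.

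The genuine gap is your second paragraph. You try to build $\Om_0$ by hand via ferromagnetic pair potentials and you correctly diagnose that this is hard: producing continuum-many pairwise non-homeomorphic compact metric spaces as extreme boundaries of Gibbs simplices, parametrized by $\beta$, is a substantial project, and you do not complete it. The paper does \emph{not} attempt this. Instead it imports the auxiliary cocycle from Thomsen~\cite{T2}: Theorem~1.1 there already constructs a $1$-parameter group on the CAR algebra whose KMS simplices $S_\beta$ are, for $\beta \neq 0$, infinite-dimensional Bauer simplices that are pairwise not affinely homeomorphic. The paper's only additional work (Lemma~\ref{lemma.CAR-variation}) is to identify $C(X_0) \rtimes_r H$ with the CAR algebra in a diagonal-preserving way and to check, by inspecting the construction in \cite{T2}, that Thomsen's $1$-parameter group fixes the diagonal; Lemma~\ref{lem.diagonalfixing} then yields the desired continuous $1$-cocycle $\Om_0$. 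So the ``main obstacle'' you isolate is resolved in the paper by citation rather than by a new construction, and your Ising-model program, while a reasonable line of attack, is both unnecessary here and left unfinished.
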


To prove Theorem \ref{thm.variation-of-simplex} we will combine Theorem \ref{thm.all-closed-sets} with Theorem 1.1 in \cite{T2}. To do this we need the following interpretation of Theorem 1.1 in \cite{T2}.

\begin{lemma} \label{lemma.CAR-variation}
Consider the product action of $H=(\Z / 2\Z)^{(\N)}$ on $X_{0}=(\Z / 2\Z)^{\N}$ by translation. There is a continuous $1$-cocycle $\Omega: H \times X_{0}  \to \R$ such that the corresponding $1$-parameter group on
$$
C\big(X_{0}\big) \rtimes_{r} H
$$
satisfies that for all $\beta \neq 0$ the simplex $S_{\beta}$ of $\beta$-KMS states is an infinite dimensional Bauer simplex such that $S_{\beta}$ is not affinely homeomorphic to $S_{\beta '}$ for $\beta \neq \beta'$.
\end{lemma}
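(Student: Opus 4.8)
The plan is to recognize Lemma~\ref{lemma.CAR-variation} as a dynamical-systems reformulation of the classification results of Thomsen in \cite{T2}, where the relevant $1$-parameter group is realized on the CAR algebra (equivalently, the infinite tensor product of $2\times 2$ matrix algebras), and then to transport that construction into the crossed-product picture $C(X_0)\rtimes_r H$ using the identification $C((\Z/2\Z)^\N)\rtimes_r (\Z/2\Z)^{(\N)}\cong\bigotimes_{\N} M_2(\C)$ already exploited (for general UHF algebras) in the proof of Theorem~\ref{thmuhfalg}. First I would record this C$^*$-algebraic isomorphism explicitly and check that it is $H$-equivariantly compatible with the gauge-type $1$-parameter groups: a diagonal $1$-parameter group $\alpha^\Omega$ on $C(X_0)\rtimes_r H$ corresponds, under the isomorphism, to a product-type (``quasi-free'') $1$-parameter group on $\bigotimes_{\N}M_2(\C)$ determined by a sequence of positive numbers at each tensor slot, and conversely. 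Concretely, a continuous $1$-cocycle $\Omega:H\times X_0\to\R$ which, like the ones in Lemma~\ref{lem.main-technical} and Proposition~\ref{prop.generic-construction}, arises from a function on the base is equivalent to prescribing the modular eigenvalue at each tensor factor.

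The core input is Theorem~1.1 of \cite{T2}: for a suitable choice of eigenvalue sequence, the associated product-type $1$-parameter group on the CAR algebra has, for every $\beta\neq 0$, a $\beta$-KMS simplex $S_\beta$ that is an infinite-dimensional Bauer simplex, and moreover the simplices $S_\beta$ are pairwise non-isomorphic as $\beta$ varies (e.g.\ by exhibiting an invariant — such as the homeomorphism type of the extreme boundary, or a metric/entropy-type invariant — that genuinely depends on $\beta$). The steps are then: (i) fix the eigenvalue sequence from \cite{T2}; (ii) translate it into a continuous $1$-cocycle $\Omega:H\times X_0\to\R$ via the correspondence of Lemma~\ref{lem.diagonalfixing}, choosing $\Omega$ to be cohomologous to the ``diagonal weight'' cocycle whose unique $\beta$-conformal measure at tensor factor $n$ is the Bernoulli$(p_n^\beta, q_n^\beta)$-measure; (iii) verify, using Lemma~\ref{lemma.states-conformal} together with the C$^*$-isomorphism, that the $\beta$-KMS states of $\alpha^\Omega$ on $C(X_0)\rtimes_r H$ are exactly the $\beta$-KMS states of the product-type group on $\bigotimes_\N M_2(\C)$; (iv) conclude the simplex statements directly from \cite[Thm.~1.1]{T2}. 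Note that since the translation action $H\actson X_0$ is free, all conformal measures are automatically essentially free and Lemma~\ref{lemma.cstaralgeba} guarantees the bijection between conformal measures and KMS states is an affine homeomorphism, so no information about the simplex structure is lost in the translation.

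The \textbf{main obstacle} will be the bookkeeping in step (ii)–(iii): making sure that the specific $1$-parameter group in \cite{T2} is genuinely of the ``diagonal'' (fixing $C(X_0)$) form after the isomorphism, rather than one mixing the matrix factors, and that the KMS-simplex structure is preserved on the nose under the identification. In \cite{T2} the dynamics may be phrased in terms of a derivation or a sequence of inverse temperatures attached to a UHF or AF inductive limit; I would need to check that this coincides, up to conjugacy by an inner or approximately inner automorphism and up to cohomology of cocycles (Lemma~\ref{lemma.cohomologous}), with a gauge action coming from a function on $X_0=(\Z/2\Z)^\N$. Once that translation is pinned down, the non-isomorphism of the $S_\beta$ for distinct $\beta$ is simply inherited from \cite{T2} and nothing further is required; the infinite-dimensionality and the Bauer property (extreme boundary closed) likewise transfer verbatim. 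So the proof is essentially: \emph{cite \cite[Theorem~1.1]{T2}, carry it across the isomorphism $\bigotimes_\N M_2(\C)\cong C(X_0)\rtimes_r H$, and invoke Lemma~\ref{lemma.cstaralgeba}.}
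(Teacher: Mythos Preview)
Your broad strategy---identify $C(X_0)\rtimes_r H$ with the CAR algebra $\mathcal{C}$, invoke \cite[Theorem~1.1]{T2}, and transport back---matches the paper's. But your concrete steps (i)--(ii) assume that the $1$-parameter group in \cite{T2} is a simple quasi-free/product-type action determined by a single eigenvalue at each $M_2$-factor, and this is where your plan diverges from what is actually needed. The construction in \cite[Theorem~5.5]{T2} is more elaborate: it is built on a tensor product $\mathcal{C}\otimes\mathcal{C}$ and only then transported to $\mathcal{C}$ via an isomorphism $\mathcal{C}\otimes\mathcal{C}\simeq\mathcal{C}$. The paper therefore does not attempt to extract an explicit eigenvalue sequence. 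Instead it first records the equivariant bit-interleaving homeomorphism $X_0\cong X_0\times X_0$, $H\cong H\times H$, checks that the resulting isomorphism $\mathcal{C}\otimes\mathcal{C}\simeq\mathcal{C}$ carries (diagonal)$\,\otimes\,$(diagonal) densely into the diagonal, and then observes---by going through the construction in \cite{T2} and using that this isomorphism respects diagonals---that Thomsen's $1$-parameter group fixes the diagonal. Lemma~\ref{lem.diagonalfixing} (applicable since $H\actson X_0$ is free, hence topologically free) then produces the cocycle $\Omega$ abstractly, without ever writing it down.

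Your ``main obstacle'' paragraph correctly anticipates that the diagonality check is the crux, but your proposed resolution via an eigenvalue sequence and cocycle cohomology is not how the paper proceeds and would not go through if attempted literally, since the \cite{T2} dynamics is not of that simple product form. Also, the detour through conformal measures and Lemma~\ref{lemma.cstaralgeba} is unnecessary: once the C$^*$-dynamical systems are isomorphic, the KMS simplices are identified directly at the C$^*$-algebra level.
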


\begin{proof}
Arguing as in the proof of Theorem \ref{thmuhfalg} we see that $C(X_{0}) \rtimes_{r} H$ is naturally isomorphic to the CAR algebra $\mathcal{C}$, such that $C(X_{0})$ is mapped to the diagonal of $\mathcal{C}$. The map
$$
(x_{i})_{i\in \N} \mapsto \left ( (x_{2i})_{i\in \N} \; , \; (x_{2i-1})_{i\in \N} \right )
$$
gives rise to a homeomorphism $X_{0} \to X_{0} \times X_{0}$ and a group isomorphism $H \to H\times H$ which are equivariant. This gives rise to the first isomorphism in
$$
C\big(X_{0}\big) \rtimes_{r} H
\simeq
C\big(X_{0} \times X_{0}\big) \rtimes_{r} \big(H \times H \big)
\simeq
C\big(X_{0} \big) \rtimes_{r} H \otimes C\big(X_{0} \big) \rtimes_{r} H
$$
where the second isomorphism in particular sends $C(X_{0}) \otimes C(X_{0})$ to a dense subalgebra of $C(X_{0} \times X_{0})$. Combined, we get an isomorphism
\begin{equation} \label{eq.CAR-tensor}
\mathcal{C} \otimes  \mathcal{C} \simeq
\mathcal{C}
\end{equation}
which maps the span of elements of the form $x_{1} \otimes x_{2}$ with $x_{1}$ and $x_{2}$ in the diagonal of $\mathcal{C}$ onto a dense subset of the diagonal of $\mathcal{C}$.

We now want to use the results of \cite{T2}, which constructs a $1$-parameter group on $\mathcal{C}$ with an extreme variation in $\beta$-KMS simplices. Since it is important that the $1$-parameter group we construct arises from some continuous $1$-cocycle $\Omega: H \times X_{0}  \to \R$, we will in the following account for the construction of this $1$-parameter group. We will argue that it fixes the diagonal, which by Lemma \ref{lem.diagonalfixing} implies that it arises from a continuous $1$-cocycle. Going through the construction in \cite{T2}, and using that the isomorphism \eqref{eq.CAR-tensor} respects the diagonals, one notices that the $1$-parameter group $\alpha$ constructed in Theorem 5.5 in \cite{T2} indeed fixes the diagonal. The proof of Theorem 1.1 in \cite{T2} now finishes the proof of the Lemma.
\end{proof}

We can now use Lemma \ref{lemma.CAR-variation} to obtain a \emph{proof of Theorem \ref{thm.variation-of-simplex}}.  We construct a $1$-cocycle $\Omega_{2}: H  \times X_{0}  \to \R$ using Lemma \ref{lemma.CAR-variation}, and remark that the set of $\beta$-KMS states for this $1$-parameter group is homeomorphic with the set of $\beta$-conformal measures for all $\beta \in \R$.

For \ref{thmvar1} we consider the action $\Gamma \actson Y$ as in Theorem \ref{thm.all-closed-sets-amenable}. If $K\subset \R$ is a closed set with $0\in K$ then there exists a $1$-cocycle on $\Gamma \times Y$ with KMS spectrum $K$. By Lemma \ref{lemma.prod-measure} we obtain a $1$-cocycle $\Omega$ on $(\Gamma \times H)\times(Y\times X_{0})$ such that there exists $\beta$-conformal measures on $Y\times X_{0}$ if and only if $\beta \in K$, and when $\beta \in K$ the map $\mu \to \nu\times \mu$ is an affine homeomorphism from the $e^{\beta \cdot \Om_{2}}$-conformal measures to the set of $e^{\beta \cdot \Omega}$-conformal measures. Since all these measures are essentially free for the action of $\Gamma \times H$,  \ref{thmvar1} follows from Lemma \ref{lemma.cstaralgeba}.

Proceeding as in the proof of \ref{thmvar1}, but now using the second part of Theorem \ref{thm.all-closed-sets}, we obtain a minimal and free action of  $\F_{\infty} \times  H$ on the Cantor set $Y$ with the following universality property: for every closed subset $K \subset \R$ with $0 \not\in K$, there exists a continuous $1$-cocycle $(\F_{\infty}\times H) \times Y \recht \R$ whose KMS spectrum equals $K$, and for $\beta \in K$ then $S_{\beta}$ is an infinite dimensional Bauer simplex with $S_{\beta}$ not affinely homeomorphic to $S_{\beta'}$ for $\beta'\neq \beta$. Using Lemma \ref{lemma.dummyaction} with $\Gamma = \F_{\infty}$ acting on $Z = \SL(2,\Z_{p})$ and $G=\F_{\infty} \times  H$ then proves the statement in \ref{thmvar2}.\qed

\end{document}